%#MAKEINDEX latexmk -e "$latex=q/uplatex %O -kanji=utf8 -no-guess-input-enc -synctex=1 %S/" -e "$bibtex=q/upbibtex %O %B/" -e "$biber=q/biber %O --bblencoding=utf8 -u -U --output_safechars %B/" -e "$makeindex=q/upmendex %O -o %D %S/" -e "$dvipdf=q/dvipdfmx %O -o %D %S/" -norc -gg -pdfdvi DHK-Para-Kaehler-2021-August-5-Do
\RequirePackage{luatex85}
\documentclass[12pt]{article}
\usepackage{latexsym}
\usepackage[varumlaut]{yfonts}
\usepackage{epsfig}
\usepackage{amsmath}
\usepackage{amssymb}
\usepackage{amsthm}
\usepackage{color}
\usepackage{tikz-cd}
\usepackage{pb-diagram}
\usepackage[all]{xy}

\addtolength{\textheight}{108pt} \addtolength{\topmargin}{-60pt}
\oddsidemargin=-6pt \addtolength{\textwidth}{100pt}
\righthyphenmin=2

\newtheorem{theorem}{Theorem}[section]
\newtheorem{corollary}[theorem]{Corollary}

\newtheorem{prop}[theorem]{Proposition}
\newtheorem{lemma}[theorem]{Lemma}

\theoremstyle{remark}
\newtheorem{remark}[theorem]{Remark}

\theoremstyle{definition}
\newtheorem{definition}[theorem]{Definition}

\DeclareMathOperator\diag{diag}
\DeclareMathOperator\id{id}

\DeclareMathOperator\tr{tr}

\usepackage[normalem]{ulem}

%%%%%%%%%Definitions%%%%%%%%%%%%%%%%
\numberwithin{equation}{section}
\newcommand{\SL}{{\rm SL}_3 \mathbb C}
\newcommand{\SLR}{{\rm SL}_3 \mathbb R^{\rm H}}
\newcommand{\SLRn}{{\rm SL}_{n+1} \mathbb R}
\newcommand{\SLRnm}{{\rm SL}_{n} \mathbb R}
\newcommand{\GLR}{{\rm GL}_3 \mathbb R^{\rm H}}
\newcommand{\SLt}{{\rm SL}_2 \mathbb R^{\rm H}}
\renewcommand{\sl}{\mathfrak{sl}_3 \mathbb C}
\newcommand{\slrStan}{\mathfrak{sl}_3 \mathbb R}
\newcommand{\slr}{\mathfrak{sl}_3 \mathbb R^{\rm H}}
\newcommand{\di}{\operatorname{diag}}
\newcommand{\C}{\mathbb C}
\newcommand{\PC}{{\mathbb{C}^{\prime}}}
\newcommand{\R}{\mathbb R}
\newcommand{\Fi}{\operatorname{Fix}}
\newcommand{\Ad}{\operatorname{Ad}}
\newcommand{\CM}{S^{2n+1}_{n+1}}
\newcommand{\CMf}{S^{5}_{3}}
\newcommand{\piH}{\pi_\mathcal{H}}
\newcommand{\f}{\mathfrak{f}}
\newcommand{\DP}{{\mathbb {C}^{\prime}}\!P^n }
\newcommand{\DPt}{\mathbb {C}^{\prime} P^2 }

\newcommand{\D}{\mathbb D}
\newcommand{\LGr}{\operatorname{L_{Gr}^H}}
\newcommand{\SLGr}{\operatorname{SL_{Gr}^H}}
\newcommand{\SO}{{\rm SO}_{3}^{\rm H}}
\renewcommand{\O}{{\rm O}_{3}^{\rm H}}
\newcommand{\so}{\mathfrak{so}_3^H}
\newcommand{\soStan}{\mathfrak{so}_3}

\newcommand{\Uone}{{\rm SO}_2}
\newcommand{\g}{\mathcal {G}}
\newcommand{\X}{\mathfrak{X}}
\newcommand{\Xt}{\mathcal{X}}
\newcommand{\Y}{\mathfrak{Y}}
\newcommand{\Yt}{\mathcal{Y}}
\newcommand{\ip}{i^{\prime}}
\newcommand{\Ip}{J^{\prime}}
\begin{document}

\title{Half-dimensional immersions into the para-complex projective space
 and Ruh-Vilms type theorems}

\author{Josef F. Dorfmeister \thanks{%
Technical Univ.~M\"unchen,
 Fakult\"at f\"ur Mathematik,  TU-M\"unchen,
 Boltzmannstr.3, D-85747,
 Garching,  Germany
 ({\tt dorfm@ma.tum.de}).
}
\and
Roland Hildebrand \thanks{%
Univ.~Grenoble Alpes, CNRS, Grenoble INP, LJK, 38000 Grenoble, France
({\tt roland.hildebrand@univ-grenoble-alpes.fr}).} \thanks{%
Moscow Institute of Physics and Technology, MMO Laboratory, 117303 Moscow, Russia.}
\and
Shimpei Kobayashi \thanks{%
Univ.~Hokkaido, Department of Mathematics, Hokkaido University,
 Sapporo, 060-0810, Japan
({\tt shimpei@math.sci.hokudai.ac.jp})
 He is partially supported by JSPS KAKENHI Grant Numbers JP18K03265 and
 JP22K03304.}
}

\maketitle

\begin{abstract}
In this paper we study isometric immersions
 $f:M^n \to \DP$ of an $n$-dimensional pseudo-Riemannian manifold $M^n$
 into the $n$-dimensional para-complex projective
space $\DP$. We study the immersion $f$ by means of a lift $\f$ of $f$ into a quadric hypersurface in $\CM$. We find the frame equations and compatibility conditions. We specialize these results to dimension $n = 2$ and a definite metric on
 $M^2$ in isothermal coordinates and consider the special cases of
 Lagrangian surface immersions and minimal surface immersions.
 We characterize surface immersions with special properties
 in terms of primitive harmonicity of the Gauss maps.
\end{abstract}

\section{Introduction}
In this paper  we study isometric immersions $f: M^n \to \DP$ of an $n$-dimensional pseudo-Riemannian manifold $M^n$ into the para-complex projective space $\DP$, more precisely into the open dense subset of non-orthogonal pairs in the product $\mathbb {R}P^n \times \mathbb {R}P_n$, 
where $\mathbb {R}P^n$, $\mathbb {R}P_n$ is the real projective space and its dual, respectively. This target space is a para-K\"ahler space form which has been listed in the classification \cite{KaneyukiKozai} of para-K\"ahler symmetric spaces\footnote{The para-complex projective space $\DP$ appears in the example on pp.~92--93 of  \cite{KaneyukiKozai} with parameters $F = \mathbb R$ and $(p,q) = (1,n)$. In that work this space has been represented as the cotangent bundle of the real projective space $\mathbb {R}P^n$, but we shall work with the representation as the mentioned subset of the product $\mathbb {R}P^n \times \mathbb {R}P_n$, because the latter emphasizes the primal-dual symmetry of the space.},
and the para-K\"ahler structure on $\DP$ has been studied in \cite{Hildebrand11B}.

We consider only immersions, whose tangent spaces are transversal to both eigen-distributions $\Sigma^{\pm}$ of the para-complex structure $\Ip$ of $\DP$. We shall call such immersions \emph{non-degenerate}. It has been shown in \cite[Section 4]{Hildebrand11A} that every non-degenerate immersion defines a dual pair of projectively flat torsion-free affine connections $\nabla,\nabla^*$ on $M^n$, and, vice versa, every such pair of connections on $M^n$ defines locally a non-degenerate immersion into $\DP$ which is unique up to the action of the automorphism group of $\DP$.

 Let us explain the results of this paper, section after section.
 In Section \ref{sec:frame_eq} we consider for a given immersion $f$ the relevant geometric objects on $M^n$, namely a dual pair of torsion-free projectively flat connections $\nabla,\nabla^*$, a non-symmetric Codazzi tensor $h$ of type $(0,2),$ whose symmetric part equals the metric $g$ and whose skew-symmetric part $\omega$ measures the deviation of the immersion from a Lagrangian one, and a cubic form $C_{\alpha\beta\gamma} = \nabla_{\gamma}h_{\alpha\beta}$ which is symmetric in the last two indices. We express these structures in terms of a lift $\f$ of $f$ into a quadric hypersurface $\CM$ in the product $\mathbb R^{n+1} \times \mathbb R_{n+1}$ of the real vector space with its dual. We will finally derive the \textit{Maurer-Cartan} equation for an immersion $f$
 as Theorem  \ref{lem:Maurer_Cartan_general}, which will be effectively used in Sections \ref{sec:surface} and \ref{sc:alg}
 in case of surfaces.
 In Section \ref{sec:minimal} we compute the second fundamental form of the immersion $f$ in terms of the cubic form $C$, see Theorem \ref{thm:minimal}. This allows us to specify our results for the case of minimal immersions.

In Sections \ref{sec:frame_eq} and \ref{sec:minimal} we consider the case of general dimension $n$ and arbitrary signature of the metric on $M^n$, while in Sections \ref{sec:surface} and \ref{sc:alg} we specialize to dimension $n = 2$ and a definite metric on $M^2$. For simplicity we assume all immersions to be smooth. In Section \ref{sec:surface} we study definite immersions from a surface $M^2$ into
$\DPt$. Using this immersion we introduce isothermal coordinates on $M^2$ and compute the objects $\nabla,h,C$ as well as the frame equations and compatibility conditions in these coordinates. As a result we obtain the \textit{fundamental
theorem} of definite surfaces in $\DPt$ in Theorem \ref{thm:FTS}.

Minimal Lagrangian surfaces in $\mathbb {C}P^2$ have been considered in many papers, e.g., \cite{MaMa05,DorfmeisterMa16a,DorfmeisterMa16b,Mironov10,Chang2000}.
 In particular in \cite{DoKoMa19}, minimal Lagrangian or minimal surfaces
 have been characterized by various Gauss maps, the so-called
\textit{Ruh-Vilms type theorems}
 have been obtained. In Section \ref{sc:alg}, we will characterize
 surfaces in $\DPt$ with special properties in terms of \textit{primitive harmonic maps},
 which are
 special harmonic maps into $k$-symmetric space $(k\geq 2)$, see Theorem \ref{equivprimitive}.
 In Section \ref{sc:Ruh-Vilms} we will define various Gauss maps for surfaces in
 $\DPt$ by using various bundles over $\CMf$ and finally derive Ruh-Vilms type theorems, Theorem \ref{Thm:3.6}.

 In Appendix \ref{app:basic}, basic results about $\DP$ and $\CM$ will be discussed.
 In Appendix \ref{app:ksymmetric}, $k$-symmetric spaces and primitive harmonic maps
 will be introduced, and finally in Appendix \ref{app:bundles} various bundles will be
 explained.\footnote{The authors feel that the name given to the objects 
 considered may not fit to what algebraic geometers use. 
 We have chosen nevertheless to use the notation used in several papers preceding ours.}

%%%%%%%%%%%%%%%%%%%%%%%
\section{Half-dimensional immersions into the para-complex projective space
form $\DP$}\label{sec:frame_eq}
In this section we derive the frame equations for non-degenerate $n$-dimensional immersions $f$ of a manifold $M^n$ into
 the para-complex projective space $\DP$. The entries of the corresponding Maurer-Cartan forms are expressed by the components of a projectively flat affine connection $\nabla$ and a non-degenerate non-symmetric tensor $h$ which satisfies a Codazzi equation and is an explicit function of the Ricci tensor of $\nabla$
(Theorem \ref{lem:Maurer_Cartan_general}, Remark \ref{remarkRicci}). The components of $h$ and $\nabla$ will in turn be expressed in terms of a lift $\f$ of $f$,
 which exists at least on simply connected charts, into a quadric hypersurface $\CM$ of a real vector space (Lemmas \ref{lem:P_general} and \ref{prop:auxil_coord}).
 In this and the next section we work with
 arbitrary coordinates on $ M^n$.

 \subsection{Para-complex projective space}
 Consider the para-complex projective space
\begin{equation}\label{eq:DP}
 \DP = \{ ([x],[\chi]) \in \mathbb {R}P^n \times \mathbb {R}P_n \,|\,
\langle x, \chi\rangle>0  \},
\end{equation}
where $\mathbb {R}P^n$ is the $n$-dimensional real projective space,
$\mathbb {R}P_n$ is its dual, and some representatives $x \in \mathbb R^{n+1}$ and $\chi \in \mathbb R_{n+1}$ of $[x]$ and $[\chi]$, respectively, are positive with respect to the natural pairing
 $\langle \,,\, \rangle$ between $\R^{n+1}$ and $\R_{n+1}$. Consider $\DP$ as a fibration over $\mathbb {R}P_n$. We have that $\mathbb {R}P_n$ is connected, and that the fiber over a fixed point of $\mathbb {R}P_n$ is exactly an affine chart in $\mathbb {R}P^n$, which is contractible. Hence we even have that the fundamental group of $\DP$ equals that of $\mathbb {R}P_n$, and $\DP$ is isomorphic to one of the reduced para-complex projective spaces in \cite[Theorem 3.1]{GadeaAmilibia92}. In any affine chart on $\DP$ the para-K\"ahler structure is generated by the para-K\"ahler potential $\log|1+\sum_i[x]^i[\chi]^i|$ \cite[Section 4]{Hildebrand11A}. Let us denote the metric on $\DP$,  the symplectic form, and the para-complex structure by
$g$, $\omega$, and $\Ip$,
respectively. Note that $\Ip$ acts
  for $X= (\mathfrak X, \mathcal X) \in T_{([x], [\chi])} \mathbb {R}P^n \times T_{
 ([x], [\chi])} \mathbb {R}P_n = T_{([x], [\chi])} \DP$
by
\[
 \Ip : (\mathfrak X, \mathcal X)  \mapsto (\mathfrak X,-{\mathcal X})
\]
and we have the relations
\[
 g(X,Y) = \omega(\Ip X,Y), \qquad \omega(X,Y) = g(\Ip X,Y).
\]
 The integrable eigen-distributions of $\Ip$  on $\DP$ are denoted by $\Sigma^{\pm}$, respectively. For a survey on para-K\"ahler spaces see, e.g., \cite{EST06}, for a detailed study of the para-K\"ahler space form $\DP$ see \cite{Hildebrand11B}. In Appendix \ref{app:basic}, we shall discuss the para-K\"ahler structure and
 the basic geometry of $\DP$ in detail.

We shall consider immersions $f: M^n \to \DP$ which are transversal to both distributions $\Sigma^{\pm}$. We shall call such immersions \emph{non-degenerate}. The Levi-Civita connection $\widehat \nabla$ of the metric $g$ can be decomposed into a component
 $\nabla$ tangent to $f$ and a component in the distribution $\Sigma^-$, defining a torsion-free affine connection $\nabla$ on $M^n$. In the same way, a torsion-free affine connection $\nabla^*$ can be defined on $M^n$ by decomposing $\widehat \nabla$ into a component tangent to $f$ and a component in $\Sigma^+$. Both connections $\nabla,\nabla^*$ are projectively flat \cite[Lemma 4.2]{Hildebrand11A}. The pullback of the non-symmetric tensor $g + \omega$ defines a non-degenerate non-symmetric tensor $h$ (the para-hermitian form) of type $(0,2)$ on $M^n$ \cite[Lemma 2.3]{Hildebrand11A}, i.e.,
\[
h(X, Y) = (g +\omega)(f_* X, f_*Y),
\]
for tangent vectors $X, Y$ on $M^n$.
 This tensor satisfies respectively the Codazzi equations  and
the duality relation
 \cite[Theorem 2.1]{Hildebrand11A}
\begin{gather} \label{Codazzi}
(\nabla_Xh)(Y,Z) = (\nabla_Zh)(Y,X),\quad (\nabla^*_Xh)(Y,Z) = (\nabla^*_Zh)(X,Z), \\
 Xh(Y,Z) = h(\nabla_XY,Z) + h(Y,\nabla^*_XZ),\label{duality}
\end{gather}
 for all tangent vectors $X,Y,Z$ on $M^n$. From \eqref{duality}, it is easy to see that
 $\widehat \nabla = (\nabla + \nabla^*)/2$. From these equations it follows that the
 \emph{difference tensor} of type $(1, 2)$
\[
 K = \nabla^* - \nabla = 2 (\widehat \nabla - \nabla)
\]
satisfies the relation
\begin{equation} \label{K_C}
(\nabla_X h)(Z, Y)= h(Z, K (X, Y)),
\end{equation}
 where $K(X, Y) = \nabla^*_X Y - \nabla_X Y$.
 It is convenient to introduce a tensor of type
 $(0, 3)$, the \emph{cubic form}
 $C$ by
 \[
  C = \nabla h,
 \]
 and from the Codazzi equation \eqref{Codazzi},
 $C(X, Y, Z) = \nabla_Z h(X, Y)$ is symmetric in the last two indices.

{\remark \label{remarkRicci}
The tensor $h$ can be obtained from the Ricci tensor $\operatorname{Ric}$ of the connection $\nabla$ by \cite[Lemma 4.3]{Hildebrand11A}
\begin{equation} \label{P_Ricci}
h(X, Y) = \frac{1}{n^2-1}\left\{n\operatorname{Ric}(X, Y) + \operatorname{Ric}
 (Y, X)\right\}.
\end{equation}
 On the other hand, if a manifold $M^n$ is equipped with a projectively flat connection $\nabla$ with tensor $h$ given by \eqref{P_Ricci}, then locally there exists an immersion $f: M^n \to \DP$ such that $\Sigma^-$ is transversal to $f$, the tensor $h$ is the pull-back of $g + \omega$ on $M^n$, and $\nabla$ is the affine connection generated by the transversal distribution $\Sigma^-$
 as above \cite[Theorem 4.3]{Hildebrand11A}.
}

\medskip

%%%%%%%%%%%%%%%%%%%%
\subsection{A natural fibration and horizontal lifts}\label{subsc:fibration}
 We shall denote the local coordinates on $M^n$ by $y=(y^1, \dots, y^n)$.
 The coordinates on $\mathbb R^{n+1}$ shall be denoted by $x=(x^1, \dots, x^{n+1})$,
 the coordinates on the dual space $\mathbb R_{n+1}$ by $\chi=(\chi^1, \dots, \chi^{n+1})$.
 The dual pairing on these vector spaces will be denoted
 by $\langle \cdot,\cdot \rangle$.

 As in similar investigations,
 an immersion  $f: M^n \to \DP$  will be discussed via a lift
 into the total space  $\CM$  of a fibration over $\DP$.
 In this paper we will consider as total space the quadric
\begin{equation} \label{quadric}
\CM= \left\{ (x,\chi) \in \mathbb R^{n+1} \times \mathbb R_{n+1} \;|\; \langle x,\chi \rangle = 1 \right\},
\end{equation}
 and will use the projection map
\begin{equation}
\piH : \CM \rightarrow \DP, \quad (x,\chi) \mapsto ([x], [\chi]).
\end{equation}
 Clearly, the tangent space $T_{(x,\chi)} \CM$ to $\CM$
 can be realized by pairs of vectors, $(\widehat{\X}, \widehat{\Xt}) \in \R^{n+1} \times  \R_{n+1}$
 satisfying $\langle \widehat{\X}, \chi \rangle + \langle x, \widehat{\Xt} \rangle = 0$.
 In order to describe tangent vectors to $\DP$, one needs to introduce an equivalence relation for the pair $(\widehat{\X}, \widehat{\Xt})$,
 since $\widehat\X$ and $\widehat\Xt$ are not uniquely defined by the equation just stated.
 More naturally, one can introduce the uniquely defined horizontal distribution
$\widehat{\mathcal{H}}$
 and the vertical distribution $\widehat{\mathcal{V}}$ defined by
\begin{align} \label{xi_eta_chi_pi_orth}
\widehat{\mathcal{H}}_{(x,\chi)}&= \left\{ (\widehat{\X}, \widehat{\Xt}) \mid
  \langle  \widehat{\mathfrak X}, \chi \rangle = 0 \quad\mbox{and} \quad
  \langle x, \widehat{\Xt} \rangle = 0  \right\}, \\
\widehat{\mathcal{V}}_{(x,\chi)} &= \R (x,-\chi).
\end{align}
The horizontal distribution has the following properties. For $(\widehat{\mathfrak{X}},
 \widehat{\mathcal{X}}) \in T_{(x,\chi)} \CM$ we have
 $(\widehat{\mathfrak{X}},
 \widehat{\mathcal{X}}) \in \widehat{\mathcal{H}}_{(x,\chi)}$ if and only if $
(\widehat{\mathfrak{X}},
 - \widehat{\mathcal{X}}) \in \widehat{\mathcal{H}}_{(x,\chi)}$. Moreover, for
$(\widehat{\X},
 \widehat{\Xt}) \in \widehat{\mathcal{H}}_{(x,\chi)}$ we have
 $d\piH(\widehat{\X},  -\widehat{\Xt}) = \Ip d\piH(\widehat{\X},
 \widehat{\Xt})$. This has the following consequence.
\begin{prop}\label{J_horizontal_proposition}
\mbox{}
\begin{enumerate}
\setlength{\itemsep}{0cm}
\renewcommand{\labelenumi}{(\arabic{enumi})}
 \item[{\rm (1)}]
 The projection $\piH : \CM \rightarrow \DP,  (x,\chi) \mapsto ([x], [\chi])$ is a pseudo-Riemannian submersion,
 the differential of which has as  kernel the distribution $\widehat{\mathcal{V}}$
 and is for all $(x, \chi) \in \CM$ an isomorphism from $\widehat {\mathcal{H}}_{(x, \chi)}$ to
$T_{([x], [\chi])} \DP$.

\item[{\rm (2)}]
Let $X,Y \in T_{([x],[\chi])}\DP$ be arbitrary tangent vectors, and
 let $(\widehat{\mathfrak{X}},  \widehat{\mathcal{X}}),
 (\widehat{\mathfrak{Y}},  \widehat{\mathcal{Y}})\in \widehat{\mathcal{H}}_{(x, \chi)}$ be their pre-images under the map $d\piH$ at $(x, \chi)$. Then
\[ (g+\omega)(X,Y) = \langle \widehat{\mathfrak{X}},\widehat{\mathcal{Y}} \rangle.
\]
\end{enumerate}
\end{prop}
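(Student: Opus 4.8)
The plan is to treat the two assertions separately: first the linear-algebraic content of (1), then a single potential computation that yields all of (2) and, as a by-product, the metric (``pseudo-Riemannian submersion'') part of (1).

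\textbf{Part (1), kernel and isomorphism.} I would first describe the fibre of $\piH$ over $([x],[\chi])$ explicitly: on $\CM$ the normalization $\langle x,\chi\rangle=1$ forces every representative in the fibre to have the form $(\lambda x,\lambda^{-1}\chi)$ with $\lambda\neq0$. Differentiating $\lambda\mapsto(\lambda x,\lambda^{-1}\chi)$ at $\lambda=1$ gives $(x,-\chi)$, so $\ker d\piH=\R(x,-\chi)=\widehat{\mathcal V}_{(x,\chi)}$. Next I verify $\widehat{\mathcal H}_{(x,\chi)}\cap\widehat{\mathcal V}_{(x,\chi)}=0$: if $t(x,-\chi)$ is horizontal then $\langle tx,\chi\rangle=t\langle x,\chi\rangle=t=0$. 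Since $\dim\widehat{\mathcal H}=2n$ (each of $\langle\widehat\X,\chi\rangle=0$ and $\langle x,\widehat\Xt\rangle=0$ is one nontrivial condition, as $\chi,x\neq0$), $\dim\widehat{\mathcal V}=1$, and $\dim T_{(x,\chi)}\CM=2n+1$, the two distributions are complementary; hence $d\piH|_{\widehat{\mathcal H}}$ is injective and, by equality of dimensions, an isomorphism onto $T_{([x],[\chi])}\DP$. Surjectivity can also be seen directly by removing the forbidden components via $\widehat\X\mapsto\widehat\X-\langle\widehat\X,\chi\rangle x$, which is legitimate because $\langle x,\chi\rangle=1$.

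\textbf{Part (2).} Here lies the main work. I would compute $g+\omega$ from the para-Kähler potential $K=\log|1+\sum_i[x]^i[\chi]^i|$ recorded above. As $\Ip$ acts as $+1$ on the $\mathbb{R}P^n$-factor and $-1$ on the $\mathbb{R}P_n$-factor, the affine coordinates $[x]^i$ and $[\chi]^j$ are adapted to $\Sigma^+$ and $\Sigma^-$, and the para-Kähler recipe gives $g+\omega=\sum_{i,j}\bigl(\partial_{[x]^i}\partial_{[\chi]^j}K\bigr)\,d[x]^i\otimes d[\chi]^j$, whose symmetric and skew parts are $g$ and $\omega$. With $u:=1+\sum_k[x]^k[\chi]^k$ one gets $\partial_{[x]^i}\partial_{[\chi]^j}K=\delta_{ij}u^{-1}-[x]^j[\chi]^iu^{-2}$. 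Then for $X,Y\in T_{([x],[\chi])}\DP$ one evaluates $(g+\omega)(X,Y)=\sum_{i,j}M_{ij}\,d[x]^i(X)\,d[\chi]^j(Y)$, so that only the $\widehat\X$-part of the lift of $X$ and the $\widehat\Yt$-part of the lift of $Y$ enter, which is already why the answer should be the cross-pairing $\langle\widehat\X,\widehat\Yt\rangle$. Using the $\lambda$-scaling freedom in the fibre I normalize the representative by $x^{n+1}=1$, so $u=\chi_{n+1}^{-1}$, and write $d[x]^i(\widehat\X)=\widehat\X^i-[x]^i\widehat\X^{n+1}$, $d[\chi]^j(\widehat\Yt)=\chi_{n+1}^{-1}(\widehat\Yt_j-[\chi]^j\widehat\Yt_{n+1})$.

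The mechanism is that the horizontal conditions make the correction terms collapse: $\langle\widehat\X,\chi\rangle=0$ gives $\sum_i[\chi]^i\widehat\X^i=-\widehat\X^{n+1}$ and $\langle x,\widehat\Yt\rangle=0$ gives $\sum_j[x]^j\widehat\Yt_j=-\widehat\Yt_{n+1}$, while $\sum_k[x]^k[\chi]^k=u-1$. Feeding these into the $u^{-1}$- and $u^{-2}$-parts of $(g+\omega)(X,Y)$, the contributions of the $(n+1)$-st components cancel precisely, leaving $\sum_{k=1}^{n+1}\widehat\X^k\widehat\Yt_k=\langle\widehat\X,\widehat\Yt\rangle$. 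I expect this cancellation, together with the bookkeeping between homogeneous and affine coordinates, to be the only delicate point. The result is manifestly chart-independent, and it is independent of the chosen fibre point because the rescaling $(\widehat\X,\widehat\Yt)\mapsto(\lambda\widehat\X,\lambda^{-1}\widehat\Yt)$ leaves $\langle\widehat\X,\widehat\Yt\rangle$ invariant; this invariance both justifies the normalization $x^{n+1}=1$ and shows the formula in (2) is well posed.

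\textbf{Back to (1), the submersion property.} Finally I record that (2) already delivers this. Taking the symmetric part gives $g(X,Y)=\tfrac12\bigl(\langle\widehat\X,\widehat\Yt\rangle+\langle\widehat\Y,\widehat\Xt\rangle\bigr)$, so if $\CM$ is equipped with the metric induced by the symmetric ambient form $\tfrac12\bigl(\langle\widehat\X,\widehat\Yt\rangle+\langle\widehat\Y,\widehat\Xt\rangle\bigr)$, then $d\piH|_{\widehat{\mathcal H}}$ is an isometry onto $(T\DP,g)$. It remains to check that $\widehat{\mathcal H}$ is the orthogonal complement of $\widehat{\mathcal V}$ inside $T\CM$: a tangent vector $(\widehat\X,\widehat\Xt)$ is orthogonal to $(x,-\chi)$ iff $-\langle\widehat\X,\chi\rangle+\langle x,\widehat\Xt\rangle=0$, which combined with the tangency relation $\langle\widehat\X,\chi\rangle+\langle x,\widehat\Xt\rangle=0$ forces both pairings to vanish, i.e.\ $(\widehat\X,\widehat\Xt)\in\widehat{\mathcal H}$. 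Since the vertical line is nondegenerate ($\langle x,\chi\rangle=1\neq0$), this is the required complement, and the pseudo-Riemannian submersion statement follows.
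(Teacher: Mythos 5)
Your proposal is correct, but it proves the substantive part (2) by a genuinely different route than the paper. The paper works upstairs and coordinate-free: in its framework $g$ and $\omega$ on $\DP$ are \emph{defined} as the structures pushed down through $\piH$ from $\hat g=\Re\langle\cdot,\cdot\rangle_h$ and $\hat\omega=-\Im\langle\cdot,\cdot\rangle_h$ on $\CM$ (Theorem \ref{thm:Basics} (2),(4)), so its proof of (2) is nothing more than adding the two real-coordinate formulas \eqref{eq:hatg} and \eqref{eq:omegahat},
\[
\tfrac12\bigl(\langle\widehat{\X},\widehat{\Yt}\rangle+\langle\widehat{\Y},\widehat{\Xt}\rangle\bigr)
+\tfrac12\bigl(\langle\widehat{\X},\widehat{\Yt}\rangle-\langle\widehat{\Y},\widehat{\Xt}\rangle\bigr)
=\langle\widehat{\X},\widehat{\Yt}\rangle,
\]
while (1) is quoted verbatim from Proposition \ref{prop:A3}. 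You instead work downstairs: you take $g+\omega$ to be generated by the para-K\"ahler potential in an affine chart and verify the identity by explicit horizontal-lift bookkeeping. Your cancellation is correct (I checked: with $x^{n+1}=1$ one has $u=\chi_{n+1}^{-1}$, the $u^{-2}$-term contributes exactly $-\chi_{n+1}^{-1}\widehat{\X}^{n+1}\widehat{\Yt}_{n+1}$, which absorbs the surplus of the $u^{-1}$-term), and recovering the isometry half of (1) by symmetrizing (2) is a legitimate economy. What your route buys is a self-contained check that the affine-chart potential description quoted in Section 2 actually satisfies the stated formula --- a compatibility the paper effectively delegates to \cite{Hildebrand11A} by taking the pushed-down forms as the definition of $(g,\omega)$. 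What it costs is the coordinate bookkeeping and a normalization convention: your recipe $g+\omega=\sum_{i,j}\partial_{[x]^i}\partial_{[\chi]^j}K\,d[x]^i\otimes d[\chi]^j$ carries no extra factor, and under a different convention (e.g.\ an overall factor of $2$) the final formula would acquire a constant; since the proposition's formula is precisely what pins down this normalization, you should state explicitly that this is the convention of \cite{Hildebrand11A}. One small ordering point in your part (1): injectivity of $d\piH|_{\widehat{\mathcal H}}$ does not follow from $\widehat{\mathcal H}\cap\widehat{\mathcal V}=\{0\}$ alone until you know $\ker d\piH=\widehat{\mathcal V}$ exactly, which itself needs surjectivity of $d\piH$; the clean order is your explicit horizontal-projection surjectivity argument first, then injectivity from $\dim\widehat{\mathcal H}=2n=\dim\DP$, and finally $\ker d\piH=\widehat{\mathcal V}$ by dimension count.
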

 The proof will be given in Appendix \ref{subsub:induced}.

 Let $f: M^n \to \DP$ be an immersion and assume that $f$ has a lift $\f : M^n
 \to \CM$. Here $\f$ is defined by the property that $(x, \chi) = \f(y)$
  projects to $([x],[\chi]) = f(y)$.
 It is easy to see that a  lift is unique up to ``scalings''  of
 the form
 \begin{equation} \label{scaling}
 (x,\chi) \mapsto (\alpha x, \alpha^{-1}\chi)
 \end{equation}
 for never vanishing scalar functions $\alpha$.
 In general, a given $f: M^n \to \DP$  can not be lifted, see
 Proposition \ref{prp:nonlift}.

For later purposes we decompose the tangent map of a lift in more detail:
Let $\f$ be a lift of some immersion $f: M^n \to \DP$ and
write $\f(y) = (x(y), \chi(y))$.
Then the differential of $\f$ can be decomposed in $T_{(x,\chi)} \CM$  in the form
\begin{equation} \label{dectildef}
d_y\f (Z) = (\xi(Z), \eta(Z)) + (\langle d_yx(Z), \chi \rangle x,  \langle x, d_y\chi (Z) \rangle \chi),
\end{equation}
where the first term is the horizontal component and the second term is the vertical component of $d_y\f (Z)$. Moreover, we have
\begin{equation} \label{horizontal_forms}
\xi(Z) = d_yx(Z) - \langle d_yx(Z), \chi \rangle x, \hspace{2mm} \mbox{and} \hspace{2mm}
\eta(Z) = d_y\chi (Z) - \langle x, d_y \chi(Z) \rangle \chi.
\end{equation}
 For convenience we also introduce the 1-form
\begin{equation} \label{psi_def}
\psi (Z) = \langle d_yx (Z),\chi \rangle = -\langle x,d_y\chi (Z) \rangle.
\end{equation}

It is clear that using $\psi$ one can write the vertical component of $d \f(Z)$ as
\[
\psi (Z)( x, -\chi).
    \]
If $\f_\alpha (y) = (\alpha x, \alpha^{-1} \chi)$ is another lift of $f$, then we obtain
\begin{equation} \label{horizontalalpha}
(\xi_\alpha (Z),  \eta_\alpha  (Z)) =(\alpha \xi(Z)  ,\alpha^{-1} \eta (Z))
\end{equation}
for its horizontal component, and for its vertical component we derive
\begin{equation} \label{verticalalpha}
\left(\langle d_y ( \alpha x)(Z), \alpha^{-1} \chi \rangle \alpha x,  \;
 \langle \alpha x,d_y (\alpha^{-1}\chi) (Z) \rangle \alpha^{-1}\chi\right)
=(\psi(Z) + d_y \log|\alpha|) (\alpha x, -\alpha^{-1} \chi).
\end{equation}
Hence under a scaling \eqref{scaling} the form $\psi$ transforms as $\psi \mapsto \psi + d\log|\alpha|$. We may thus add arbitrary differentials to $\psi$ by changing the lift $\f$.

As mentioned above, as a next step and as in similar geometric situations,  one wants to choose not only some lift, but preferably some ``horizontal lift''  for a given immersion $f: M^n \to \DP$.

\begin{definition} \label{def:horizontal}
Let  $f: M^n \to \DP$  be an immersion and $\f: M^n \rightarrow \CM$ be a lift of $f$.
Then $\f$ is called  a ``horizontal lift'' iff the tangent map takes values in the horizontal distribution $\widehat {\mathcal{H}}$.
In other words, a lift is horizontal, iff the vertical component of its differential vanishes identically,
i.e., $\psi = 0$.
\end{definition}

\begin{prop} \label{prop:horizontal}
Assume  $f: M^n \to \DP$  is liftable with lift $\f: M^n \rightarrow \CM$,
$\f(y) = (x, \chi)$. Then there exists a never vanishing scalar function $\alpha$ such that
$\f_\alpha$ is a
horizontal lift of $f: M^n \to \DP$ if and only if the equation
\begin{equation}\label{eq:horizontalcond}
 \alpha^{-1} d \alpha  = - \psi
\end{equation}
has a global solution on $M^n$.
\end{prop}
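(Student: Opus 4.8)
The plan is to read the claim directly off the transformation behavior of the one-form $\psi$ under scalings, which was already recorded in \eqref{verticalalpha}, combined with the characterization of horizontality in Definition \ref{def:horizontal}. The only point requiring genuine care is matching the phrase ``never vanishing scalar function $\alpha$'' with the phrase ``global solution of $\alpha^{-1}d\alpha=-\psi$,'' and in particular the passage between $d\log|\alpha|$ and $\alpha^{-1}d\alpha$.

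First I would recall that every competing lift is of the form $\f_\alpha(y)=(\alpha x,\alpha^{-1}\chi)$ with $\alpha$ never vanishing, cf.\ \eqref{scaling}, and that by \eqref{verticalalpha} its associated one-form transforms as
\[
\psi_\alpha=\psi+d\log|\alpha|.
\]
By Definition \ref{def:horizontal}, the lift $\f_\alpha$ is horizontal if and only if its $\psi$-form vanishes identically, i.e.\ if and only if $d\log|\alpha|=-\psi$. Next I would use the pointwise identity $d\log|\alpha|=\alpha^{-1}d\alpha$, valid for any never vanishing real $\alpha$ since $\tfrac{d}{dt}\log|t|=t^{-1}$. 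Hence the horizontality condition $d\log|\alpha|=-\psi$ is literally the equation \eqref{eq:horizontalcond}, and this already gives one implication: if some $\f_\alpha$ is horizontal, then the corresponding never vanishing $\alpha$ is a global solution of $\alpha^{-1}d\alpha=-\psi$.

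For the converse I would note that any global solution $\alpha$ of $\alpha^{-1}d\alpha=-\psi$ is, by the very form of the equation, never vanishing, so $\f_\alpha$ is a well-defined lift; reversing the chain of equivalences above then shows $\psi_\alpha\equiv 0$, i.e.\ $\f_\alpha$ is horizontal. The statement is therefore a direct translation of \eqref{verticalalpha} and Definition \ref{def:horizontal}. I expect the main (mild) obstacle to be purely bookkeeping: handling the absolute value and the sign of $\alpha$ correctly, which is resolved by writing $\alpha=\pm e^{u}$ on each connected component with $u=\log|\alpha|$ and $du=\alpha^{-1}d\alpha$. No integrability or existence question enters at this stage, since the proposition asserts only the equivalence and not the solvability of \eqref{eq:horizontalcond} itself (the latter, requiring $\psi$ to be closed and suitably exact, is a separate matter).
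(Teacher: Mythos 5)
Your proposal is correct and follows essentially the same route as the paper, whose proof is a one-line remark that horizontality is equivalent to \eqref{eq:horizontalcond} in view of \eqref{verticalalpha}; you simply spell out the details the paper leaves implicit (the transformation $\psi\mapsto\psi+d\log|\alpha|$, the identity $d\log|\alpha|=\alpha^{-1}d\alpha$, and the fact that any global solution of \eqref{eq:horizontalcond} is automatically never vanishing). No gap, and no difference in method.
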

\begin{proof}
It is easy to verify that in view of equation (\ref{verticalalpha}) the condition on the lift of being horizontal is equivalent to \eqref{eq:horizontalcond}.
\end {proof}

%%%%%%%%%%%%%%%%%%%%%%%%%%%%%%%%%%%%%%%%%%%%%%%%%%%%%%%%

\medskip

%%%%%%%%%%%%%%%%%%%%%%%%%%%%%%

\subsection{Non-symmetric Codazzi tensor, connection, frame equations}
We shall now pass to the main goal of this section and compute the non-symmetric tensor $h$ on $M^n$ in terms of the forms $\xi,\eta$.
%%%%%%%%%%%%%%%%%%%%%%%%%%%%%%%%%%%%

{\lemma \label{lem:P_general} Let the non-degenerate immersion $f: M^n \to \DP$ be given by means of a lift $\f: M^n \to \CM$ into the quadric \eqref{quadric}. Define the $\mathbb R^{n+1}$- and $\mathbb R_{n+1}$-valued $1$-forms $\xi,\eta$ on $M^n$ by \eqref{horizontal_forms}. Then the pull-back $h$ of the tensor $g + \omega$ from $\DP$ to $M^n$ is given by
\begin{equation} \label{P_formula}
 h(X, Y) = \langle \xi(X),\eta(Y) \rangle
 \end{equation}
 for all tangent vectors $X, Y$ on $M^n$.}
\begin{proof}
The lemma is an immediate consequence of Proposition \ref{J_horizontal_proposition}.
\end{proof}
%%%%%%%%%%%%%%%%%%%%%%%%%%%%%%%%%%%%%%%%%%%%%%%%%
\begin{prop}\label{prop:auxil_coord}
 Retain the assumptions of {\rm Lemma \ref{lem:P_general}}, and define the $1$-form $\psi$ by \eqref{psi_def}. Then for all tangent vectors $X$ and $Y$ we have
\begin{equation}\label{eq:xietanabla}
\left\{
 \begin{array}{l}
\xi(\nabla_XY) = X(\xi(Y)) - \psi(X) \xi(Y)
 +\langle \xi(Y), \eta(X)\rangle x, \\[0.1cm]
\eta(\nabla^*_XY) = X(\eta(Y)) + \psi(X) \eta(Y)  +\langle \xi(X), \eta(Y)\rangle \chi.
 \end{array}
\right.
\end{equation}
\end{prop}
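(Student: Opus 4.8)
The plan is to identify both sides of the first identity geometrically and then reduce the claim to a computation with the flat connection on the ambient space $V=\R^{n+1}\times\R_{n+1}$. I would begin by recording two structural reductions. First, both sides transform covariantly under the scaling \eqref{scaling}: using \eqref{horizontalalpha} and \eqref{verticalalpha}, the replacement $\f\mapsto\f_\alpha$ multiplies the right-hand side by $\alpha$, because the term $d\log|\alpha|(X)\,\xi(Y)$ produced by differentiating $\alpha\,\xi(Y)$ cancels the corresponding term in $(\psi+d\log|\alpha|)(X)\,\alpha\,\xi(Y)$, while the left-hand side is multiplied by $\alpha$ since $\nabla$ is intrinsic to $M^n$. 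Thus it suffices to verify the identity for one convenient lift, and in particular one may arrange $\psi=0$ at the point under consideration, which removes the middle term there. Second, since $h$ is non-degenerate, Lemma \ref{lem:P_general} shows that $\xi$ is injective with image the horizontal hyperplane $\{v\in\R^{n+1}:\langle v,\chi\rangle=0\}$, so $\xi(\nabla_XY)$ is pinned down by its value and lies in that hyperplane.

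The heart of the argument is the geometric identification of $\xi(\nabla_XY)$. Recall that $\nabla$ arises by decomposing $\widehat\nabla_{f_*X}f_*Y$ into a part tangent to $f$ and a part in $\Sigma^-$, and that $\Sigma^-$ is the eigen-distribution of $\Ip$ lying in the dual factor $T\mathbb{R}P_n$; under the horizontal lift of Proposition \ref{J_horizontal_proposition} a $\Sigma^-$-vector has vanishing $\R^{n+1}$-component. Consequently the $x$-component of the horizontal lift of $\widehat\nabla_{f_*X}f_*Y$ is exactly $\xi(\nabla_XY)$. It then remains to compute this $x$-component by relating the flat ambient derivative to $\widehat\nabla$: differentiating the relation $d_yx(Y)=\xi(Y)+\psi(Y)x$ coming from \eqref{horizontal_forms} and \eqref{psi_def} in the direction $X$ expresses $X(\xi(Y))$ through the ambient Hessian of $x$ and lower-order terms in $\psi$, and decomposing this Hessian into a part tangent to the lift, a vertical part along $\widehat{\mathcal V}$, and a part normal to $\CM$ in $V$ should recover $\xi(\nabla_XY)$ together with a transversal multiple of $x$; by Proposition \ref{J_horizontal_proposition}(2) and Lemma \ref{lem:P_general} that multiple is $h(Y,X)$. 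As a concrete consistency check, differentiating $\langle\xi(Y),\chi\rangle\equiv0$ and using $d_y\chi(X)=\eta(X)-\psi(X)\chi$ gives $\langle X(\xi(Y)),\chi\rangle=-h(Y,X)$, so the right-hand side $X(\xi(Y))-\psi(X)\xi(Y)+h(Y,X)x$ does lie in the horizontal hyperplane, exactly as $\xi(\nabla_XY)$ must.

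The step I expect to be the main obstacle is showing that, in passing from the flat ambient derivative $X(\xi(Y))$ to the Levi-Civita quantity $\widehat\nabla_{f_*X}f_*Y$, the several correction terms, namely the second fundamental form of the quadric $\CM$ inside the flat space $V$, the vertical $\widehat{\mathcal V}$-component, and the O'Neill correction for the submersion $\piH$, combine to contribute precisely the single multiple $h(Y,X)x$ and nothing else in the horizontal hyperplane; this is where Proposition \ref{J_horizontal_proposition} must be used quantitatively rather than merely qualitatively. Once the first identity is established, the second follows from the primal–dual symmetry of the construction: interchanging the factors $\R^{n+1}\leftrightarrow\R_{n+1}$ exchanges $x\leftrightarrow\chi$, $\xi\leftrightarrow\eta$ and $\Sigma^+\leftrightarrow\Sigma^-$, hence $\nabla\leftrightarrow\nabla^*$, while \eqref{psi_def} shows that $\psi$ changes sign, which accounts for the opposite sign of the $\psi(X)\eta(Y)$ term and the replacement of $h(Y,X)$ by $h(X,Y)$.
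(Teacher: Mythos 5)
Your proposal is a genuinely different proof from the one in the paper, and its outline is sound, although its decisive step is left as a conjecture (``should recover'', ``the main obstacle''), so let me address both points. The paper argues intrinsically and algebraically: it rewrites the claim as the computation of $(\nabla_X\xi)(Y)=X(\xi(Y))-\xi(\nabla_XY)$, posits the decomposition $(\nabla_X\xi)(Y)=a(X)\,\xi(Y)+b(X,Y)\,x$ with a $1$-form $a$ and a bilinear map $b$, finds $b(X,Y)=-\langle\xi(Y),\eta(X)\rangle$ by pairing with $\chi$, and finds $a=\psi$ by antisymmetrizing in $X,Y$ and using torsion-freeness of $\nabla$ together with the commutation of the flat partial derivatives of $x$; the dual identity is then obtained ``similarly'', i.e.\ by the same primal--dual exchange you describe, including the sign change of $\psi$. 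You instead identify $\xi(\nabla_XY)$ through the submersion: since horizontal lifts of $\Sigma^-$-vectors have vanishing $\R^{n+1}$-component, $\xi(\nabla_XY)$ is the primal component of the horizontal lift of $\widehat\nabla_{f_*X}f_*Y$, which you propose to compute from the flat ambient derivative along the lift $\f$.

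Your flagged obstacle does resolve, so the plan is viable. With your normalization $\psi=0$ at the chosen point, $d\f(X)$ is horizontal there, and three facts finish the computation: (i) by O'Neill's formula, the discrepancy between the Levi-Civita derivative on $\CM$ of horizontal lifts and the horizontal lift of $\widehat\nabla_{f_*X}f_*Y$ is purely vertical; (ii) in the flat ambient metric the normal space of $\CM$ is spanned by $(x,\chi)$ and the vertical space by $(x,-\chi)$, so both of these corrections have primal component along $x$; (iii) hence the primal component of the horizontal lift of $\widehat\nabla_{f_*X}f_*Y$ is $X(\xi(Y))+\mu x$, and the pairing $\langle X(\xi(Y)),\chi\rangle=-\langle\xi(Y),\eta(X)\rangle$ --- which you present only as a consistency check --- is exactly what determines $\mu=\langle\xi(Y),\eta(X)\rangle$. (Without the normalization, differentiating the horizontal lift of $f_*Y$ in the vertical direction $(x,-\chi)$ gives $(\xi(Y),-\eta(Y))$, and this is what produces the remaining term $-\psi(X)\xi(Y)$.) As for what each approach buys: yours uses only the definition of $\nabla$ by the transversal distribution $\Sigma^-$ plus standard submersion identities, and in doing so it actually proves the structural fact that the horizontal component of $(\nabla_X\xi)(Y)$ is a multiple of $\xi(Y)$ --- a fact the paper's ansatz assumes rather than derives (a priori that component could be $\xi(S(X,Y))$ for a general $(1,2)$-tensor $S$; torsion-freeness only forces the correction $S(X,Y)-\psi(X)Y$ to be symmetric in $X,Y$, not to vanish). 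The paper's proof is much shorter and needs no submersion machinery, at the price of positing that decomposition; your route, once the bookkeeping in (i)--(iii) is written out, is the more self-contained one.
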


\begin{proof}
First note that we can rephrase the left-hand side and the first term in
 the right-hand side in \eqref{eq:xietanabla} together as
 \[
(\nabla_X  \xi)(Y) =  X(\xi(Y))- \xi(\nabla_XY) , \quad
(\nabla^*_X  \eta)(Y) = X(\eta(Y)) - \eta (\nabla^*_XY).
\]
 Then using decompositions of the horizontal part and the vertical part of
 $\nabla \xi$ and $\nabla^*  \eta$, we can set
\begin{equation}\label{eq:xietaundetermined}
 (\nabla_X  \xi)(Y)  = a (X) \xi (Y) + b(X, Y) x, \quad
 (\nabla^*_X  \eta)(Y)  = c (X) \eta (Y) + d(X, Y) \chi,
\end{equation}
 where $a$ and $c$ are $1$-forms and $b$ and $d$ are bi-linear maps.
 Taking the inner product of $\chi$ in the first formula and $x$ in
 the second formula, we have
\[
 b(X, Y) = - \langle \xi(Y), \eta(X)\rangle, \quad  d(X, Y)
 = - \langle \xi(X), \eta(Y)\rangle.
\]
 Here we use the relations
  $\langle X \xi(Y), \chi\rangle = - \langle \xi(Y), \eta(X) \rangle$
 and   $\langle X \eta (Y), x\rangle = - \langle \eta(Y), \xi(X) \rangle$,
 respectively, since $\xi$ and $\eta$ are horizontal vectors.
 We now compute $a$. Interchanging $X$ and $Y$ in the first equation of
 \eqref{eq:xietaundetermined}, and subtracting it from the original equation
 we have
\[
 X \xi(Y) - Y \xi(X) - \xi (\nabla_X Y - \nabla_Y X)
 = a(X) \xi(Y) - a(Y) \xi(X) +v(X, Y) x,
\]
 where we set $v (X, Y) = - \langle \xi(Y),\eta (X) \rangle
 + \langle \xi(X), \eta (Y)\rangle$.
 The left-hand side of the above equation can be simplified by
 using the torsion-freeness of $\nabla$ and the definition of
 $\psi$ as
\begin{align*}
X &\xi(Y) - Y \xi(X) - \xi (\nabla_X Y - \nabla_Y X)\\
&= X d_yx(Y) -X( \psi(Y) x ) - Y d_yx(X) +Y( \psi(X) x)
- \xi ([X, Y])\\
&= - \psi(Y) \xi (X) + \psi(X) \xi(Y) +p(X, Y)x,
\end{align*}
where $p(X, Y) x$ denotes the vertical part. Taking the inner product with $\chi$,
$p(X, Y) = v(X, Y)$ follows.
 Therefore we conclude
\[
( a(Y)- \psi(Y)) \xi (X) + (\psi(X) -a (X))  \xi (Y)=0.
\]
 Since the above equation holds for any vector fields $X$ and $Y$, we have
 $a = \psi$. We can compute $c$ similarly. This completes the proof.
\end{proof}

We now rewrite the above relation in \eqref{eq:xietanabla}
 by local expression.
 We first abbreviate the partial derivative by
\begin{equation}\label{eq:abbreviate}
  \partial_{\alpha} = \frac{\partial}{\partial y^{\alpha}},
\end{equation}
 where $y^1, \dots, y^n$ are local coordinates on $M$.

We now use the local expression of the tensor $h$, i.e., $h_{\alpha \beta}
 =h(\partial_\alpha, \partial_\beta)$.
Denote by $h^{\alpha\beta}$ the inverse tensor, i.e., such that $h^{\alpha\beta}h_{\beta\gamma} = h_{\gamma\beta}h^{\beta\alpha} = \delta^{\alpha}_{\gamma}$.
Here we use the Einstein summation convention.
Moreover, we will use the notation
 \[
\quad (\xi_{\alpha}, \eta_{\alpha}) = (\xi(\partial_{\alpha}),
\eta(\partial_{\alpha}))\quad \mbox{and}\quad
\psi_{\alpha} = \psi(\partial_{\alpha}),
 \]
 where $\partial_1, \dots, \partial_n$ are vector fields defined in \eqref{eq:abbreviate}
 and $\xi$ is the horizontal component of a lift $\f(y)= (x(y), \chi(y))$
 defined in \eqref{horizontal_forms}.

 As a consequence of \eqref{P_formula} we obtain by virtue of \eqref{xi_eta_chi_pi_orth} that
\begin{align} \label{formula1}
 \langle \partial_{\alpha} x, \eta_{\beta} \rangle
 &= \langle \xi_{\alpha} + \langle \partial_{\alpha}x,\chi \rangle  x,
 \eta_{\beta} \rangle = h_{\alpha\beta}, \\
\label{formula2}
\langle \partial_{\alpha} \xi_{\beta}, \chi \rangle
 &= -\langle \xi_{\beta},\partial_{\alpha}\chi \rangle
  = -\langle \xi_{\beta},\eta_{\alpha}
   + \langle x, \partial_{\alpha}\chi \rangle \chi \rangle = -h_{\beta\alpha}.
\end{align}

{\corollary \label{coro:nabla_general} Retain the assumptions in
{\rm Proposition \ref{prop:auxil_coord}}.  Then the affine connection $\nabla$ defined on $M^n$ by the transversal distribution $\Sigma^-$ and the dual affine connection $\nabla^*$ defined on $M^n$ by the transversal distribution $\Sigma^+$ are given by
\begin{align} \label{nabla_formula}
\nabla^{\alpha}_{\beta\gamma} &= - \psi_{\gamma}\delta^{\alpha}_{\beta} + h^{\delta\alpha}\langle\partial_{\gamma}\xi_{\beta},\eta_{\delta} \rangle, \\ \label{bar_nabla_formula}
\nabla^{*\alpha}_{\beta\gamma}  &= \psi_{\gamma}\delta^{\alpha}_{\beta} + h^{\alpha\delta}\langle \xi_{\delta},\partial_{\gamma}\eta_{\beta} \rangle.
\intertext{For the difference tensor we obtain}
 K^{\alpha}_{\beta\gamma} &= 2\psi_{\gamma}\delta^{\alpha}_{\beta} + h^{\alpha\delta}\langle \xi_{\delta},\partial_{\gamma}\eta_{\beta}\rangle - h^{\delta\alpha}\langle \partial_{\gamma}\xi_{\beta},\eta_{\delta} \rangle.
\end{align}
}

\begin{proof}
From \eqref{dectildef} we obtain
\begin{equation} \label{dMuX}
\partial_{\mu}x = \xi_{\mu} + \psi_{\mu}x, \quad \partial_{\mu}\chi = \eta_{\mu} - \psi_{\mu}\chi.
\end{equation}
The first relation in \eqref{eq:xietanabla} can be written as
\[ \xi_{\mu}\nabla^{\mu}_{\beta\gamma} = \partial_{\gamma}\xi_{\beta} - \psi_{\gamma}\xi_{\beta} + \langle \xi_{\beta},\eta_{\gamma} \rangle x.
\]
Combining, we obtain
\[ (\partial_{\mu}x - \psi_{\mu}x)\nabla^{\mu}_{\beta\gamma} = \partial_{\gamma}\xi_{\beta} - \psi_{\gamma}(\partial_{\beta}x - \psi_{\beta}x) + \langle \xi_{\beta},\eta_{\gamma} \rangle x,
\]
which yields
\[ \nabla^{\mu}_{\beta\gamma}\partial_{\mu}x = - \psi_{\gamma}\partial_{\beta}x + \partial_{\gamma}\xi_{\beta} + (\nabla^{\mu}_{\beta\gamma}\psi_{\mu} + \psi_{\gamma}\psi_{\beta} + \langle \xi_{\beta},\eta_{\gamma} \rangle)x.
\]
Taking the scalar product with $\eta_{\delta}$ we obtain by virtue of \eqref{formula1} that
\[  \nabla^{\mu}_{\beta\gamma}h_{\mu\delta} = -\psi_{\gamma}h_{\beta\delta} + \langle \partial_{\gamma}\xi_{\beta},\eta_{\delta} \rangle.
\]
Multiplying by $h^{\delta\alpha}$ we get relation \eqref{nabla_formula}.

Relation \eqref{bar_nabla_formula} is obtained similarly. The expression for the difference tensor readily follows.
\end{proof}

{\corollary \label{cor:C} The cubic form is given by
\[ C_{\alpha\beta\gamma} = \partial_{\gamma}h_{\alpha\beta} + 2\psi_{\gamma}h_{\alpha\beta} - \langle \partial_{\gamma}\xi_{\alpha},\eta_{\beta} \rangle - \langle \partial_{\gamma}\xi_{\beta},\eta_{\alpha} \rangle.
\] }
\begin{proof}
The proof is straightforward by using \eqref{nabla_formula}.
\end{proof}

We now go on to deduce the frame equations. We shall define the primal and dual frames as
\[
F =(x,\xi_1,\dots,\xi_n)\quad\mbox{and}\quad F^* = (\chi,\eta_1,\dots,\eta_n), 		
\]
respectively.
Note that under a scaling $(x,\chi) \mapsto (\alpha x,\alpha^{-1}\chi)$ of the lift $\f$ the frames transform as $F \mapsto \alpha F$, $F^* \mapsto \alpha^{-1}F^*$. Therefore the trace-less parts of the Maurer-Cartan forms are invariant under changes of the lift. We have $(F^*)^TF = \diag\left(1,h^T\right)$ and therefore $F^{-1} = \diag\left(1,h^{-T}\right)(F^*)^T$. This yields
\[
 U_{\alpha} := F^{-1}\partial_{\alpha} F = \diag\left(1,h^{-T}\right)\begin{pmatrix} \chi^T \\ \eta^T \end{pmatrix}\begin{pmatrix} \partial_{\alpha}x &
 \partial_{\alpha}\xi \end{pmatrix}
\]
for the Maurer-Cartan form, where $\xi =(\xi_1, \dots, \xi_n)$, $\eta =(\eta_1, \dots, \eta_n)$. Using \eqref{nabla_formula}, \eqref{bar_nabla_formula} and \eqref{formula1}, \eqref{formula2}, \eqref{dMuX} we obtain explicit expressions for the components of $U$. In a similar way we obtain the components of the dual Maurer-Cartan form $U^*_{\alpha} := (F^*)^{-1}\partial_{\alpha} F^*$. Let us list these expressions in the following theorem.

{\theorem \label{lem:Maurer_Cartan_general} Let the non-degenerate immersion $f: M^n \to \DP$ be given by means of a lift $\f: M^n \to \CM$ into the quadric \eqref{quadric}. Define the $\mathbb R^{n+1}$- and $\mathbb R_{n+1}$-valued $1$-forms $\xi,\eta$ on $M$ by \eqref{horizontal_forms}, and the $1$-form $\psi$ by \eqref{psi_def}. Let the affine connection $\nabla$ on $M^n$ be defined by the transversal distribution $\Sigma^-$, and the dual affine connection $\nabla^*$ by the transversal distribution $\Sigma^+$. Define the frames $F = (x,\xi_1,\dots,\xi_n)$ and $F^* = (\chi,\eta_1,\dots,\eta_n)$. Then the Maurer-Cartan forms
\[
\left\{
\begin{array}{l}
 U_{\alpha}
 = F^{-1}\partial_\alpha F
\\
 U^*_{\alpha} = (F^*)^{-1}\partial_\alpha F^*
\end{array}
\right.
\]
 are given by
\[
\left\{
\begin{array}{l}
(U_{\alpha})_{00} = \psi_{\alpha},\quad (U_{\alpha})_{0\gamma} = -h_{\gamma\alpha},\quad (U_{\alpha})_{\beta 0} = \delta^{\beta}_{\alpha},\quad (U_{\alpha})_{\beta\gamma} = \nabla^{\beta}_{\alpha\gamma} + \psi_{\alpha}\delta^{\beta}_{\gamma},
 \\
(U^*_{\alpha})_{00} = -\psi_{\alpha},\quad (U^*_{\alpha})_{0\gamma} = -h_{\alpha\gamma},\quad
(U^*_{\alpha})_{\beta 0} = \delta^{\beta}_{\alpha},\quad (U^*_{\alpha})_{\beta\gamma} = \nabla^{*\beta}_{\alpha\gamma} - \psi_{\alpha}\delta^{\beta}_{\gamma}. \qedhere
\end{array}
\right.
\]
}

Let now $M^n$ be a manifold equipped with an affine connection $\nabla$ and a non-degenerate $(0,2)$-tensor $h$. By the results of \cite[Section 4]{Hildebrand11A} the frame equations are locally integrable for some appropriate $1$-form $\psi$ if and only if $\nabla$ is projectively flat and $h$ is obtained from the Ricci tensor of $\nabla$ by formula \eqref{P_Ricci}. Let us verify this by direct calculation.

The integrability conditions for the frame $F$ are given by
\[
 \partial_{\delta} U_{\alpha}
 -\partial_{\alpha} U_{\delta}
+ [U_{\delta}, U_{\alpha}]  = {\bf 0}.
\]
The upper left corner of this identity yields $-h_{\alpha\delta} + \partial_{\delta}\psi_{\alpha} = -h_{\delta\alpha} + \partial_{\alpha}\psi_{\delta}$. The lower left block is not informative, while the upper right block yields $h_{\beta\delta}\nabla^{\beta}_{\alpha\gamma} + \partial_{\delta} h_{\gamma\alpha} = h_{\beta\alpha}\nabla^{\beta}_{\delta\gamma} + \partial_{\alpha} h_{\gamma\delta}$. Finally, the lower right block yields
\[ -\delta^{\beta}_{\delta}h_{\gamma\alpha} + \nabla^{\beta}_{\delta\epsilon}\nabla^{\epsilon}_{\alpha\gamma} + \partial_{\delta}\nabla^{\beta}_{\alpha\gamma}+ (\partial_{\delta}\psi_{\alpha})\delta^{\beta}_{\gamma} = -\delta^{\beta}_{\alpha}h_{\gamma\delta} + \nabla^{\beta}_{\alpha\epsilon}\nabla^{\epsilon}_{\delta\gamma} + \partial_{\alpha}\nabla^{\beta}_{\delta\gamma} + (\partial_{\alpha}\psi_{\delta})\delta^{\beta}_{\gamma}.
\]
Denoting the Riemann curvature tensor of $\nabla$ by
\[ R^{\beta}_{\gamma\delta\alpha} = \partial_{\delta}\nabla^{\beta}_{\alpha\gamma} - \partial_{\alpha}\nabla^{\beta}_{\delta\gamma} + \nabla^{\beta}_{\delta\epsilon}\nabla^{\epsilon}_{\alpha\gamma} - \nabla^{\beta}_{\alpha\epsilon}\nabla^{\epsilon}_{\delta\gamma}
\]
and the Ricci tensor by $R_{\gamma\alpha} = R^{\delta}_{\gamma\delta\alpha}$, we obtain the compatibility conditions
\begin{equation} \label{compatibility_n}
\left\{
\begin{array}{l}
\partial_{\delta}\psi_{\alpha} - \partial_{\alpha}\psi_{\delta}
 = h_{\alpha\delta} - h_{\delta\alpha},\\[0.1cm]
 \nabla_{\delta}h_{\gamma\alpha} - \nabla_{\alpha}h_{\gamma\delta} = 0,\\[0.1cm]
R^{\beta}_{\gamma\delta\alpha} = \delta^{\beta}_{\delta}h_{\gamma\alpha} - \delta^{\beta}_{\alpha}h_{\gamma\delta} + \delta^{\beta}_{\gamma}(h_{\delta\alpha} - h_{\alpha\delta}).
\end{array}
\right.
\end{equation}
From the last condition it follows by contraction that $R_{\gamma\alpha} = nh_{\gamma\alpha} - h_{\alpha\gamma}$, which is indeed equivalent to \eqref{P_Ricci}. The last condition can then be rewritten as
\[ R^{\beta}_{\gamma\delta\alpha} + \frac{1}{n^2-1} \left\{\delta^{\beta}_{\alpha}(nR_{\gamma\delta} + R_{\delta\gamma}) - \delta^{\beta}_{\delta}(nR_{\gamma\alpha} + R_{\alpha\gamma}) + (n-1)\delta^{\beta}_{\gamma}(R_{\alpha\delta} - R_{\delta\alpha}) \right\} = 0.
\]
On the left-hand side we recognize the \emph{Weyl projective curvature }tensor $W^{\beta}_{\gamma\delta\alpha}$ \cite[eq.~(7p)]{Weyl21} of the connection $\nabla$. The second condition in \eqref{compatibility_n} is the symmetry of the cubic form $C_{\alpha\beta\gamma} = \nabla_{\gamma}h_{\alpha\beta}$ in the last two indices. It implies the closed-ness of the form $\omega_{\alpha\delta} = \frac12(h_{\alpha\delta} - h_{\delta\alpha})$. The first condition in \eqref{compatibility_n} can be written as $d\psi = -2\omega$. If the second condition holds the first condition can locally be satisfied by an appropriate choice of the potential $\psi$.

Thus the integrability conditions on $\nabla$ and $h$ amount to relation \eqref{P_Ricci}, the vanishing of $W^{\beta}_{\gamma\delta\alpha}$, and the symmetry $C_{\alpha\beta\gamma} = C_{\alpha\gamma\beta}$. These are indeed the necessary and sufficient conditions for projective flatness of $\nabla$ \cite[p.~104]{Weyl21}. Moreover, $W^{\beta}_{\gamma\delta\alpha}$ vanishes identically for $n = 2$, and the condition $W^{\beta}_{\gamma\delta\alpha} = 0$ implies the symmetry of $C$ for $n \geq 3$ \cite[p.~105]{Weyl21}.

\section{Second fundamental form and difference tensor} \label{sec:minimal}
In this section we compute the second fundamental form $I\!I$ of a non-degenerate immersion $f$ of a manifold $M^n$ into $\DP$. We show that the immersion is totally geodesic if and only if the cubic form $C$ vanishes, and it is minimal if and only if $C$ is trace-less with respect to the last two indices. The results of this section are valid not only for immersions into the para-K\"ahler space form $\DP$, but for non-degenerate immersions into general para-K\"ahler manifolds ${\mathbb M}^{2n}_n$.

{\theorem \label{thm:minimal} Let $f: M^n \to {\mathbb M}^{2n}_n$
 be an isometric immersion of a pseudo-Riemannian manifold into a para-K\"ahler manifold such that the eigen-distributions $\Sigma^{\pm}$ of the para-complex structure $\Ip$ are transversal to $f$. Let $\nabla,\nabla^*$ be the affine connections defined on $M^n$ by the transversal distributions $\Sigma^-,\Sigma^+$, respectively, and let $K = \nabla^* - \nabla$ be the difference tensor. Let $\Pi_{\pm} = \frac12(\id \pm \Ip)$ be the projections onto $\Sigma^{\pm}$, respectively, and let $\Pi_N$ be the orthogonal projection onto the normal subspace to $f$.
Then the second fundamental form of $f$ is given by
\[
I\!I(X,Y)= \Pi_N\Pi_-f_*K(X,Y)
\]
for all vector fields $X,Y$ on $M^n$. }

\begin{proof}
Let $\widehat \nabla$ be the Levi-Civita connection of the metric $g$ on $\mathbb M^{2n}_n$. Then by definition $I\!I(X,Y) = \Pi_N\widehat\nabla_{f_*X}f_*Y$. Also by definition of $\nabla,\nabla^*$ we have $\Pi_+(\widehat\nabla_{f_*X}f_*Y - f_*\nabla_XY) = 0$ and $\Pi_-(\widehat\nabla_{f_*X}f_*Y - f_*\nabla^*_XY) = 0$, because $\Sigma^- = \ker \Pi_+$, $\Sigma^+ = \ker \Pi_-$. Using $\Pi_+ + \Pi_- = \id$ we obtain
\begin{align*}
\Pi_-f_*K(X,Y) &= \Pi_-f_*\nabla^*_XY - (\id - \Pi_+)f_*\nabla_XY \\&= \Pi_-
 \widehat\nabla_{f_*X}f_*Y - f_*\nabla_XY + \Pi_+\widehat\nabla_{f_*X}f_*Y \\
&= \widehat\nabla_{f_*X}f_*Y - f_*\nabla_XY.
\end{align*}
Applying the projection $\Pi_N$ to both sides we obtain the desired identity.
\end{proof}
 The cubic form is defined as in Section {\rm \ref{sec:frame_eq}} by the relation
 $C= \nabla h$, where $h$ is the pull-back of
 the sum $g + \omega$ to $M^n$ and $\omega$ is the symplectic form of $\mathbb M^{2n}_n$.

\begin{corollary} \label{cor:K_minimal}
 Assume the conditions of Theorem $\ref{thm:minimal}$ and let $g$ be the pseudo-metric on $M^n$.
 Then the immersion $f$ is minimal if and only if
 $\operatorname{Tr}_g K=0$, and it is totally geodesic if and only if $K = 0$.
 Equivalently, the immersion $f$ is minimal if and only if
 $\operatorname{Tr}_g C=0$, and it is totally geodesic if and only if $C = 0$,
 where $C$ is the cubic form.
\end{corollary}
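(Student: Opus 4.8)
The plan is to deduce both equivalences directly from the formula $I\!I(X,Y) = \Pi_N\Pi_-f_*K(X,Y)$ established in Theorem \ref{thm:minimal}, together with the relation \eqref{K_C} connecting the difference tensor $K$ to the cubic form $C$. First I would settle the totally geodesic criterion, which is cleaner. Recall that $f$ is totally geodesic iff $I\!I \equiv 0$. If $K = 0$, then clearly $I\!I = 0$ by the theorem. For the converse, the main point to verify is that the composite projection $\Pi_N\Pi_-$, restricted to the image $f_*K(X,Y)$, is injective enough to recover $K$. The natural approach is to use that $\Sigma^-$ is transversal to $f$: the difference tensor $K(X,Y)$ is a section of $TM^n$, so $f_*K(X,Y)$ lands in the tangent space to $f$, and the relation $\Pi_-(\widehat\nabla_{f_*X}f_*Y - f_*\nabla^*_XY) = 0$ from the proof of Theorem \ref{thm:minimal} shows $\Pi_-f_*K(X,Y)$ equals the $\Sigma^-$-component of $\widehat\nabla_{f_*X}f_*Y - f_*\nabla_XY$. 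I expect the cleanest argument to run through $C$ rather than directly through $K$: by \eqref{K_C} we have $(\nabla_X h)(Z,Y) = h(Z,K(X,Y))$, and since $h$ is non-degenerate, $K$ vanishes identically iff $C = \nabla h$ vanishes identically. This immediately gives the equivalence ``$K = 0 \iff C = 0$.''

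Next I would handle the minimal criterion. Minimality means the mean curvature vector $\operatorname{Tr}_g I\!I$ vanishes. Taking the $g$-trace of the formula in Theorem \ref{thm:minimal} gives $\operatorname{Tr}_g I\!I = \Pi_N\Pi_-f_*(\operatorname{Tr}_g K)$, using that $\Pi_N$, $\Pi_-$, and $f_*$ are all independent of the trace variables and hence commute with contraction by $g$. So $\operatorname{Tr}_g K = 0$ implies minimality. For the converse I again want to exploit non-degeneracy: tracing \eqref{K_C} with $g^{-1}$ against the last two slots, $\operatorname{Tr}_g C$ and $\operatorname{Tr}_g K$ are related by $(\operatorname{Tr}_g C)(Z) = h(Z, \operatorname{Tr}_g K)$, so non-degeneracy of $h$ yields the equivalence ``$\operatorname{Tr}_g K = 0 \iff \operatorname{Tr}_g C = 0$.'' It then remains to show that minimality (i.e. $\operatorname{Tr}_g I\!I = 0$) forces $\operatorname{Tr}_g K = 0$, not merely the other direction.

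The main obstacle, and the step deserving the most care, is exactly this last implication: the formula only asserts $I\!I = \Pi_N\Pi_- f_* K$, and a priori $\Pi_N\Pi_-f_*$ could have a kernel, so vanishing of $\Pi_N\Pi_-f_*(\operatorname{Tr}_g K)$ need not force $\operatorname{Tr}_g K = 0$. I would resolve this by analyzing the map $v \mapsto \Pi_N\Pi_-f_*v$ on $T_pM^n$. Since $f_*$ is injective (an immersion) with image transversal to $\Sigma^-$, and $\Sigma^+ = \ker\Pi_-$ is also transversal to $f$, the projection $\Pi_-f_*$ is injective on $T_pM^n$; its image is a subspace of $\Sigma^-$ complementary to $f_*TM^n \cap \Sigma^-$. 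The key geometric fact to extract is that on this image the further orthogonal projection $\Pi_N$ onto the normal bundle is injective, which should follow from the para-Kähler compatibility $g(X,Y) = \omega(\Ip X, Y)$ and the non-degeneracy of the induced metric $g$ on $M^n$: a nonzero tangent-to-$f$ direction cannot have its $\Pi_-$-part land entirely in $f_*TM^n$ without contradicting non-degeneracy of $h$. Once the composite $\Pi_N\Pi_-f_*$ is shown injective, both converses follow at once, and the two equivalences for $K$ transfer to $C$ via \eqref{K_C} as above. I would present the $C$-equivalences as the headline statements, deriving them from the $K$-equivalences, since $C = \nabla h$ is the object of primary interest in the later sections.
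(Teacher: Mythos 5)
Your proposal is correct and follows essentially the same route as the paper: the paper likewise reduces both equivalences to the fact that $\Pi_N\Pi_-f_*$ maps $TM^n$ bijectively onto the normal bundle (asserted there directly from the transversality of $\Sigma^{\pm}$ to the tangent and normal subspaces), and then transfers the statements about $K$ to statements about $C$ via \eqref{K_C} and the non-degeneracy of $h$. One small simplification: your key injectivity step needs nothing beyond the transversality hypotheses and non-degeneracy of the induced metric — if $\Pi_N\Pi_-f_*v=0$ then $\Pi_-f_*v\in\Sigma^-\cap f_*TM^n=\{0\}$, hence $f_*v\in\Sigma^+\cap f_*TM^n=\{0\}$ — so the appeal to para-K\"ahler compatibility and to non-degeneracy of $h$ at that point is unnecessary.
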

\begin{proof}
Since the immersion $f$ is non-degenerate, the subspaces $\Sigma^{\pm}$ are transversal to both the tangent and the normal subspaces. Therefore the product $\Pi_N\Pi_-f_*$ maps the tangent bundle $TM^n$ bijectively onto the normal bundle. Then by Theorem \ref{thm:minimal} the mean curvature of $f$
is zero if and only if the contraction of the difference tensor with the metric vanishes. Likewise, the second fundamental form vanishes if and only if the difference tensor vanishes. The second part of the assertion follows from \eqref{K_C} and the non-degeneracy of $h$, which in turn follows from the non-degeneracy of the immersion $f$.
\end{proof}

\section{Definite surface immersions} \label{sec:surface}
In this section we specialize to immersions defined on surfaces $M^2$ with definite metric.  We allow both a positive definite and a negative definite metric. For simplicity we assume that the surface is simply connected. We deduce the frame equations and the compatibility conditions in the uniformizing coordinate on the surface $M^2$. We then consider the special cases of Lagrangian immersions and minimal immersions.

\subsection{The Maurer-Cartan form}
 Whatever the sign of the metric, we may introduce a uniformizing complex coordinate
$z = y^1 + iy^2$
on the surface $M^2$ in which the metric takes the form
\[
 g = 2H e^{u}\,|dz|^2,
\]
 where $u: M^2 \to \mathbb R$ is a function of $z$ and $\bar z$,
 i.e., $u=u(z, \bar z)$ and $H = 1$ (the elliptic case) or $-1$ (the hyperbolic case).
 In the corresponding real coordinates $y^1,y^2$ the tensor $h$ takes the form
\[
\begin{pmatrix}
 h_{11} & h_{12}\\ h_{21} & h_{22}
\end{pmatrix}
 = \begin{pmatrix} a & b \\ -b & a \end{pmatrix}
\]
 with $a = 2 H e^u$.
 We first establish relations between the projectively flat connection $\nabla$, the cubic form $C$, and the expressions $\langle \partial_{\alpha}\xi,\eta \rangle$ and similar scalar products in this coordinate system. This will serve to express the Maurer-Cartan form in terms of the two independent entries $a,b$ of $h$ and their derivatives, the two entries of $\psi$, and the 6 independent entries of the cubic form $C$.

We now convert the real coordinates to complex coordinates. The complex canonical basis vectors take the form
\[
\partial_z = \frac12(\partial_1 - i\partial_2),\quad
\partial_{\bar z} = \frac12(\partial_1 + i\partial_2).
\]
For convenience we introduce the complex functions
\begin{gather}\label{eq:rho}
 c = 1 +  i\frac{b}{a} \quad \mbox{and} \quad
 \rho_z = \langle \partial_z x,\chi \rangle
 = \frac12(\psi_1 - i\psi_2).
\end{gather}
We also introduce the \emph{para-K\"{a}hler angle function} $\theta$ by
 \begin{equation}\label{eq:angle}
 \theta = \arctan \left(\frac{b}{a}\right) + \frac{\pi}2 \in
 (0, \pi).
\end{equation}
 It is straightforward to see that
 $\arg c = \theta - \frac{\pi}2 \in\left(-\frac{\pi}2, \frac{\pi}2\right)$.
\begin{remark}
 The complex function $c$ is nowhere zero on a surface $M^2$ since we assume
 that the immersion $f:M^2 \to \DPt$ is definite.\footnote{There is no ``complex point'' compared to the $\C P^2$ case}
\end{remark}

Using the vector-valued 1-forms $\xi,\eta$,
define vector-valued complex functions
\[
\xi_z = \frac12(\xi_1 - i\xi_2) , \quad \eta_z = \frac12(\eta_1 - i\eta_2),
 \quad \xi_{\bar z} = \overline{\xi_{z}}, \quad \eta_{\bar z} = \overline{\eta_{z}}.
\]
By abuse of notation, we denote $\xi_z$, $\eta_z$ and $\rho_z$
 by $\xi$, $\eta$ and $\rho$ respectively.
From \eqref{xi_eta_chi_pi_orth} we then obtain
\begin{equation} \label{zeta_varsigma_orth}
\langle \xi,\chi \rangle = \langle x,\eta \rangle = 0.
\end{equation}
In complex coordinates we then have
\begin{equation} \label{complexP}
h_{zz} = h_{\bar z\bar z} = 0, \quad h_{z\bar z} = \frac12(a+ib) =
 H e^{u} c, \quad h_{\bar zz} = \frac12(a-ib) =  H e^{u}\bar c.
\end{equation}
By Lemma \ref{lem:P_general} we obtain
\begin{equation} \label{zeta_varsigma_products}
\langle \xi,\eta \rangle = \langle \bar \xi,
\bar\eta \rangle = 0,\quad \langle \xi,\bar\eta \rangle =
 He^{u}c,\quad \langle \bar\xi,\eta \rangle =
 H e^{u}\bar c.
\end{equation}
Differentiating these relations, we obtain
\begin{gather*}
 \langle \xi,\partial_z\eta \rangle = - \langle \partial_z\xi,\eta \rangle, \quad \langle \xi,\partial_z\bar\eta \rangle = - \langle \partial_z\xi,\bar\eta \rangle + \partial_z(H e^{u} c), \\
\langle \xi,\partial_{\bar z}\eta \rangle = - \langle \partial_{\bar z}\xi,\eta \rangle, \quad \langle \xi,\partial_{\bar z}\bar\eta \rangle = - \langle \partial_{\bar z}\xi,\bar\eta \rangle + \partial_{\bar z}(H e^{u} c).
\end{gather*}
Relations \eqref{formula1} yield
\begin{equation} \label{complex_formula1}
\langle \partial_z x,\eta \rangle = \langle \partial_{\bar z}x,
\bar\eta \rangle = 0,\qquad \langle \partial_zx,\bar\eta \rangle
 =He^{u}c,\qquad \langle \partial_{\bar z}x,\eta \rangle =H e^{u}\bar c,
\end{equation}
while relations \eqref{formula2} become
\begin{equation} \label{complex_formula2}
-\langle \xi,\partial_z\chi \rangle = \langle \partial_z \xi,\chi \rangle = \langle \partial_{\bar z}\bar
\xi,\chi \rangle = 0,\qquad \langle \partial_{\bar z}\xi,\chi \rangle
 = -H e^{u}c,\qquad \langle \partial_{z}\bar\xi,\chi \rangle
= -H e^{u}\bar c.
\end{equation}
Note also that since the operators $\partial_z,\partial_{\bar z}$ commute, we have
\begin{align} \label{z_xi_commutator}
\langle \partial_z\bar\xi - \partial_{\bar z} \xi,\eta \rangle =& \langle \partial_z\partial_{\bar z} x - \langle \partial_z\partial_{\bar z} x,\chi \rangle x - \langle \partial_{\bar z} x,\partial_z\chi \rangle x - \langle \partial_{\bar z} x,\chi \rangle \partial_z x,\eta \rangle \nonumber\\ &- \langle \partial_{\bar z}\partial_z x - \langle \partial_{\bar z}\partial_z x,\chi \rangle x - \langle \partial_z x,\partial_{\bar z}\chi \rangle x - \langle \partial_z x,\chi \rangle \partial_{\bar z} x,\eta \rangle \nonumber\\ =& \langle - \bar\rho \partial_z x + \rho \partial_{\bar z} x,\eta \rangle = \rho\bar c He^{u}.
\end{align}
We now introduce respectively functions $\phi$ and $Q$ by
\begin{align}\label{eq:phi}
\phi &:= H e^{-u} \langle \partial_{\bar z} \xi, \eta\rangle, \\
Q &:= \langle \partial_{z} \xi, \eta\rangle. \label{eq:Q}
\end{align}
 Note that $\phi d z$ and $Q d z^3$ are
 well-defined as a $1$-form and a $3$-form on $M^2$, respectively.
\begin{remark}\label{rm:choiceoflift}
\mbox{}
\begin{enumerate}
\setlength{\itemsep}{0cm}
\renewcommand{\labelenumi}{(\arabic{enumi})}
 \item
We now observe that if we change a lift $\f=(x, \chi)$ to
 $(\alpha x, \alpha^{-1} \chi)$ by a real-valued function $\alpha$,
 then the real-vectors $\xi_1, \xi_2$,
 $\eta_1$ and $\eta_2$
 change accordingly to $\alpha \xi_1, \alpha\xi_2$,
 $\alpha^{-1}\eta_1$, and $\alpha^{-1}\eta_2$, respectively.
 Therefore the functions $u, c$, and by virtue of \eqref{zeta_varsigma_products} also $\phi$ and $Q$, are independent of
 choice of a lift $\f$, i.e., they are functions depending on $f$ not $\f$.
 \item The \emph{Tchebycheff form} is defined by
 $T_{\alpha} = \frac12C_{\alpha\beta\gamma}g^{\beta\gamma}$. From Corollary \ref{cor:C} we get $C_{zzz} = -2\langle \partial_z\xi,\eta \rangle$, $C_{zz\bar z} = C_{z\bar zz} = -2\langle \partial_{\bar z}\xi,\eta \rangle$, and hence
 \[ Q = -\frac12 C_{zzz}.
 \]
Therefore the cubic form $Q dz^3$ is nothing but the complex
 component of the cubic form $C = \nabla h$. Further in complex coordinates we have $g^{-1} = \frac{2}{a} \begin{pmatrix} 0 & 1 \\ 1 & 0 \end{pmatrix}$ and hence $T_z = \frac1a(C_{zz\bar z} + C_{z\bar zz}) = -\frac4a\langle \partial_{\bar z}\xi,\eta \rangle = -2He^{-u}\langle \partial_{\bar z}\xi,\eta \rangle$,
\[ \phi = -\frac12T_z.
\]
The form $\phi \,d z$ is sometimes called the \emph{mean curvature} $1$-form.
\end{enumerate}
\end{remark}
\medskip
 We are now in a position to formulate the moving frame equations. Instead of the real moving frames $F,F^*$ introduced in Section \ref{sec:frame_eq} we shall consider the complex moving frames
\[ \widetilde{\mathcal F} = (\xi,\bar\xi,x), \qquad \widetilde{\cal F}^* = (\bar\eta,\eta,\chi).
\]
By \eqref{zeta_varsigma_orth} and \eqref{zeta_varsigma_products} the product
$\widetilde {\cal F}^T \widetilde {\cal F}^*$ equals
\[
\widetilde D=
\begin{pmatrix}
H c e^{u}  & 0  &0 \\
0 & H \bar ce^{u}  & 0 \\
0 & 0 & 1
\end{pmatrix},
\]
and $\widetilde {\cal F}^* = \widetilde {\cal F}^{-T} \widetilde D$.
 The Maurer-Cartan forms
 $\widetilde {\cal U}_z :=
 \widetilde {\cal F}^{-1}\partial_z \widetilde {\cal F}$
 and $\widetilde {\cal U}_{\bar z} := \widetilde {\cal F}^{-1}\partial_{\bar z}\widetilde {\cal F}$
are given by
\begin{equation*}
\widetilde {\cal U}_z =
\widetilde D^{-1}
\begin{pmatrix}
\langle \partial_z \xi,\bar\eta \rangle &
\langle \partial_z\bar\xi,\bar\eta \rangle &
\langle \partial_zx,\bar\eta \rangle \\
\langle \partial_z \xi,\eta \rangle &
\langle \partial_z\bar\xi,\eta \rangle &
\langle \partial_zx,\eta \rangle \\
\langle \partial_z \xi,\chi \rangle &
\langle \partial_z\bar\xi,\chi \rangle &
\langle \partial_zx,\chi \rangle
\end{pmatrix},
\quad
\widetilde {\cal U}_{\bar z}=
\widetilde D^{-1}
 \begin{pmatrix}
\langle \partial_{\bar z}\xi,\bar\eta \rangle &
\langle \partial_{\bar z}\bar\xi,\bar\eta \rangle &
\langle \partial_{\bar z}x,\bar\eta \rangle \\
\langle \partial_{\bar z}\xi,\eta \rangle &
\langle \partial_{\bar z}\bar\xi,\eta \rangle &
\langle \partial_{\bar z}x,\eta \rangle \\
\langle \partial_{\bar z}\xi,\chi \rangle &
\langle \partial_{\bar z}\bar\xi,\chi \rangle &
\langle \partial_{\bar z}x,\chi \rangle
\end{pmatrix}.
\end{equation*}
We now compute $\widetilde {\mathcal U}_{z}$ and $\widetilde {\mathcal U}_{\bar z}$
 as follows: Set
\[
 \partial_z \xi = p \xi + q \bar \xi + r x,
\]
 where $p, q$ and $r$ are unknown complex functions to be determined.
 Taking pairing with respect to $\chi$, it is easy to see that
 $r=0$. Moreover, taking pairing with respect to $\eta$ we have
 $\langle \partial_z \xi, \eta \rangle = q \langle \bar \xi, \eta \rangle$,
 and by \eqref{zeta_varsigma_products} and \eqref{eq:Q},
 $q= H \bar c^{-1} e^{-u} Q$ follows. Finally taking pairing with respect to
 $\bar \eta$, we have
\[
 \langle \partial_z \xi, \bar \eta\rangle = p \langle \xi, \bar \eta\rangle.
\]
 Let us compute the left-hand side by taking the derivative of
 $\langle \xi, \bar \eta\rangle = H e^{u} c $ with respect to $z$, that is,
\[
 \langle \partial_z \xi, \bar \eta \rangle
= -  \langle \xi, \partial_z \bar \eta \rangle + H e^{u}
 c\> \partial_z (\log c +u).
\]
 Since $\partial_z \bar \eta = \partial_z \partial_{\bar z}\chi -
 \partial_z (\bar \rho \chi)$ and by virtue of \eqref{complex_formula2} $\langle \xi, \partial_z\chi \rangle=0$, we get
\[
 \langle \xi, \partial_z \bar \eta\rangle  =
  \langle \xi, \partial_{\bar z} \partial_z \chi \rangle
 = - \langle \partial_{\bar z}\xi, \partial_z \chi \rangle.
\]
 Moreover,  $\partial_z \chi = \eta - \rho \chi$ and thus
 $\langle \partial_{\bar z}\xi, \partial_z \chi \rangle = H e^{u} \phi
 + \rho H e^{u} c$ holds. Therefore
 $p = \frac{\phi}{c} + \rho + \partial_z (\log c + u)$ follows.
 Similarly, set
 \[
 \partial_{z} \bar \xi = p \xi + q \bar \xi + r x,
\]
 where $p, q$ and $r$ are unknown complex functions to be determined.
 Taking pairing with respect to $\chi$, it is easy to see that
 $r= - H e^{u} \bar c$ by \eqref{complex_formula2}.
 Next taking pairing with respect to $\eta$ we have
 $\langle \partial_z \bar \xi, \eta \rangle = q \langle \bar \xi, \eta \rangle = qHe^u\bar c$,
 and by \eqref{z_xi_commutator}, $q= \rho + \bar c^{-1} \phi$ follows.
 Finally taking pairing with respect to  $\bar \eta$, we have
 $\langle \partial_z \bar \xi, \bar \eta\rangle = p \langle \xi, \bar \eta\rangle$,
 and by \eqref{eq:phi} and \eqref{complex_formula2},
 $p = c^{-1} \bar \phi$ follows.
By \eqref{dMuX} we have
 \[
 \partial_{z} x = \xi + \rho x = 1 \cdot \xi + 0 \cdot \bar\xi + \rho \cdot x.
\]
 One can compute $\widetilde{\mathcal U}_{\bar z }$ similarly.

 Thus the Maurer-Cartan form can be computed as follows:
\begin{align*}
\widetilde {\cal U}_{z}&=
\begin{pmatrix}
\rho+c^{-1} \phi + \partial_z (u+ \log c) & c^{-1} \bar \phi & 1 \\[0.1cm]
H \bar c^{-1} e^{- u} Q  & \rho + \bar c^{-1} \phi & 0 \\ 0 &
 -H \bar ce^{u} & \rho
\end{pmatrix}, \\
\widetilde {\cal U}_{\bar z}&=
 \begin{pmatrix}
 \bar \rho+ c^{-1} \bar \phi & H c^{-1}e^{-u}\bar Q & 0 \\[0.1cm]
\bar c^{-1}\phi &\bar \rho+\bar c^{-1} \bar \phi + \partial_{\bar z} (u+ \log \bar c) & 1 \\ -H c e^{u} & 0 & \bar \rho \end{pmatrix}.
\end{align*}
The compatibility conditions
\[
[\widetilde{\cal U}_z,\widetilde {\cal U}_{\bar z}] +
 \partial_z \widetilde {\cal U}_{\bar z}- \partial_{\bar z} \widetilde {\cal U}_{z} = 0
\]
 amount to the real equation $\partial_{\bar z}\rho - \partial_z \bar\rho
 = H e^{u}( c - \bar c)$, which can be written as
\begin{equation} \label{comp_rho}
\operatorname{Im}(\partial_{\bar z}\rho) = H e^{u}
 \operatorname{Im} c
\end{equation}
and is equivalent to the first equation in \eqref{compatibility_n},
and the two compatibility conditions
\begin{gather} \label{complex_comp1}
|c|^{-2} |\phi|^2- |c|^{-2} e^{-2 u} |Q|^2  + H (\bar c- 2 c) e^{u}
 -
\partial_z \left(c^{-1} \bar  \phi\right) +\partial_{\bar z} (c^{-1} \phi)
- \partial_z \partial_{\bar z} (\log c + u) = 0, \\
\label{complex_comp2}
(\bar c^{-1}  - c^{-1})(e^{u} \bar \phi^2-\bar Q \phi )
 - e^{u} \bar \phi \, \partial_{\bar z} \log |c|^2
 + e^{u} (\partial_{\bar z} \bar \phi - \bar \phi \partial_{\bar z} u )
 + \partial_{\bar z}Q =0.
\end{gather}

 As pointed out in (1) in Remark \ref{rm:choiceoflift}, the functions
 $u, c, \phi$ and $Q$ are independent of the choice of a lift $\f$.
 On the other hand, the function $\rho$ depends on the choice of a lift.
\begin{prop}\label{prp:rho0}
 By choosing a lift $\f$ properly, the function $\rho$
 can be made to satisfy the condition
 \begin{equation}\label{eq:rho0}
  \partial_{\bar z} \rho = H c e^u.
 \end{equation}
 In this case, we denote it by $\rho_0$ instead of $\rho$.
\end{prop}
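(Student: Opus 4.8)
The plan is to exploit the one remaining freedom in the construction, namely the choice of lift $\f$, which by \eqref{verticalalpha} alters $\psi$ only by an exact differential. I would show that this freedom is exactly enough to prescribe the real part of $\partial_{\bar z}\rho$, while its imaginary part is already pinned down by the integrability of the immersion.

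First I would record the transformation law for $\rho$ under a scaling $\f \mapsto \f_\alpha = (\alpha x, \alpha^{-1}\chi)$ with $\alpha$ a nowhere vanishing real function. Setting $\lambda = \log|\alpha|$, which ranges over \emph{all} smooth real functions on $M^2$, the rule $\psi \mapsto \psi + d\lambda$ together with $\rho = \frac12(\psi_1 - i\psi_2)$ from \eqref{eq:rho} gives $\rho \mapsto \rho + \partial_z\lambda$, and hence
\[
\partial_{\bar z}\rho \longmapsto \partial_{\bar z}\rho + \partial_z\partial_{\bar z}\lambda .
\]
Since $\partial_z\partial_{\bar z}\lambda = \frac14(\partial_1^2+\partial_2^2)\lambda$ is real, a change of lift can modify only the \emph{real part} of $\partial_{\bar z}\rho$, and it does so by adding the real quantity $\partial_z\partial_{\bar z}\lambda$.

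Next I would note that the imaginary part already agrees with the target. The desired identity \eqref{eq:rho0} reads $\partial_{\bar z}\rho = Hce^u$, whose imaginary part is $He^u\operatorname{Im} c$; but this is precisely the compatibility condition \eqref{comp_rho}, which holds for \emph{any} lift of the given immersion $f$. Using $\operatorname{Re} c = 1$, it therefore remains only to arrange $\operatorname{Re}(\partial_{\bar z}\rho) = He^u$, which by the transformation law above means choosing $\lambda$ so that
\[
\partial_z\partial_{\bar z}\lambda = He^u - \operatorname{Re}(\partial_{\bar z}\rho).
\]
This is a real Poisson equation on $M^2$; any solution $\lambda$ produces the lift $\f_\alpha$ with $\alpha = e^{\lambda}$ for which $\rho_0 = \rho + \partial_z\lambda$ satisfies \eqref{eq:rho0}.

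The main obstacle is the \emph{global} solvability of this Poisson equation. Locally it is always solvable, and on a simply connected non-compact surface (conformally the disk or the plane by uniformization) the equation $(\partial_1^2+\partial_2^2)\lambda = F$ admits a global smooth solution for every smooth $F$; this is the fact I would invoke. I would also flag that non-compactness is genuinely required: on a closed surface $\int_M \partial_z\partial_{\bar z}\lambda\, dy^1 dy^2 = 0$ by Stokes, while $d(\star\psi) = 4\operatorname{Re}(\partial_{\bar z}\rho)\,dy^1\wedge dy^2$ forces $\int_M \operatorname{Re}(\partial_{\bar z}\rho) = 0$, so the equation would demand $\int_M He^u\, dy^1 dy^2 = 0$, which is impossible because the definiteness of the metric makes $He^u$ of constant sign. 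Thus the hypotheses of definiteness and simple connectivity (with the implicit non-compactness) are exactly what render the normalization \eqref{eq:rho0} attainable.
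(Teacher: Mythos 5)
Your proposal is correct, and at its core it uses the same two ingredients as the paper's proof: the scaling freedom $\rho \mapsto \rho + \partial_z\lambda$ with $\lambda$ an arbitrary smooth real function, and the compatibility condition \eqref{comp_rho}, which makes the imaginary part of \eqref{eq:rho0} automatic for every lift. The routes differ in how the analytic step is organized. The paper first posits a solution $\rho_0$ of the $\bar\partial$-equation $\partial_{\bar z}\rho_0 = Hce^u$ (global solvability of this equation on $\mathbb D$ is the implicit PDE input), then uses \eqref{comp_rho} to show that the real $1$-form $\Omega = (\rho_0-\rho)\,dz + \overline{(\rho_0-\rho)}\,d\bar z$ is closed, integrates it to a real potential $\delta$ by simple connectivity, and rescales the lift by $e^{\delta}$, so that the new function is $\rho + \partial_z\delta = \rho_0$. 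You instead split into real and imaginary parts at the outset and reduce everything to the real Poisson equation $\partial_z\partial_{\bar z}\lambda = He^u - \operatorname{Re}(\partial_{\bar z}\rho)$. The two arguments are equivalent in content: applying $\partial_{\bar z}$ to $\partial_z\delta = \rho_0-\rho$ and taking real parts shows that the paper's $\delta$ solves exactly your Poisson equation, while closedness of $\Omega$ is exactly your observation that a change of lift can only correct the real part of $\partial_{\bar z}\rho$, the imaginary part being already pinned by \eqref{comp_rho}. What each buys: the paper's complex packaging requires only the Poincar\'e lemma once the existence of $\rho_0$ is granted, whereas your version puts the elliptic solvability input in plain view and identifies precisely where noncompactness and simple connectivity are used; your closing remark on the obstruction over a closed surface is a worthwhile point the paper does not record, though as stated it is only heuristic, since a closed surface carries no global conformal coordinate $z$ and is in any case excluded by the paper's setting $M^2 = \mathbb D$.
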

 \begin{proof}
 Let $\rho_0$ as in \eqref{eq:rho0}. Then the compatibility condition \eqref{comp_rho}
 is equivalent to
  \[
   (\rho_0-\rho)_{\bar z} - \overline{(\rho_0-\rho)_{\bar z}}=0.
  \]
 Therefore, the $1$-form
\[
  \Omega = \left\{ (\rho_0-\rho) dz + \overline{(\rho_0-\rho)} d \bar z\right\}
\]
 is a real-closed $1$-form. Let $\delta : \mathbb D \to \R$
 denote a solution to $d \delta = \Omega$. Now the new lift
 $\tilde \f = e^{\delta} \f$ satisfies $\tilde \rho=\rho_0$.
 \end{proof}

We now gauge the frames  $\widetilde {\mathcal F}$ and  $\widetilde {\mathcal F}^*$ to:
\begin{align}
 \mathcal F=  \widetilde {\mathcal F}D , \quad
 \mathcal F^* = \widetilde {\mathcal F}^*D, \label{eq:mathcalF}
\end{align}
 with
\begin{equation*}
D = \widetilde D^{-1/2} \diag(1, 1, i)=
 \begin{pmatrix} (Hc)^{-1/2} e^{-u/2}&0& 0 \\ 0&(H\bar c)^{-1/2} e^{-u/2}&0 \\ 0 &0& i\end{pmatrix}.
\end{equation*}
 Then a straightforward computation shows that the Maurer-Cartan form of
 $\mathcal F$ can be computed as
\[
 \mathcal F^{-1} d \mathcal F :=
 \mathcal U_z d z +  \mathcal U_{\bar z} d \bar z,
\]
with
\begin{equation}\label{eq:UV}
\left\{
\begin{array}{l}
{\cal U}_z =
\begin{pmatrix}
\rho+\frac{1}{2}\partial_z u +\frac12\partial_z \log c + c^{-1} \phi  &
 |c|^{-1} \bar \phi & i (Hc)^{1/2}e^{u/2} \\[0.1cm]
|c|^{-1} e^{- u} H Q  & \rho-\frac{1}{2}\partial_z u -\frac12\partial_z \log \bar c + \bar c^{-1} \phi & 0 \\ 0 &
 i(H \bar c)^{1/2}  e^{u/2} & \rho
\end{pmatrix}, \\[0.8cm]

{\cal U}_{\bar z} =
\begin{pmatrix}
 \bar \rho- \frac12 \partial_{\bar z} u - \frac12\partial_{\bar z} \log c+ c^{-1} \bar \phi & |c|^{-1}e^{-u}H \bar Q & 0 \\[0.1cm]
|c|^{-1}\phi &\bar \rho  + \frac12\partial_{\bar z} u+ \frac12 \partial_{\bar z} \log \bar c + \bar c^{-1} \bar \phi & i(H \bar c)^{1/2} e^{u/2} \\ i(H c)^{1/2} e^{u/2} & 0 & \bar \rho \end{pmatrix}.
\end{array}
\right.
\end{equation}
 We now summarize the
 above discussion as  the following theorem.
\begin{theorem}[Fundamental Theorem of definite surfaces in $\DPt$]
\label{thm:FTS}
 Let $f: M^2 \to \DPt$ be a liftable immersion and $\f:
 M^2 \to \CMf$ a lift. Let $g = 2 H e^{u} d z d \bar z, \, (H \in \{-1, 1\})$
 denote the induced metric, $\theta: M^2 \to (0, \pi)$ the para-K\"ahler angle,
 $Q dz^3$ the cubic form, $\phi dz$ the mean curvature form. Set $c$ by $
 c = 1 + i\tan (\theta - \pi/2)$ and
 $\rho$ by \eqref{eq:rho}. Then \eqref{comp_rho}, \eqref{complex_comp1},
 and \eqref{complex_comp2} are satisfied.

 Conversely let $g= 2 H e^{u} dz d \bar z, \, (H \in \{-1, 1\})$
 be a positive or negative
 definite metric on a simply connected Riemann surface $\mathbb D$.
 Let $\theta : \mathbb D \to (0, \pi)$ be a real valued function and
 $\phi dz$ and $Q dz^3$ be a $1$-form and a $3$-form, respectively.
 Set $c$ by $c = 1 + i \tan (\theta-\pi/2)$ and
 $\rho$ by \eqref{comp_rho}. If these data satisfy \eqref{complex_comp1}
 and \eqref{complex_comp2}, then there exists an immersion $f : \mathbb D
 \to \DPt$ which has invariants stated as above and is unique
 up to isometries of $\DPt$.
\end{theorem}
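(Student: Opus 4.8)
The plan is to prove the two implications separately. The forward (necessity) direction is essentially bookkeeping, while the converse rests on integrating a flat Maurer--Cartan form on the simply connected domain $\mathbb D$ and reconstructing the lift from the resulting frame.

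For necessity, I would start from a liftable immersion $f$ with a chosen lift $\f$ and form the gauged complex frame $\mathcal F$ of \eqref{eq:mathcalF}. By Remark \ref{rm:choiceoflift}(1) the quantities $u,c,\phi,Q$ depend only on $f$ (not on the lift), while $\rho$ is fixed by the lift via \eqref{eq:rho}. Since $\mathcal F$ is a genuine moving frame, its logarithmic derivative $\mathcal U = \mathcal F^{-1}d\mathcal F = \mathcal U_z\,dz + \mathcal U_{\bar z}\,d\bar z$ automatically satisfies the flatness identity $\partial_z\mathcal U_{\bar z} - \partial_{\bar z}\mathcal U_z + [\mathcal U_z,\mathcal U_{\bar z}] = 0$. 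Unpacking this identity entry by entry, which is exactly the computation recorded just before Proposition \ref{prp:rho0}, produces the three relations \eqref{comp_rho}, \eqref{complex_comp1}, \eqref{complex_comp2}. So this direction needs nothing beyond the observation that the Maurer--Cartan equation of an honest frame is an identity.

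For sufficiency, the idea is to run this in reverse. Given the abstract data $(u,\theta,\phi\,dz,Q\,dz^3)$ on $\mathbb D$, I would first produce $\rho$: solving the inhomogeneous Cauchy--Riemann equation \eqref{eq:rho0}, $\partial_{\bar z}\rho = Hce^{u}$, on the simply connected $\mathbb D$ yields a solution $\rho=\rho_0$ which in particular satisfies \eqref{comp_rho}, since $\operatorname{Im}(Hce^{u}) = He^{u}\operatorname{Im}c$. With $\rho$ in hand, I define the $1$-form $\mathcal U = \mathcal U_z\,dz + \mathcal U_{\bar z}\,d\bar z$ by the explicit formulas \eqref{eq:UV}. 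By the computation recalled above, the standing hypotheses \eqref{complex_comp1} and \eqref{complex_comp2} together with \eqref{comp_rho} are precisely equivalent to the flatness $d\mathcal U + \mathcal U\wedge\mathcal U = 0$. Since $\mathbb D$ is simply connected, a flat Lie-algebra-valued $1$-form integrates: there is a frame $\mathcal F\colon\mathbb D\to G$, unique up to left multiplication by a constant $g_0\in G$, with $\mathcal F^{-1}d\mathcal F = \mathcal U$. Reading off the columns of $\mathcal F$ and undoing the gauge $D$ of \eqref{eq:mathcalF} recovers a lift $\f=(x,\chi)$ with $\langle x,\chi\rangle = 1$, hence a map into $\CMf$; composing with $\piH$ gives $f\colon\mathbb D\to\DPt$. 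Reversing the defining formulas \eqref{complexP}, \eqref{eq:phi}, \eqref{eq:Q} then shows that the induced metric, para-K\"ahler angle, cubic form and mean curvature form of $f$ are the prescribed data, while $c\neq 0$ (definiteness) guarantees that $f$ is a non-degenerate immersion; the residual freedom $\mathcal F\mapsto g_0\mathcal F$ descends under $\piH$ to the automorphism group of $\DPt$, giving uniqueness up to isometries.

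The hard part will be the control of the structure group in this last step: one must check that the explicitly given $\mathcal U_z,\mathcal U_{\bar z}$ genuinely take values in the Lie algebra $\mathfrak{g}$ of the correct real matrix group $G$, so that integration stays inside $G$ and the reconstructed pair $(x,\chi)$ inherits all the algebraic constraints carried by an actual frame. Concretely, I would verify that the reality condition that the first two columns of $\mathcal F$ are complex conjugate and the third is real, together with the bilinear normalization $\mathcal F^{T}\mathcal F^{*} = \diag(1,1,-1)$ with $\mathcal F^{*} = \mathcal F^{-T}\diag(1,1,-1)$, is infinitesimally preserved by $\mathcal U$; these are the conditions that force $\langle x,\chi\rangle = 1$ and keep $\xi,\eta$ horizontal along the flow. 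Once this is confirmed and the left-translation ambiguity is matched with the isometry group of $\DPt$, the remaining identifications of the invariants are routine reversals of the formulas already established.
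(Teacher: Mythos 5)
Your proposal is correct and takes essentially the same route as the paper: the necessity part is exactly the Maurer--Cartan computation preceding the theorem (flatness of the frame unpacked into \eqref{comp_rho}, \eqref{complex_comp1}, \eqref{complex_comp2}), and the sufficiency part is the integration of the flat form \eqref{eq:UV} on the simply connected domain, which the paper leaves implicit when it ``summarizes the above discussion'' as the theorem. One cosmetic correction: in the gauge \eqref{eq:mathcalF} the third column of $\mathcal F$ is $ix$, hence purely imaginary rather than real, so the structure-group condition to propagate under the flow is not ``third column real'' but $\tau$-fixedness of the frame, i.e.\ that $\Ad(R_H)\mathcal F$ has real entries, as in Lemma \ref{det=1}.
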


%%%%%%%%%%%%%%%%%%%%%%%%%%%%%%%%%%%
\subsection{Lagrangian surface immersions}\label{sbsc:Lag}

In this section we specify our results to the case of Lagrangian surface immersions into $\DPt$. If the immersion $f$ is Lagrangian, then $\omega = 0$, $h = g$, and $C = \nabla g$ is totally symmetric. The converse implication also holds.

{\lemma \label{lem:cubic_sym} Let $f: M^2 \to \DPt$ be a non-degenerate surface immersion with totally symmetric cubic form. Then $f$ is Lagrangian. }

\begin{proof}
The condition that $C$ is totally symmetric is equivalent to the condition $\nabla\omega = 0$. Suppose for the sake of contradiction that $\omega$ does not vanish on some neighbourhood $U \subset M$. Then $\nabla$ preserves a non-trivial volume form on $U$ and is equi-affine. This implies that its Ricci tensor is symmetric \cite[Proposition I.3.1]{NomizuSasaki}. But then \eqref{P_Ricci} implies $\omega = 0$, leading to a contradiction.
\end{proof}

The Lagrangian condition $\omega = 0$, or equivalently $b = 0$, has the implication
\begin{gather*}
c = 1.
\end{gather*}
The compatibility condition \eqref{comp_rho} becomes $\operatorname{Im}
 \partial_{\bar z}\rho = 0$. It  is equivalent to the form $\psi$ to be closed. Since $M$ is 
 simply connected,  there exists a real potential $\upsilon$ such that $\psi= d\upsilon$, or $\rho = \partial_z \upsilon$. By an appropriate scaling of the lift $\f$ of $f$ into $\CMf$ we may choose $\upsilon$ equal to any desired smooth real function. In particular, we may achieve
\[
 \rho = 0
\]
by an appropriate choice of the lift. Such a lift is \emph{horizontal} in the sense of Definition \ref{def:horizontal}. In this case the lift $\f$ defines a dual pair of centro-affine immersions $x,\chi$ into $\mathbb R^3$ and $\mathbb R_3$, respectively, whose metric coincides with $g$ and whose centro-affine connection coincides with $\nabla$ \cite[Theorem 4.1]{Hildebrand11B}. 

Conditions \eqref{complex_comp1} and \eqref{complex_comp2} simplify to
\begin{gather*}
\left\{
\begin{array}{l}
\partial_z \partial_{\bar z} u
-|\phi|^2 +e^{-2 u} |Q|^2  + H e^{u}
 =\partial_{\bar z} \phi- \partial_z \bar  \phi, \\
e^{u} (\partial_{\bar z} \bar \phi - \bar \phi \partial_{\bar z} u )
 + \partial_{\bar z}Q =0.
\end{array}
\right.
\end{gather*}
The left-hand side in the first equation is real, while the right-hand side is imaginary. Hence both sides must equal zero, and we have
\begin{equation}\label{complex_comp123-Lag}
\left\{
\begin{array}{l}
 - \partial_z \bar  \phi +\partial_{\bar z} \phi=0,  \\
\partial_z \partial_{\bar z} u-|\phi|^2+ e^{-2 u} |Q|^2  + H e^{u}
 =   0,  \\
e^{u} (\partial_{\bar z} \bar \phi - \bar \phi \partial_{\bar z} u )
 + \partial_{\bar z}Q =0.
\end{array}
\right.
\end{equation}
The Maurer-Cartan forms \eqref{eq:UV} simplify to
\begin{equation}\label{eq:LagUV}
{\cal U}_z =
\begin{pmatrix}
\frac{1}{2}\partial_z u  + \phi  &
 \bar \phi & iH^{1/2}e^{u/2} \\[0.1cm]
 e^{- u} HQ  & -\frac{1}{2}\partial_z u + \phi & 0 \\ 0 &
 iH^{1/2}  e^{u/2} & 0
\end{pmatrix}, \qquad
{\cal U}_{\bar z} =
\begin{pmatrix}
 - \frac12 \partial_{\bar z} u +\bar \phi & e^{-u}H \bar Q & 0 \\[0.1cm]
\phi &\frac12\partial_{\bar z} u + \bar \phi & iH^{1/2} e^{u/2} \\ iH^{1/2} e^{u/2} & 0 & 0 \end{pmatrix}.
\end{equation}
\begin{remark}
From
$ - \partial_z \bar  \phi +\partial_{\bar z} \phi=0$
the $1$-form $\phi d z + \bar \phi d \bar z$ is closed, or equivalently
the Tchebycheff form $T$ is closed for a Lagrangian immersion $f$.
\end{remark}

\subsection{Minimal surface immersions}\label{sbsc:minimal}
In this section we specify our results to minimal surface immersions into
 $\DPt$.

By Corollary \ref{cor:K_minimal} the immersion $f$ is minimal if and only if the Tchebycheff form $T$ vanishes if and only if the function $\phi$ vanishes.
Setting $\phi = 0$ in the compatibility condition \eqref{complex_comp2} gives
 $\partial_{\bar z} Q = 0$, and $Q$ is a holomorphic function. Setting $\phi = 0$ in \eqref{complex_comp1} gives
\[
- |c|^{-2} e^{-2 u} |Q|^2 - \partial_z \partial_{\bar z} u  =
\partial_z \partial_{\bar z} (\log c) -H (\bar c- 2 c) e^{u}.
\]
The left-hand side of the equation is real, and so must be the right-hand side.
Therefore we have an additional equation
 for the para-K\"{a}hler angle $\theta$ in \eqref{eq:angle},
\[
\partial_z \partial_{\bar z}\theta
= 3 H e^{u}\cot \theta.
\]
 The Maurer-Cartan form can be simplified to
\begin{equation}\label{eq:MinUV}
\left\{
\begin{array}{l}
{\cal U}_z =
\begin{pmatrix}
\rho+\frac{1}{2}\partial_z u +\frac12\partial_z \log c &
 0 & i(Hc)^{1/2}e^{u/2} \\[0.1cm]
|c|^{-1} e^{- u} H Q  & \rho-\frac{1}{2}\partial_z u -\frac12\partial_z \log \bar c & 0 \\ 0 &
 i(H \bar c)^{1/2}  e^{u/2} & \rho
\end{pmatrix}, \\[0.8cm]

{\cal U}_{\bar z} =
\begin{pmatrix}
 \bar \rho- \frac12 \partial_{\bar z} u - \frac12\partial_{\bar z} \log c & |c|^{-1}e^{-u}H \bar Q & 0 \\[0.1cm]
0 &\bar \rho  + \frac12\partial_{\bar z} u+ \frac12 \partial_{\bar z} \log \bar c  & i(H \bar c)^{1/2} e^{u/2} \\ i(H c)^{1/2} e^{u/2} & 0 & \bar \rho \end{pmatrix}.
\end{array}
\right.
\end{equation}

\subsection{Minimal Lagrangian surface immersions}
Combining Section \ref{sbsc:Lag} and Section \ref{sbsc:minimal},
we obtain the following equations for a definite
 minimal Lagrangian surface immersion in $\DPt$:
\begin{gather}\label{eq:minLagstr}
\left\{
\begin{array}{l}
\partial_z \partial_{\bar z} u +e^{-2 u} |Q|^2  + H e^{u}
 =0, \\
\partial_{\bar z}Q =0.
\end{array}
\right.
\end{gather}
 Moreover, the Maurer-Cartan form can be simplified to
\begin{equation}\label{eq:minLagUV}
{\cal U}_z =
\begin{pmatrix}
\frac{1}{2}\partial_z u    & 0 & i H^{1/2}e^{u/2} \\[0.1cm]
 e^{- u} HQ  & -\frac{1}{2}\partial_z u  & 0 \\ 0 &
 i H^{1/2}  e^{u/2} & 0
\end{pmatrix}, \qquad
{\cal U}_{\bar z} =
\begin{pmatrix}
 - \frac12 \partial_{\bar z} u  & e^{-u}H \bar Q & 0 \\[0.1cm]
0 &\frac12\partial_{\bar z} u  & i H^{1/2} e^{u/2} \\ iH^{1/2} e^{u/2} & 0 & 0 \end{pmatrix}.
\end{equation}
 The first equation in \eqref{eq:minLagstr} is known as
the \textit{Tzitz\'{e}ica equation} and the Maurer-Cartan form
is identical to that of \cite[Section 4.4 with
 the spectral parameter $\lambda = \pm 1$]{DFKW19}. Therefore it is easy to see
 that
\begin{theorem}
 A  definite minimal Lagrangian immersion in $\DPt$
 defines a definite proper affine sphere
 in $\R^3$ and vice versa.
\end{theorem}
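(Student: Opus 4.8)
The plan is to route the equivalence through the horizontal lift and the centro-affine interpretation already available to us, and then to match the resulting structure equations and frame with those of a proper affine sphere. First I would fix, for a given definite minimal Lagrangian immersion $f: M^2 \to \DPt$, the horizontal lift $\f = (x,\chi)$ with $\rho = 0$ constructed in Section \ref{sbsc:Lag}. By \cite[Theorem 4.1]{Hildebrand11B} the component $x: M^2 \to \R^3$ is then a centro-affine immersion whose centro-affine metric equals the induced metric $g = 2He^{u}|dz|^2$ and whose centro-affine connection equals $\nabla$. The Lagrangian hypothesis forces $c \equiv 1$, i.e.\ $\theta \equiv \pi/2$ in \eqref{eq:angle}, and minimality forces $\phi \equiv 0$; by Lemma \ref{lem:cubic_sym} and Corollary \ref{cor:K_minimal} this says exactly that the cubic form $C$ is totally symmetric and trace-free with respect to $g$.

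These two conditions, symmetry and apolarity of the centro-affine cubic form, are precisely the classical characterization of a centro-affine surface as a proper affine sphere with center at the origin: the centro-affine normalization then agrees with the Blaschke normalization, whose shape operator is a nonzero constant multiple of the identity, and the position vector $x$ is (up to scale) the affine normal. The definiteness of $g$ transfers to definiteness of the affine metric, while the constant $H = \pm 1$ distinguishes the elliptic and hyperbolic cases, both of which are proper. I would make this rigorous at the level of integrability data: reducing \eqref{complex_comp1}--\eqref{complex_comp2} to the minimal Lagrangian situation yields exactly \eqref{eq:minLagstr}, namely the Tzitz\'eica equation $\partial_z\partial_{\bar z}u + e^{-2u}|Q|^2 + He^{u} = 0$ together with $\partial_{\bar z}Q = 0$, so that $Q\,dz^3$ is holomorphic. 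These are identically the Gauss--Codazzi equations of a definite proper affine sphere written in a conformal parameter for the affine metric, with $Q\,dz^3$ the Pick cubic differential; correspondingly the Maurer--Cartan form \eqref{eq:minLagUV} coincides with the affine-sphere frame equation of \cite[Section 4.4, with $\lambda = \pm 1$]{DFKW19}. This gives one direction: from $f$ we obtain data $(u,Q)$ solving the affine-sphere system and a frame $\mathcal F$ solving the affine-sphere equations, whose distinguished column recovers the affine sphere $x$.

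For the converse I would start from a definite proper affine sphere $x: M^2 \to \R^3$ with center at the origin, extract its affine metric $g = 2He^{u}|dz|^2$ in a conformal parameter and its holomorphic Pick differential $Q\,dz^3$; these satisfy \eqref{eq:minLagstr}. Setting $\phi \equiv 0$ and $\theta \equiv \pi/2$ (so $c \equiv 1$) and feeding $(u,\theta,\phi,Q)$ into the fundamental theorem, Theorem \ref{thm:FTS}, produces an immersion $f: \mathbb D \to \DPt$, unique up to isometries of $\DPt$, whose invariants are these data; since $c \equiv 1$ this immersion is Lagrangian and since $\phi \equiv 0$ it is minimal. The two constructions are mutually inverse up to the respective symmetry groups, because both objects are governed by the same Cauchy data $(u,Q)$ and, via \eqref{eq:minLagUV}, by the same Maurer--Cartan form.

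The main obstacle is the bookkeeping in the bridge between the two normalizations. One must verify that the centro-affine conormal $\chi$ furnished by the lift is exactly the affine (Lelieuvre) conormal field of $x$, and that all sign conventions match so that the proper, rather than improper, character and the elliptic/hyperbolic dichotomy are transported correctly by $H = \pm 1$ and $\lambda = \pm 1$. Once the identification of $Q\,dz^3$ with the Pick differential and of $g$ with the affine metric is pinned down, the coincidence of \eqref{eq:minLagUV} with the frame equations of \cite{DFKW19}, together with $\operatorname{Tr}_g C = 0$ and the total symmetry of $C$, does the remaining work.
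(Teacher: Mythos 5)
Your proposal is correct and takes essentially the same route as the paper: the paper's proof consists precisely of observing that the minimal Lagrangian specialization yields the Tzitz\'eica system \eqref{eq:minLagstr} and that the Maurer--Cartan form \eqref{eq:minLagUV} coincides with the affine-sphere frame of \cite[Section 4.4]{DFKW19} at $\lambda = \pm 1$, with the centro-affine interpretation of the horizontal ($\rho = 0$) lift via \cite[Theorem 4.1]{Hildebrand11B} already set up in Section \ref{sbsc:Lag}. Your additional scaffolding --- the apolarity/total-symmetry characterization of proper affine spheres centered at the origin, and the converse via Theorem \ref{thm:FTS} --- simply spells out details the paper compresses into ``it is easy to see.''
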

\section{Primitive maps and immersions with special properties}\label{sc:alg}
 In this section we characterize surface immersions in $\DPt$ with special properties
 (minimal, Lagrangian or minimal Lagrangian surfaces) in terms of
 \textit{primitive harmonic maps}. Since the results in this section are an adaptation
 the results of \cite{DoKoMa19} to the case of surface immersions into $\DPt$,
 we will omit detailed proofs, and refer to Appendix \ref{app:ksymmetric}.

\subsection{The real form $\tau$}
 It is easy to see that the determinant of the moving frame $\mathcal F$ in
 \eqref{eq:mathcalF} can be computed as
\[
 \det \mathcal F  =i H^{-1}|c|^{-1} e^{-u} \det \widetilde {\mathcal F}
 = 2 H^{-1}|c|^{-1} e^{-u} \det (\xi_1, \xi_2, x),
\]
 where $\xi_1$ and $\xi_2$ are real-valued vectors as in \eqref{horizontal_forms}.
 Therefore $\det \mathcal F$ takes values in $i \mathbb R^{\times}$.
 Let us  denote $\det \mathcal F$ by $\delta$ with
 a non-vanishing real function $\delta$. Then,
 it is also easy to see that $\det \mathcal F^* =  \delta^{-1}$.

 As discussed in Remark \ref{rm:choiceoflift}, if we change a lift $(x, \chi)$ to
 $(\delta^{1/3} x, \delta^{-1/3} \chi)$, then the real-vectors $\xi_1, \xi_2$,
 $\eta_1$, and $\eta_2$ change accordingly to $\delta^{1/3}\xi_1, \delta^{1/3}\xi_2$,
 $\delta^{-1/3}\eta_1$, and $\delta^{-1/3}\eta_2$, respectively.
 Then $\det \mathcal F = \det \mathcal F^* = 1$ in this
 particular lift. Therefore we have the following.
\begin{lemma}\label{det=1}
 Choosing the initial condition
 of $\mathcal F$ and $\mathcal F^*$ properly, the gauged moving frames
\begin{equation}\label{eq:G}
 \Ad (R_H) (\mathcal F)  \quad \mbox{and}\quad \Ad (R_H^{-T})(\mathcal F^*) , \quad \mbox{with}\quad
 R_H=\begin{pmatrix}
\frac1{\sqrt{2}} & \frac{1}{\sqrt{
2}}&0 \\
\frac{i}{\sqrt{2}}&-\frac{i}{\sqrt{2}}&0 \\
 0&0& \sqrt{-H}
 \end{pmatrix},
\end{equation}
 take values in $\mathrm{SL}_3 \mathbb R$.
 \end{lemma}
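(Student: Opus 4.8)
The plan is to realize $\mathrm{SL}_3\mathbb{R}$ as the fixed-point group of an antiholomorphic involution of $\mathrm{SL}_3\mathbb{C}$, to show that the Maurer--Cartan form \eqref{eq:UV} becomes invariant under ordinary complex conjugation after the gauge by $R_H$, and to fix the remaining freedom (the initial condition and the overall scaling) so that the gauged frames are honestly real and unimodular.

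First I would secure unimodularity. From $\mathcal F^{T}\mathcal F^{*}=D\widetilde D D=\diag(1,1,-1)$ one reads off that $\det\mathcal F\cdot\det\mathcal F^{*}$ is constant (indeed $=-1$) and, as already noted, that $\det\mathcal F=\delta$ is a nowhere vanishing real function. The rescaling $(x,\chi)\mapsto(\delta^{-1/3}x,\delta^{1/3}\chi)$ is by a real function, multiplies each column of $\mathcal F$ by $\delta^{-1/3}$, and therefore normalizes $\det\mathcal F\equiv 1$; after it $\det\mathcal F$ and $\det\mathcal F^{*}$ are both constant, so the two Maurer--Cartan forms $\mathcal U,\mathcal U^{*}$ are trace-free, i.e.\ take values in $\mathfrak{sl}_3\mathbb{C}$. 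This is what ``choosing the initial condition properly'' refers to.

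The core of the argument is the reality. Because the first two columns of $\widetilde{\mathcal F}=(\xi,\bar\xi,x)$ are complex conjugate, the third is real, and $D=\diag(d_1,\bar d_1,i)$, a short computation (using $\overline D=\diag(\bar d_1,d_1,-i)$ and the column swap produced by conjugation) gives the twisted reality relation $\overline{\mathcal F}=\mathcal F P$ with
\[
P=\begin{pmatrix}0&1&0\\1&0&0\\0&0&-1\end{pmatrix},
\]
the corner $-1$ being exactly $\overline i/i$; the same holds for $\mathcal F^{*}$. The matrix $R_H$ is then chosen precisely so that $\overline{R_H}=R_H P$, equivalently $R_H^{-1}\overline{R_H}=P$. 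Granting this, the gauged frame is the right translate $\mathcal G:=\mathcal F R_H^{-1}$, whose Maurer--Cartan form is $\mathcal G^{-1}d\mathcal G=\Ad(R_H)\,\mathcal U=R_H\mathcal U R_H^{-1}$, so $\mathcal G$ is the object denoted $\Ad(R_H)(\mathcal F)$. Its reality follows from $\overline{\mathcal G}=\overline{\mathcal F}\,\overline{R_H}^{-1}=\mathcal F P(R_H P)^{-1}=\mathcal F R_H^{-1}=\mathcal G$, using $P^2=\id$; equivalently one checks directly that the gauged form is conjugation-invariant, i.e.\ $P\,\overline{\mathcal U_z}\,P=\mathcal U_{\bar z}$ for the matrices of \eqref{eq:UV}. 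The dual frame transforms contragrediently via $\mathcal F^{*}=\mathcal F^{-T}\diag(1,1,-1)$, which is why the gauge there is by $R_H^{-T}$, and the transpose of the identity $\overline{R_H}=R_H P$ makes $\Ad(R_H^{-T})(\mathcal F^{*})$ real as well; taking the dual gauged frame to be the contragredient $\mathcal G^{-T}$ keeps it in $\mathrm{SL}_3\mathbb{R}$.

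The main obstacle is the bookkeeping hidden in ``$R_H$ is chosen so that $\overline{R_H}=R_H P$''. On the upper $2\times2$ block this is immediate, and in the Maurer--Cartan form the potentially dangerous entries are the off-diagonal $(2,3)$ and $(3,1)$ terms $i(H\bar c)^{1/2}e^{u/2}$ and $i(Hc)^{1/2}e^{u/2}$: the sign flip carried by the third row and column of $P$ is exactly what makes $P\,\overline{\mathcal U_z}\,P=\mathcal U_{\bar z}$ hold there, so this is the step I would verify most carefully. The remaining delicate point is the third diagonal entry of $R_H$: the relation $\overline{R_H}=R_H P$ forces it to be purely imaginary, which constrains the branch of $\sqrt{-H}$, and its value together with the factor $\diag(1,1,-1)$ must be reconciled with $\det\mathcal G=\det\mathcal G^{*}=1$ in both the elliptic ($H=1$) and hyperbolic ($H=-1$) cases; this signature-dependent sign and branch bookkeeping is where the real work lies.
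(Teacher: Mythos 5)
Your determinant step is sound and matches the paper's: indeed $\mathcal F^T\mathcal F^*=D\widetilde D D=\diag(1,1,-1)$, so $\det\mathcal F\cdot\det\mathcal F^*\equiv-1$, the lift rescaling by $\delta^{-1/3}$ normalizes $\det\mathcal F\equiv 1$, and the Maurer--Cartan forms become trace-free (you even catch the sign slip in the paper's claim $\det\mathcal F^*=\delta^{-1}$). The gap is in the reality step, and it is not mere bookkeeping: your pivotal identity $\overline{R_H}=R_HP$, with $P$ having corner entry $-1$, forces the entry $\sqrt{-H}$ to be purely imaginary, as you yourself observe. For $H=1$ this holds, but for $H=-1$ both square roots of $-H=1$ are \emph{real}, so no choice of branch can rescue the identity. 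The relation that is valid for both signs is
\begin{equation*}
R_H^{-1}\,\overline{R_H}\;=\;P_H\;=\;\begin{pmatrix}0&1&0\\1&0&0\\0&0&-H\end{pmatrix},
\end{equation*}
whose corner is $-H$, not $-1$; this $P_H$ is exactly the matrix in the involution $\tau$ of \eqref{eq:tau}. Consequently the conjugation matrices of $\mathcal F$ and of $R_H$ differ by $\diag(1,1,-H)$, and the cancellation in $\overline{\mathcal G}=\overline{\mathcal F}\,\overline{R_H}^{-1}$ breaks down when $H=-1$: with the coherent branch choice $(H\bar c)^{1/2}=H\,\overline{(Hc)^{1/2}}$ (the one under which \eqref{eq:UV} satisfies $P_H\overline{\mathcal U_z}P_H=\mathcal U_{\bar z}$) one finds $\overline{\mathcal F}=H\,\mathcal F P_H$ and hence $\overline{\mathcal F R_H^{-1}}=H\,\mathcal F R_H^{-1}$, so your gauged frame is real in the elliptic case but \emph{purely imaginary} in the hyperbolic case. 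Since the lemma expressly covers negative definite metrics, your argument as written proves only half of it.

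The repair shows why the lemma speaks of choosing initial conditions, and also where your reading of $\Ad$ needs care: in the paper $\Ad(R_H)(\mathcal F)$ is the conjugate $R_H\mathcal F R_H^{-1}$ (this is forced by the definition \eqref{eq:SLRt} of $\SLR$), and your right translate is $\mathcal F R_H^{-1}=\Ad(R_H)(R_H^{-1}\mathcal F)$, i.e.\ conjugation of the frame whose ``initial condition'' has been shifted by the constant $R_H^{-1}$. For $H=1$ that constant works; for $H=-1$ one needs a different one, e.g.\ $C=-iR_H$ (which has $\det C=1$): then $\mathcal F_g=C^{-1}\mathcal F$ satisfies $\tau(\mathcal F_g)=\mathcal F_g$ and $\Ad(R_H)(\mathcal F_g)=i\,\mathcal F R_H^{-1}$ is real with determinant $1$ --- the factor $i$ is precisely what your computation is missing. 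Alternatively, the paper's (largely implicit) route avoids the matrix gymnastics on the frame altogether: after the determinant normalization the form $\mathcal U_z\,dz+\mathcal U_{\bar z}\,d\bar z$ takes values in $\slr=\Fi(\sl,\tau)$, $\tau(X)=\Ad(P_H)\bar X$, and uniqueness of solutions of $d\mathcal F=\mathcal F\,(\mathcal U_z\,dz+\mathcal U_{\bar z}\,d\bar z)$ shows that a frame with initial value in $\SLR$ stays in $\SLR$ for either sign of $H$; reality of $\Ad(R_H)(\mathcal F)$ is then the definition of $\SLR$. Finally, your observation that $\det\mathcal F\cdot\det\mathcal F^*=-1$ really does mean that $\Ad(R_H^{-T})(\mathcal F^*)$ can only have determinant $-1$ once $\det\mathcal F=1$; replacing it by the contragredient $\mathcal G^{-T}$, as you suggest, is a legitimate fix, but it should be stated as a correction to the lemma rather than absorbed silently.
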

\begin{remark}\label{rm:rho}
 The function $\rho$ in \eqref{eq:rho} of the lift $\f$
 such that $\det \mathcal F = 1$ cannot satisfy \eqref{eq:rho0} in general.
 In the following, we normalize a lift $\f$ such that the frame $\mathcal F$ satisfies
 $\det \mathcal F =1$, and we do not assume \eqref{eq:rho0}.
\end{remark}
We define a real
 Lie group
\begin{equation}\label{eq:SLRt}
 \left\{
 A
 \mid
 \Ad (R_H) (A) \in \mathrm{SL}_3 \mathbb R, \;\; \mbox{where $R_H$ is defined in \eqref{eq:G}}
\right\},
\end{equation}
 which is isomorphic to the standard $\mathrm{SL}_3 \mathbb R$,
 and we denote it  by $\SLR$. More explicitly, the Lie group
 $\SLR$ in \eqref{eq:SLRt} can
 be represented by
\begin{equation}\label{eq:SLR}
 \SLR=\left\{
 A = \begin{pmatrix}  a & b & \sqrt{-H} c \\ \bar b & \bar a & \sqrt{-H} \bar c \\
 \sqrt{-H} d &   \sqrt{-H} \bar d & e  \end{pmatrix}\;\Big|\;
 a, b, c ,d, e \in \C\;\;\mbox{and $\det  A =1$}
\right\}.
\end{equation}
 The Lie algebra of the above $\SLR$,
 which is isomorphic to
 the standard Lie algebra $\slrStan$, 
 can be represented by
\[
 \slr=\left\{ A = \begin{pmatrix}  a & b & \sqrt{-H} c \\ \bar b & \bar a & \sqrt{-H} \bar c \\
 \sqrt{-H} d & \sqrt{-H} \bar d & e  \end{pmatrix}\;\Big|\;
 a, b, c ,d, e \in \C\;\;\mbox{and $\tr  A =0$}
\right\}.
\]
 Therefore
 without loss of generality the moving frame $\mathcal F$ (and
 $\mathcal F^*$) of an immersion $f: M^2 \to\DPt$
 takes values in $\SLR$. In the following
 consideration, we always assume this.
 Moreover, one can think $\mathfrak g^{\R} = \slr$ as the real form of $\mathfrak g = \sl$
 given by the anti-linear involution
\begin{equation}\label{eq:tau}
 \tau (X) =\Ad(P_H) \bar X, \quad X \in \sl,
 \quad  P_H =
\begin{pmatrix}
 0 & 1 & 0 \\
 1 & 0 & 0 \\
 0 & 0 & -H
\end{pmatrix}.
\end{equation}
 Note that $P_H$ is given by
\[
R_H^T  R_H = P_H.
\]
 We consider the anti-linear involution $\tau^G$ on
 the group level $G = \SL$ as
\begin{equation}\label{eq:tauonG0}
 \tau^G (g) = \Ad(P_H)(\bar g), \quad g \in \SL.
\end{equation}
 By abuse of notation we will also write $\tau^{G}$ by $\tau$.

\subsection{Primitive maps and immersions with special properties}
 We now consider the order $6$ outer automorphism
 $\sigma$ on $\sl$ is given by
\begin{equation} \label{defsigma}
\sigma_H ( X) =   - P_H^{\epsilon} X^T P_H^{\epsilon},  \quad \mbox{where} \quad
P_H^{\epsilon} =
\begin{pmatrix}
0& \epsilon^2 & 0\\
\epsilon^4& 0 & 0\\
0&0&-H\\
\end{pmatrix}
\end{equation}
 with $\epsilon = e^{\frac{ i\pi}{3}}$. Note that
\[
P_H^{\epsilon} = \diag (\epsilon^2, \epsilon^4, 1) P_H,
\]
and $\sigma_H$ with $H=-1$ has been used in \cite{DoKoMa19}, \cite{DFKW19}, \cite{DK21}. It is easy to see that $\sigma_H$ commutes with
$\tau$ and thus $\sigma_H$ defines a $k$-symmetric space with $k=6$, see
Definition \ref{def:k-symmetric}.
Moreover, instead of $\sigma_H$, one can use $\sigma_H^2$ and
 $\sigma_H^3$, which  are the order $3$ and $2$ automorphisms on $\sl$,
and there are corresponding $k$-symmetric spaces with $k=3$ and $2$,
respectively.
 Then one can introduce the \textit{primitive harmonic} into a
 $k$-symmetric space relative to $\sigma_H$, $\sigma_H^2$ and $\sigma_H^3$,
 respectively, see Definition \ref{def:primitiveharmonic}.
 The following characterizations of surface immersions with special
 properties in terms of primitive harmonic maps are
 verbatim to the case of those of $\mathbb C P^2$, \cite[Theorem 2.4]{DoKoMa19}, thus we will omit  the proof.
\begin{theorem} \label{equivprimitive}
Let $G = \SL$ and $\mathfrak{g} = \sl$ its Lie algebra.
Let $\tau$ denote the real form involution of $G$ singling out
 $G^{\R}=\SLR$ in $G$
and let $\sigma_H = \sigma_H^G$ be the automorphism of order $6$ of $G$ given by
$\sigma_H (g) =   P_H^{\epsilon} (g^{T})^{-1} P_H^{\epsilon}$  in  \eqref{defsigma}.
Assume moreover, that $\f$ is the lift of  a liftable immersion $f$ into $\DPt$ and with frame  $\mathcal F$ in $G^{\R}$.
Then the following statements hold:
\begin{enumerate}
\setlength{\itemsep}{0cm}
\renewcommand{\labelenumi}{(\arabic{enumi})}
 \item[{\rm (1)}] $\mathcal F$ is primitive harmonic relative to $\sigma_H$ if and only if $f$ is minimal Lagrangian in $\DPt$.

\item[{\rm (2)}] $\mathcal F$ is primitive harmonic relative to $\sigma_H^2$
 if and only if $f$ is minimal in $\DPt$.

\item[{\rm (3)}] $\mathcal F$ is primitive harmonic relative to $\sigma_H^3$ if and only if either $f$ is minimal Lagrangian or  $f$ is flat homogeneous in $\DPt$.
\end{enumerate}
\end{theorem}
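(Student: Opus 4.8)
The plan is to reduce all three equivalences to purely algebraic conditions on the graded components of the Maurer--Cartan form $\mathcal F^{-1}d\mathcal F = \mathcal U_z\,dz + \mathcal U_{\bar z}\,d\bar z$ already computed in \eqref{eq:UV}. First I would record the eigenspace decompositions $\sl = \bigoplus_{j}\mathfrak g_j$ for the three automorphisms $\sigma_H,\sigma_H^2,\sigma_H^3$ of \eqref{defsigma}. Since $(P_H^\epsilon)^2 = \id$ one checks that $\sigma_H^2 = \Ad(\diag(\epsilon^2,\epsilon^4,1)^{-1})$ is inner, so its order-three grading is immediate: the diagonal is $\mathfrak g_0$, while $\{E_{12},E_{23},E_{31}\}$ and $\{E_{21},E_{32},E_{13}\}$ span the two non-trivial eigenspaces. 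The order-six grading of $\sigma_H$ refines this, splitting the Cartan as $E_{11}-E_{22}\in\mathfrak g_0$ and $E_{11}+E_{22}-2E_{33}\in\mathfrak g_3$ and pairing the remaining root vectors into the one-dimensional eigenlines $E_{13}\pm E_{32}$, $E_{23}\pm E_{31}$ (with eigenvalues $\pm H\epsilon^2$, $\pm H\epsilon^4$). The order-two grading of $\sigma_H^3$ is then the even/odd coarsening $\mathfrak k=\mathfrak g_0\oplus\mathfrak g_2\oplus\mathfrak g_4$, $\mathfrak p=\mathfrak g_1\oplus\mathfrak g_3\oplus\mathfrak g_5$. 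Throughout I use that $\tau$ commutes with $\sigma_H$ and is anti-linear, whence $\tau(\mathfrak g_j)=\mathfrak g_{-j}$.

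For statements (1) and (2), where $k\geq 3$, Definition \ref{def:primitiveharmonic} amounts to requiring that the $(1,0)$-part $\mathcal U_z$ take values in $\mathfrak g_0\oplus\mathfrak g_{-1}$, the $(0,1)$-part being then automatically in $\mathfrak g_0\oplus\mathfrak g_{1}$ by the reality property $\tau(\mathfrak g_j)=\mathfrak g_{-j}$. I would substitute the specialized forms \eqref{eq:MinUV} and \eqref{eq:minLagUV} and read off the grades. For $\sigma_H^2$ the eigenspace $\mathfrak g_{-1}$ contains all of $E_{21},E_{13},E_{32}$, so the off-diagonal part of $\mathcal U_z$ lands in $\mathfrak g_{-1}$ exactly when its $E_{12}$-entry $|c|^{-1}\bar\phi$ vanishes, i.e.\ when $\phi\equiv 0$; by Corollary \ref{cor:K_minimal} this is precisely minimality, valid for either sign of $H$ and with no constraint on the para-K\"ahler angle, giving (2). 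For $\sigma_H$ the structure is finer: the off-diagonal entries of \eqref{eq:minLagUV} are proportional to $E_{21}$ and to $E_{13}+E_{32}$, which one checks lie in a single eigenspace $\mathfrak g_{-1}$, whereas a non-zero $\phi$ would produce an $E_{12}\in\mathfrak g_1$ term and a non-zero $b$ would split $E_{13},E_{32}$ across two eigenlines. Hence $\mathcal U_z\in\mathfrak g_0\oplus\mathfrak g_{-1}$ is equivalent to $\phi=0$ and $b=0$, i.e.\ to $f$ being minimal Lagrangian by Lemma \ref{lem:cubic_sym}. A point that must be verified here is that the $\det\mathcal F=1$ normalization of Remark \ref{rm:rho} creates no spurious $\mathfrak g_3$-component: tracelessness of $\mathcal U_z$ fixes $\rho$, and in the minimal Lagrangian case $c=1,\phi=0$ it forces $\rho=0$, so the $(E_{11}+E_{22}-2E_{33})$-part indeed drops out.

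Statement (3) is the genuinely harder case, since $\sigma_H^3$ is an involution and primitive harmonicity into the symmetric space $G/G_0$ is ordinary harmonicity, which is \emph{not} a single-grade condition. Here I would write $\mathcal U_z=\mathcal U_{\mathfrak k,z}+\mathcal U_{\mathfrak p,z}$ and use the loop characterization: $[\mathcal F]$ is harmonic iff $\alpha_\lambda=\alpha_{\mathfrak k}+\lambda^{-1}\mathcal U_{\mathfrak p,z}\,dz+\lambda\,\mathcal U_{\mathfrak p,\bar z}\,d\bar z$ is flat for all $\lambda\in\mathbb C^{\times}$. As $\alpha=\mathcal F^{-1}d\mathcal F$ is already flat, the only new content is the $\lambda^{-1}$-coefficient $d\big(\mathcal U_{\mathfrak p,z}\,dz\big)+[\alpha_{\mathfrak k}\wedge\mathcal U_{\mathfrak p,z}\,dz]=0$ and its conjugate. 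I expect this to be the principal obstacle: one must feed the compatibility equations \eqref{comp_rho}, \eqref{complex_comp1} and \eqref{complex_comp2} into this $\mathfrak p$-harmonic equation and show that it forces either $\phi=0$ and $b=0$ (the minimal Lagrangian branch, in which the $\mathfrak g_1$- and $\mathfrak g_3$-contributions vanish) or else a rigid configuration in which the induced metric has vanishing Gauss curvature and all invariants are constant, namely the flat homogeneous surfaces. Isolating this second branch is the crux: one argues that as soon as $\phi\neq 0$ or $b\neq 0$, the harmonic equation together with \eqref{complex_comp1}--\eqref{complex_comp2} rigidifies $u,Q,\phi$ and $\theta$ to the constant data of the homogeneous flat examples.

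Finally, each equivalence carries a converse, obtained by reversing the computation: starting from a primitive harmonic $\mathcal F$ with values in $G^{\mathbb R}=\SLR$, the graded form of its Maurer--Cartan form is constrained to reproduce one of the normal forms \eqref{eq:MinUV}, \eqref{eq:minLagUV} (or the flat-homogeneous shape), and Theorem \ref{thm:FTS} then yields an immersion with the asserted property, unique up to isometry. Since all of these manipulations are formally identical to the $\CP$ computation, at this stage I would invoke the verbatim correspondence with \cite[Theorem 2.4]{DoKoMa19} rather than repeat the algebra.
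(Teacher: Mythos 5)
Your plan --- read parts (1) and (2) off the grading of the Maurer--Cartan form \eqref{eq:UV}, and reduce (3) to the harmonicity equation for the $\sigma_H^3$-splitting --- is the right skeleton, and it is essentially what the proof omitted by the paper would contain (the paper's entire proof is the assertion that the statement is ``verbatim'' to \cite[Theorem 2.4]{DoKoMa19}). But two steps of your proposal do not go through as written. First, in part (1) you assert that the off-diagonal terms of \eqref{eq:minLagUV}, the multiples of $E_{21}$ and of $E_{13}+E_{32}$, ``lie in a single eigenspace $\mathfrak{g}_{-1}$''. This contradicts the eigenvalues you yourself recorded: $E_{21}$ has $\sigma_H$-eigenvalue $\epsilon^{-1}=\epsilon^{5}$ for either sign of $H$, while $E_{13}+E_{32}$ has eigenvalue $H\epsilon^{2}$, and these coincide only when $H=-1$. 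For $H=+1$ one finds $E_{13}+E_{32}\in\mathfrak{g}_{2}$, so the minimal Lagrangian frame is \emph{not} primitive relative to $\sigma_H$ as literally defined in \eqref{defsigma}; moreover no constant diagonal gauge $k=\operatorname{diag}(k_{1},\bar k_{1}^{\phantom{1}},k_{3})$ inside $\SLR$ can repair the relative sign of the $(1,3)$- and $(3,2)$-entries, since that would force $k_{3}^{2}=-|k_{1}|^{2}$. So the elliptic case requires a modified order-six automorphism (e.g.\ the $H$-independent matrix with $(3,3)$-entry $+1$, which also commutes with $\tau$); your argument, like the paper's appeal to the $H=-1$ conventions of \cite{DoKoMa19}, does not address this. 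Part (2), by contrast, is correct for both signs, since $E_{21},E_{13},E_{32}$ all lie in one eigenspace of $\sigma_H^{2}$ regardless of $H$, and ``primitive iff $\phi=0$ iff minimal'' follows.

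Second, part (3) is not proved. Your reduction of $\sigma_H^3$-primitive harmonicity to the vanishing of the $\lambda^{-1}$-coefficient, i.e.\ the $\bar\partial$-equation for the $\mathfrak{p}$-valued part of the Maurer--Cartan form, is correct, but the entire dichotomy --- that this equation combined with \eqref{complex_comp1}--\eqref{complex_comp2} forces either $\phi=0$ and $b=0$, or constant ``flat homogeneous'' data --- is precisely the claim needing proof, and your text replaces the argument with ``one argues that \dots''. The converse inclusion of the second branch, namely that flat homogeneous immersions do have $\sigma_H^3$-primitive harmonic frames, is not addressed at all. Since parts (1) and (2) are eigenvalue bookkeeping, this dichotomy is the only substantive analytic content of the theorem; deferring it, as you ultimately do, to the ``verbatim correspondence'' with \cite[Theorem 2.4]{DoKoMa19} leaves your proposal exactly as incomplete as the paper's own one-line citation, so as a self-contained argument it has a hole at the crux.
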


%%%%%%%%%%%%%%%%%%%%%
\section{Ruh-Vilms type theorems}\label{sc:Ruh-Vilms}
 In the following sections, we use the para-hermitian inner product
 of the $3$-dimensional  para-complex vector space $\PC^3$
 with a para-Hermitian form
\begin{equation}\label{eq:hinn}
 \langle u, v \rangle_h = u^{*T} P_H v, \quad
\end{equation}
 where $*$ denotes the para-complex conjugate of a paracomplex vector in $\PC^3$, and $P_H$ is defined in \eqref{eq:tau},
 see also Appendix \ref{subsubsc:para}.

\begin{remark}
 The para-Hermitian form  \eqref{eq:hinn}
 is invariant under the Lie group $\SLR$ defined in \eqref{eq:SLR}.
 It is different from the standard para-Hermitian inner product in
 \eqref{eq:para-inner}, but they are isomorphic.
 The para-Hermitian form in \eqref{eq:hinn}
 is suitable for Ruh-Vilms type theorems.
\end{remark}

 The $3$-dimensional  para-complex vector space $\PC^3$ is
 a symplectic vector space with the symplectic form
 $\omega = -\Im \langle \;,\;  \rangle_h$.

 In \cite[Section 3]{DoKoMa19},
 three $6$-symmetric spaces of dimension $7$ which are bundles
 over $S^5$ were defined, which were $FL_1$, $FL_2$ and
 $FL_3$. We will analogously define bundles over $\CMf$, which will
 be denoted by $FL_1^H, FL_2^H$ and  $FL_3^H$, respectively.
 A detailed construction can be found in Appendix \ref{app:bundles}.

 \subsection{Projections from various bundles}
 A family of (real) oriented Lagrangian subspaces of  $\PC^3$
 forms a submanifold of the manifold of real Grassmannian $3$-spaces of $\PC^3$,
 which will be called the \emph{Grassmannian manifold} of
 oriented Lagrangian subspaces and will be denoted by
 $\LGr(3, \PC^3)$.  It is easy to see that
 $\LGr(3, \PC^3)$ can be represented as the homogeneous space
 $\GLR / \O$. In particular the orbit of $\SLR$ through the point $e \in \SO$ will be
 called the \textit{special Lagrangian Grassmannian} and it will be denoted by
 $\SLGr(3, \PC^3)$. It is also easy to see that it can be represented as
 a homogeneous space
\[
\SLGr(3, \PC^3)= \SLR/\SO,
\]
 see Proposition \ref{Prp:Grass}.
 We now define two bundles over $\CMf$:
\begin{align*}
FL_1^H &= \{ (v,V)\mid  v \in \CMf, \; v \in V, \;
 V  \in  \SLGr(3, \PC^3)\}, \\
FL_2^H &=
 \left\{ (w,\mathcal{W})\;\big|\;
\begin{array}{l}
\text{$w \in \CMf$, $\mathcal W$ is
 a special regular para-complex}  \\
\text{ flag over $w$ in $\PC^3$ satisfying $W_1 = \PC w$}
\end{array}
 \right\}.
\end{align*}
 Moreover, we define
\begin{align*}
 FL_3^H &= \left\{ U P_H^{\epsilon} \;U^T\;\Big|\; \mbox{$U \in \SLR$ and $P
 =\begin{pmatrix}
   0 & \epsilon^2 & 0 \\
   \epsilon^4 &0 & 0 \\
   0 & 0 & -H
  \end{pmatrix}$}\right\},
\end{align*}
 where $\epsilon = e^{\pi i/3}$.
 Then $FL_j^H(j=1, 2, 3)$ are mutually equivariantly diffeomorphic
 $6$-symmetric spaces relative to $\sigma_H$, and they are $7$-dimensional.
\begin{align*}
FL_1^H \cong  FL_2^H \cong FL_3^H = \SLR/\Uone,
\end{align*}
 see Theorem \ref{Thm:3.3}.
 There are natural projections from $\SLR$:
\begin{gather*}
\pi_j: \SLR \to FL_j^H, \quad (j =1, 2, 3).
\end{gather*}

We now further define three spaces:
\[
 Fl_2^H = \{\mathcal W \mid \mbox{$\mathcal W$ is a regular para-complex flag in $\PC^3$} \},
\]
and
\[
\widetilde {Fl_2^H} = \{U (P_H^{\epsilon} (P_H^{\epsilon})^T) U^{-1}\;|\; U \in \SLR\}, \quad
\widetilde \SLGr(3, \PC^3) = \{U (P_H^{\epsilon} (P_H^{\epsilon})^T P_H^{\epsilon}) U^{T}\;|\; U \in \SLR\}.
 \]
 It is easy to see that
 \begin{align*}
\SLGr(3, \PC) &= \SLR/\SO,\quad \widetilde{\SLGr}(3, \PC^3)  = \SLR/\SO,
\end{align*}
 and thus the spaces $\SLGr(3, \PC^3)$ and $\widetilde{\SLGr}(3, \PC^3)$ are
 naturally equivariantly diffeomorphic,
 that is, there exists a diffeomorphism $\phi :\SLGr(3, \PC^3) \to 
 \widetilde{\SLGr}(3, \PC^3)$ such that $\phi (g. x) = g. \phi (p)$ for 
 $g \in \SLR$ and $p \in \SLGr(3, \PC)$.
 symmetric spaces relative to $\sigma_H^3$, and they are $5$-dimensional,
  see Appendix \ref{app:bundles}.

 It is also easy to see that
\begin{align*}
 Fl_2^H &= \SLR/D_3, \quad  \widetilde{Fl_2^H} = \SLR/D_3,
\end{align*}
 and the spaces $Fl_2^H$ and $\widetilde{Fl_2^H}$ are
 naturally equivariantly diffeomorphic
 $3$-symmetric spaces relative to $\sigma_H^2$, and they are $6$-dimensional,
 see again Appendix \ref{app:bundles}.
 There are further natural projections:
\begin{gather*}
\tilde \pi_1: FL_1^H \to \SLGr(3, \PC^3), \quad
\tilde \pi_{3, 1}: FL_2^H \to \widetilde{\SLGr}(3, \PC^3), \quad \\
\tilde \pi_2: FL_2^H \to Fl_2^H, \quad
\tilde \pi_{3, 2}: FL_3^H \to \widetilde{Fl_2^H}.
\end{gather*}
 Schematically, we have the  following diagram:
\begin{equation}\label{eq:diagram}
\begin{diagram}
    \node[3]{\SLR}\arrow{sw,l}{\pi_1} \arrow{s,l}{\pi_3} \arrow{se,l}{\pi_2} \\
    \node[2]{FL_1^H}\arrow{sw,l}{\tilde \pi_1} \node{FL_3^H}\arrow{sw,l}{\tilde\pi_{3,1}} \arrow{se,l}{\tilde\pi_{3,2}}
  \node{FL_2^H}\arrow{se,l}{\tilde \pi_2}  \\
    \node{\SLGr(3, \PC^3)\;\;\cong}   \node{\widetilde{\SLGr}(3, \PC^3)}
  \node[2]{\widetilde{Fl_2^H}\quad \quad \cong} \node{Fl_2^H}\\
\end{diagram}
\end{equation}

%%%%%%%%%%%%%%%%%%%%%%%%%%%%%
\subsection{Ruh-Vilms type theorems  associated with the Gauss maps}
 We will define three Gauss maps taking values in the various bundles given in
 the previous subsection for any liftable immersion  $f : M^2 \rightarrow \DPt$
 with $M^2$ a Riemann surface.

 We assume from now on $M^2 = \D$, and that
 $\f$ is a special lift of $f$. Then we define the frame
 $\mathcal F: \D \to  \GLR$ as in Lemma \ref{det=1} such that
 $\det \mathcal F =1$, that is,
\begin{equation}\label{eq:normalizedframe}
 \mathcal F: \D \to \SLR.
 \end{equation}
 The frame $\mathcal F$ will be called the \textit{normalized frame}.
 Note that the function $\rho$ has been chosen now as in Remark \ref{rm:rho}
 and will generally  not coincide with $\rho_0$ as in Proposition \ref{prp:rho0}.

\begin{definition}\label{dfn:normalizedGauss}
 Let $\mathcal F:\D \to \SLR$ be the normalized frame and
 $\pi_i (i=1, 2, 3)$, $\tilde \pi_i,  \tilde \pi_{3, i} (i=1, 2)$
 be the projections given in \eqref{eq:diagram}.
 Then the maps
 \begin{equation}\label{eq:Gaussmap}
\left\{
\begin{array}{l}
  \displaystyle \g_j = \pi_j \circ \mathcal  F :\D \to FL_j^H \quad (j =1, 2, 3),\\[0.1cm]
\displaystyle {\mathcal H_1} =
 \tilde \pi_{1} \circ \pi_{1} \circ \mathcal  F :\D \to
\SLGr(3, \PC^3),  \\[0.1cm]
\displaystyle {\mathcal H_2} =
 \tilde \pi_{2} \circ \pi_{2} \circ \mathcal  F :\D \to
Fl_2^H,  \\[0.1cm]
 \displaystyle {\mathcal{H}}_{3, 1} = \tilde \pi_{3, 1} \circ \pi_3 \circ \mathcal  F : \D \to \widetilde{\SLGr}(3, \PC^3), \\[0.1cm]
 \displaystyle {\mathcal{H}}_{3, 2} = \tilde \pi_{3, 2} \circ \pi_3 \circ \mathcal  F : \D \to \widetilde{Fl_2^H},
\end{array}
  \right.
\end{equation}
 will be called the \emph{Gauss maps} of $f$.
\end{definition}
 We finally arrive at Ruh-Vilms type theorems, which is
 an exact analogue to Theorem 3.6 in \cite{DoKoMa19}.
\begin{theorem}[Ruh-Vilms theorems for $\sigma_H, \sigma_H^2$ and $\sigma_H^3$]\label{Thm:3.6}
 With the notation used above we consider
any liftable immersion into $\DPt$ and the Gauss maps defined in
 \eqref{eq:Gaussmap}.
 Then the following statements hold$:$
\begin{enumerate}
\setlength{\itemsep}{0cm}
\renewcommand{\labelenumi}{(\arabic{enumi})}
 \item[{\rm (1)}]  $\mathcal{G}_j$ $(j=1, 2, 3)$ is primitive harmonic map into $FL_{j}^H$
 if and only if  $\mathcal F$ is primitive harmonic relative to $\sigma_H$
 if and only if the corresponding surface is a minimal Lagrangian
 immersion into $\DPt$.

\item[{\rm (2)}] $\mathcal H_2$ or $\mathcal H_{3, 2}$ is primitive harmonic  in $Fl_2^H$ or $ \widetilde{Fl_2^H}$
 if and only if $\mathcal F$
is primitive harmonic relative to $\sigma_H^2$ if and only if
the corresponding surface is a minimal immersion into
$\DPt$.

\item[{\rm (3)}]  $\mathcal H_1$ or $\mathcal H_{3, 1}$ is primitive harmonic map
 into $\SLGr(3, \PC^3)$ or $\widetilde{\SLGr}(3, \PC^3)$
 if and only if  $\mathcal F$ is primitive harmonic
 relative to $\sigma_H^3$ if and only if
 the corresponding surface is either a minimal Lagrangian immersion
 or a flat homogeneous immersion  into $\DPt$.
\end{enumerate}

\end{theorem}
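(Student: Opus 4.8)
The plan is to reduce each of the three statements to the equivalences already established in Theorem \ref{equivprimitive}. Each assertion is a chain of two biconditionals, and the second biconditional in each case — relating primitive harmonicity of the normalized frame $\mathcal{F}$ relative to $\sigma_H$, $\sigma_H^2$, $\sigma_H^3$ to the immersion $f$ being minimal Lagrangian, minimal, or minimal-Lagrangian-or-flat-homogeneous — is precisely Theorem \ref{equivprimitive}. Hence the only new content lies in the first biconditional of each statement: that primitive harmonicity of the relevant Gauss maps is equivalent to primitive harmonicity of $\mathcal{F}$ relative to the corresponding power of $\sigma_H$.

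The central tool is the standard characterization of primitive harmonic maps into $k$-symmetric spaces developed in Appendix \ref{app:ksymmetric}. Each target $FL_j^H = \SLR/\Uone$ is a $6$-symmetric space relative to $\sigma_H$, each of $Fl_2^H$ and $\widetilde{Fl_2^H} = \SLR/D_3$ is a $3$-symmetric space relative to $\sigma_H^2$, and each of $\SLGr(3,\PC^3)$ and $\widetilde{\SLGr}(3,\PC^3) = \SLR/\SO$ is a symmetric space relative to $\sigma_H^3$ (see Theorem \ref{Thm:3.3} and Appendix \ref{app:bundles}). By this characterization, a map obtained as the projection $\pi\circ\mathcal{F}$ of a frame into such a $k$-symmetric space $\SLR/K$ is primitive harmonic precisely when the Maurer-Cartan form $\mathcal{F}^{-1}d\mathcal{F}$, decomposed with respect to the eigenspaces $\mathfrak{g}_j$ of $\sigma_H$ (respectively $\sigma_H^2$, $\sigma_H^3$), has its off-diagonal components concentrated in $\mathfrak{g}_{\pm 1}$, with the $\mathfrak{g}_1$- and $\mathfrak{g}_{-1}$-parts being the $(1,0)$- and $(0,1)$-components respectively. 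This is exactly the condition that $\mathcal{F}$ be primitive harmonic relative to that automorphism (Definition \ref{def:primitiveharmonic}); since the normalized frame $\mathcal{F}$ is fixed by Definition \ref{dfn:normalizedGauss}, the graded condition on $\mathcal{F}^{-1}d\mathcal{F}$ is unambiguous, and both directions of the biconditional are immediate.

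This yields statement (1) at once: all three bundles $FL_j^H$ carry the same $6$-symmetric structure relative to $\sigma_H$, and the Gauss maps $\mathcal{G}_j = \pi_j\circ\mathcal{F}$ arise from the same frame $\mathcal{F}$ via the natural projections in \eqref{eq:diagram}, so primitive harmonicity of any one (hence of all) of the $\mathcal{G}_j$ is equivalent to the single graded condition that $\mathcal{F}$ be primitive harmonic relative to $\sigma_H$. For statements (2) and (3) the two candidate maps ($\mathcal{H}_2,\mathcal{H}_{3,2}$ in (2); $\mathcal{H}_1,\mathcal{H}_{3,1}$ in (3)) take values in equivariantly diffeomorphic $k$-symmetric spaces, so primitive harmonicity into either target is the same condition; applying the characterization with $\sigma_H^2$ and $\sigma_H^3$ in place of $\sigma_H$ then gives the corresponding equivalence with primitive harmonicity of $\mathcal{F}$.

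The step requiring the most care is the identification underlying the above characterization, namely that the isotropy subgroups $\Uone$, $D_3$, $\SO$ are exactly the fixed-point groups (or their identity components) of $\sigma_H$, $\sigma_H^2$, $\sigma_H^3$, so that the natural projections $\pi_j$, $\tilde\pi_i$, $\tilde\pi_{3,i}$ are genuinely the $k$-symmetric projections to which the primitive-harmonic machinery applies, together with the equivariant diffeomorphisms in diagram \eqref{eq:diagram} that make the paired targets interchangeable. These identifications are carried out in Appendix \ref{app:bundles}; once they are in place, the Ruh-Vilms equivalences follow formally, exactly as for $\CP$ in \cite[Theorem 3.6]{DoKoMa19}.
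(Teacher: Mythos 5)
Your proposal is correct and takes essentially the same route as the paper: the paper's own proof simply notes that the first equivalence in each part is an immediate consequence of the definition of primitive harmonicity of a map into a $k$-symmetric space (the Gauss maps being projections $\pi\circ\mathcal F$ of the normalized frame, with the relevant $k$-symmetric structures and isotropy groups identified in Appendix \ref{app:bundles}), that the second equivalence is exactly Theorem \ref{equivprimitive}, and that (2) and (3) follow analogously. One cosmetic slip: when recalling Definition \ref{def:primitiveharmonic} you paired $\mathfrak{g}_{1}$ with the $(1,0)$-part and $\mathfrak{g}_{-1}$ with the $(0,1)$-part, whereas the paper's convention is $\alpha_{-1}\,dz + \alpha_0 + \alpha_1\,d\bar z$; this is only a relabeling of the eigenspaces and does not affect the argument.
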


\begin{proof}
 The first equivalence in $(1)$ is a consequence of the definition
 of primitive harmonicity into a $k$-symmetric space, and the second equivalence in $(1)$
 has been stated in Theorem \ref{equivprimitive}.
 The proofs for $(2)$ and $(3)$ are similar.
\end{proof}

%%%%%%%%%%%%%%%%%%%
\section{Appendix}
\appendix
\section{Basic results for $\DP$ and $\CM$}\label{app:basic}

\subsection{The manifold $\CM$}
%%%%%%%%%%%%%%%%%%
\subsubsection{Para-complex vector space}\label{subsubsc:para}
 Let us briefly recall the
 $n$-dimensional \textit{para-complex vector space} $(\PC)^n$,
 see \cite{AMT, CFG}:
 First define a commutative and associative multiplication ``$\cdot$'' on $\R^2$  by
 \begin{equation}
  (x, y) \cdot (\tilde x, \tilde y)
 = (x \tilde x + y \tilde y, x \tilde y + y \tilde x),
 \end{equation}
 for $(x, y), (\tilde x, \tilde y) \in \R^2$. Then $(1,0) =: 1$ is the unit element.
 Choosing $\ip = (0, 1)$, $(i^{\prime})^2 = 1$ follows,
 and it is called the \textit{para-imaginary unit}.
 Then the \textit{para-complex numbers} $\PC$ are defined by
\begin{equation}\label{eq:paracomplex}
\PC = \R + \ip \R = \left\{ x + \ip y \;|\; x, y \in \R\right\},
\end{equation}
 and the multiplication on $\PC$ is given by
\begin{equation}
 z w = x \tilde x + y \tilde y + \ip (x \tilde y + y \tilde x),
\quad z = x + \ip y, \quad w = \tilde x + \ip \tilde y.
\end{equation}
 Note that $i^{\prime}$ corresponds to the \textit{para-complex structure} $\Ip$ on $\R^2$ given by
\[
  \Ip : (x, y) \in \R^2 \mapsto (y, x) \in \R^2.
\]
 Then the \textit{para-complex conjugation} $z^*$ of $z= x+ \ip y \in \PC$ is
 defined by $z^* = x - \ip y \in \PC$, the real and imaginary
 parts of $z$ are defined by $x$ and $y$, respectively. Note that $(zw)^* = z^*w^*$.
 The \textit{natural scalar product} of $\PC$ is defined by
 \begin{equation}
  \langle z,  w\rangle_h = z^* w.
 \end{equation}
 This is a \textit{para-Hermitian form}, that is, it is
 $\PC$-antilinear in the first component and $\PC$-linear in
 the second component, respectively, and $\langle w,  z\rangle_h
 = \langle z,  w\rangle_h^*$ holds.
 Note that $\langle z, z \rangle_h$
 takes values in $\R$ and
 $\langle z,  z \rangle_h=0$
 if and only if $z = a \pm \ip a$ for some $a \in \R$.
 The  set of invertible elements will be denoted by
 $(\PC)^{\times}$ and  any element $z \in \PC$ such that
 $\langle z,  z \rangle_h\neq 0$
 is obviously invertible, and we have  $z^{-1} =z^*/ \langle z, z \rangle_h$.

 The $n$-dimensional para-complex vector space $(\PC)^n$ is
 the $n$-fold direct product of $\PC$. The para-Hermitian form of
 $(\PC)^n$ is defined by
\begin{equation}\label{eq:para-inner}
\langle u, v \rangle_h  = u^{*T} v, \quad u, v \in (\PC)^n.
\end{equation}
It is invariant under the transformation
\begin{equation*}
(\PC)^n \ni u \mapsto \left( \frac{1+i^{\prime}}{2}A + \frac{1-i^{\prime}}{2}A^{-T} \right)u \in (\PC)^n,
\quad
 A \in {\rm GL}_n \mathbb R.
\end{equation*}

 It induces the pseudo-Riemannian metric $g$ with signature $(n, n)$
 and the para-Hermitian form $\omega$ on
 $(\PC)^n$ as
\begin{equation}\label{eq:gomega}
 g (u, v) = \Re \langle u, v \rangle_h \quad\mbox{and}\quad
 \omega (u, v) = -\Im \langle u, v \rangle_h,
\end{equation}
 respectively.
 The (extension of the) para-complex structure $\Ip$ (to  $(\PC)^n$)
 is clearly
 parallel with respect to the Levi-Civita connection (relative to $g$),
 and  it is also clear that $g (\ip u, v) = \omega(u, v)$
 and $g(\ip u, \ip v) = - g(u, v)$ hold. Thus
 $(\PC)^n$ is a para-Hermitian manifold.
 Moreover,  $\omega$ is closed and thus it is a symplectic form and
 $(\PC)^n$ is a para-K\"ahler manifold.
 For more details on para-complex manifolds see \cite{CFG}.

%%%%%%%%%%%%%%%%%%%%%%%%%%%%%%

\subsubsection{The geometry of $\CM$}
 From Section \ref{subsc:fibration} we  recall the definition  of $\CM$,
\[
 \CM= \left\{ (x,\chi) \in \mathbb R^{n+1} \times \mathbb R_{n+1} \mid \langle x,\chi \rangle = 1 \right\},	
 \]
 where $\R^{n+1}$ denotes the usual $n+1$-dimensional Euclidean space, $\R_{n+1}$ its dual vector space and $\langle \cdot ,\cdot \rangle$ the natural pairing.
 Since the function $h(x, \chi) = \langle x, \chi \rangle - 1$
 has only regular values, we obtain that
 $\CM$ is an embedded submanifold of $\R^{n+1} \times \R_{n+1}$.

 It is straightforward to verify that the tangent space  $T_{(x,\chi)} \CM$ of $\CM$
 can be realized by pairs of vectors
 $(\widehat{\X}, \widehat{\Xt}) \in \R^{n+1} \times  \R_{n+1}$ satisfying
\begin{equation} \label{relationtangent2}
 \langle \widehat{\X}, \chi \rangle + \langle x, \widehat{\Xt} \rangle = 0.
\end{equation}
 To rephrase $\CM$ by the para complex number $(z)$,
 we now introduce the natural standard inner product on $\R^{n+1}$,
 i.e. $\langle x, y\rangle= \sum_{i=1}^{n+1} x_i y_i$ for
 $x = \sum_{i=1}^{n+1} x_i e_i, y = \sum_{i=1}^{n+1} y_i e_i \in \R^{n+1}$,
  and using it we identify $\R^{n+1}$ with
 $\R_{n+1}$. This identification then also induces the standard inner product on $\R_{n+1}$. As a consequence of this we will replace ``$\R_{n+1}$''
 by $\R^{n+1}$ from here on.  But to indicate to which space a vector belongs we will continue to use latin letters $x,y,\dots$
 for the first component and Greek letters for the second component.

 Then the real coordinates $(x, \chi)$ and the para-complex coordinates $(z)$
 can be identified by
\begin{equation}\label{eq:correspondence}
(x, \chi) \Longleftrightarrow z = \frac{x+ \chi}{2} + \ip \frac{x- \chi}{2}
 = \frac{1+ \ip }{2} x + \frac{1- \ip }{2} \chi.
\end{equation}
 It is easy to see that $\langle x, \chi\rangle = \langle z, z\rangle_h$ under
 the above identification.
 More generally, the  para-Hermitian form \eqref{eq:para-inner}
 for the vectors  $z = \frac12 (x+ \chi + \ip (x- \chi))$ and
 $w = \frac12(\tilde x+ \tilde \chi + \ip (\tilde x- \tilde \chi))$
 can be computed
 as
\[
 \langle z, w \rangle_h = \frac12 \left\{
\langle x,\tilde \chi  \rangle+\langle \tilde x,\chi  \rangle + \ip \left(- \langle x,\tilde \chi  \rangle+\langle \tilde x, \chi  \rangle
\right)\right\}.
\]
 Using
 the para-complex numbers $\PC$ for the description of  $\CM$ we obtain
\[
 \CM= \left\{ z \in (\PC)^{n+1} \mid \langle z, z \rangle_h =1 \right\}
\]
 and  the tangent space  $T_z \CM$ to $\CM$  at the point
 $z \in  (\PC)^{n+1}$  can be realized by the vectors $\hat w \in (\PC)^{n+1}$
 satisfying
\begin{equation} \label{relationtangent}
\Re \langle \hat w, z \rangle_h = 0.
 \end{equation}
\begin{prop}
Retaining the notation introduced above we have the following$:$
\begin{enumerate}
\setlength{\itemsep}{0cm}
\renewcommand{\labelenumi}{(\arabic{enumi})}
\item[\rm (1)]  The set  $\CM$ is an embedded $2n+1$-dimensional submanifold of $(\PC)^{n+1}=
\R^{2n+2}$.
\item[\rm (2)]  The manifold $\CM$
 is a contact manifold  defined by the contact $1$-form on $\CM:$
\begin{equation}\label{eq:contactform}
 \zeta (z) = \Im \langle z, \, \rangle_h.
\end{equation}

\item[\rm (3)] The Reeb vector field $\mathcal{R}$ for the contact manifold $\CM$ with contact $1$-form $\zeta$ is given by
\[
\mathcal{R}(z) = \ip z.
\]										
\end{enumerate}
\end{prop}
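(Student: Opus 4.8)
The plan is to handle the three assertions in order, working throughout with the para-Hermitian form $\langle\cdot,\cdot\rangle_h$ and its real and imaginary parts $g=\Re\langle\cdot,\cdot\rangle_h$ and $\omega=-\Im\langle\cdot,\cdot\rangle_h$ on $(\PC)^{n+1}\cong\R^{2n+2}$. For (1) I would invoke the regular value theorem for the real-valued function $F(z)=\langle z,z\rangle_h$. Writing $z=a+\ip b$ with $a,b\in\R^{n+1}$ gives $F=|a|^2-|b|^2$, whose differential vanishes only at $z=0$, where $F=0\ne 1$; hence $1$ is a regular value and $\CM=F^{-1}(1)$ is an embedded hypersurface of dimension $2n+1$. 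This is exactly the regularity already noted before \eqref{relationtangent2}.

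The substance is in (2). First I would compute $d\zeta$: since $\zeta_z=\Im\langle z,\cdot\rangle_h$ is \emph{linear} in the base point $z$, and since the bilinear form $B(u,v)=\Im\langle u,v\rangle_h$ is automatically skew-symmetric (from $\langle v,u\rangle_h=\langle u,v\rangle_h^*$), a one-line computation yields $d\zeta=2B=-2\omega$ as the restriction to $\CM$ of the ambient form. Next I would identify the contact distribution. Combining the tangency condition $\Re\langle z,\hat w\rangle_h=0$ from \eqref{relationtangent} with the vanishing of $\zeta_z$ shows that $\ker\zeta_z$ (inside $T_z\CM$) is exactly the para-Hermitian orthogonal complement $z^{\perp_h}=\{\hat w:\langle z,\hat w\rangle_h=0\}$, a $\ip$-invariant $\PC$-subspace of real dimension $2n$. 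Since $\langle z,z\rangle_h=1\ne 0$, the line $\PC z$ is nondegenerate and $(\PC)^{n+1}=\PC z\oplus z^{\perp_h}$ splits orthogonally, so $\langle\cdot,\cdot\rangle_h$ restricts nondegenerately to $z^{\perp_h}$. The decisive step is then to show $d\zeta=-2\omega$ is nondegenerate on $z^{\perp_h}$: if $\hat w\in z^{\perp_h}$ satisfies $\Im\langle\hat w,\cdot\rangle_h\equiv 0$ on $z^{\perp_h}$, then, using $\ip$-invariance together with $\Im\langle\hat w,\ip v\rangle_h=\Re\langle\hat w,v\rangle_h$, also $\Re\langle\hat w,\cdot\rangle_h\equiv 0$ there, forcing $\hat w=0$. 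Nondegeneracy of $d\zeta$ on this $2n$-dimensional distribution is equivalent to $\zeta\wedge(d\zeta)^n\ne 0$ on $\CM$, so $\zeta$ is a contact form.

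For (3) I would verify the two properties characterizing the Reeb field of $\zeta$. Tangency of $\mathcal R=\ip z$ follows from $\langle\ip z,z\rangle_h=-\ip\langle z,z\rangle_h=-\ip$ on $\CM$, whose real part vanishes as demanded by \eqref{relationtangent}. The normalization is $\zeta_z(\ip z)=\Im\langle z,\ip z\rangle_h=\Im(\ip)=1$. Finally $\iota_{\mathcal R}d\zeta=-2\,\iota_{\mathcal R}\omega$, and since $\omega(\ip z,\hat w)=g(z,\hat w)=\Re\langle z,\hat w\rangle_h=0$ for every $\hat w\in T_z\CM$ (using $\omega(u,v)=g(\ip u,v)$ and $(\ip)^2=1$), we obtain $\iota_{\mathcal R}d\zeta=0$ on $\CM$. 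These are precisely the conditions $\zeta(\mathcal R)=1$ and $\iota_{\mathcal R}d\zeta=0$, so $\mathcal R=\ip z$ is the Reeb vector field.

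The main obstacle is the nondegeneracy argument in (2): pinning down the contact hyperplane as $z^{\perp_h}$ and checking that the imaginary part of the para-Hermitian form remains nondegenerate on this para-complex subspace. Once this is secured, the formula $d\zeta=-2\omega$ and all the Reeb identities reduce to routine computations.
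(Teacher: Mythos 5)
Your proposal is correct, but there is nothing in the paper to compare it against: the paper states this proposition \emph{without proof} (the next environment after it is the lemma on $\hat g$ and $\hat\Ip$), and only part (1) is justified beforehand, in the paragraph noting that $h(x,\chi)=\langle x,\chi\rangle-1$ has only regular values. Your part (1) is the same regular-value argument transported to para-complex coordinates, since under $z=\frac{x+\chi}{2}+\ip\frac{x-\chi}{2}$ one has $|a|^2-|b|^2=\langle x,\chi\rangle$. Parts (2) and (3), which the paper leaves entirely to the reader, are where your proposal adds genuine content, and the argument is sound: the contact hyperplane $\ker\zeta_z\cap T_z\CM$ is exactly $z^{\perp_h}$, i.e.\ the paper's horizontal distribution $\widehat{\mathcal H}_z$; the orthogonal splitting $(\PC)^{n+1}=\PC z\oplus z^{\perp_h}$ (valid because $\langle z,z\rangle_h=1$) gives nondegeneracy of $\langle\cdot,\cdot\rangle_h$ on $z^{\perp_h}$; and the $\ip$-invariance trick $\Im\langle\hat w,\ip v\rangle_h=\Re\langle\hat w,v\rangle_h$ correctly upgrades degeneracy of the imaginary part to degeneracy of the full form, forcing $\hat w=0$. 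The Reeb verification ($\zeta_z(\ip z)=\Im(\ip)=1$ and $\iota_{\ip z}\,d\zeta=0$ via $\omega(\ip z,\hat w)=g(z,\hat w)=0$ on $T_z\CM$) is also correct. One remark on normalization: your computation gives $d\zeta=2\,\Im\langle\cdot,\cdot\rangle_h=-2\omega$ (with the convention $d\alpha(X,Y)=X\alpha(Y)-Y\alpha(X)-\alpha([X,Y])$), whereas the paper's subsequent proposition asserts $d\zeta(\hat v,\hat w)=-\Im\langle\hat v,\hat w\rangle_h$ on $\widehat{\mathcal H}$; the discrepancy in sign and factor is a matter of conventions (and an apparent sign slip in the paper), and it is immaterial for your argument, since contact-ness and the Reeb characterization only use the kernel and nondegeneracy of $d\zeta$, not its normalization.
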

We also have the following properties on $\CM$.
\begin{lemma}
 Let $\hat g$ be the pseudo-Riemannian metric on $\CM$ induced from
 $g$ defined by \eqref{eq:gomega} on $(\PC)^{n+1}$. Then the following statements hold$:$
\mbox{}
\begin{enumerate}
\setlength{\itemsep}{0cm}
\renewcommand{\labelenumi}{(\arabic{enumi})}
 \item[\rm (1)] For two tangent vectors $\hat v, \hat w\in T_{z} \CM$,  we have
\begin{equation}
\hat{g}(\hat v, \hat w ) = \Re \langle \hat v, \hat w\rangle_h\quad.
\end{equation}
In real coordinates $(x, \chi)$, for tangent vectors
$(\widehat{\X}, \widehat{\Xt}), (\widehat{\Y}, \widehat{\Yt}) \in T_{(x, \chi)} \CM$
\begin{equation}\label{eq:hatg}
 \hat{g}(\, (\widehat{\X}, \widehat{\Xt}), (\widehat{\Y}, \widehat{\Yt}) \,)\
 = \frac12 (\, \langle \widehat{\X}, \widehat{\Yt} \rangle + \langle \widehat{\Y},
 \widehat{\Xt} \rangle\,).
\end{equation}
 \item[\rm (2)] For an arbitrary vector  $\hat w \in T_{z} \CM$,
 the para-complex structure $\hat \Ip$ of $(\PC)^{n+1}$ naturally acts on $\hat w$ as
\begin{equation}
\hat \Ip \hat w = \ip \hat w.
\end{equation}
 For a real tangent vector $(\widehat{\X}, \widehat{\Xt}) \in T_{(x, \chi)}\CM$,
 $\hat \Ip$ acts as
\begin{equation}
\hat \Ip (\widehat{\X}, \widehat{\Xt}) = (\widehat{\X}, - \widehat{\Xt}).
\end{equation}
\end{enumerate}
\end{lemma}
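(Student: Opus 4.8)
The plan is to treat both parts as direct consequences of the definitions \eqref{eq:gomega} of the ambient metric and para-complex structure on $(\PC)^{n+1}$, combined with the real/para-complex dictionary \eqref{eq:correspondence}; no genuinely new geometric input is needed, only careful bookkeeping of the factors $\frac{1\pm\ip}{2}$ and repeated use of $(\ip)^2=1$.

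For part (1), I would first recall that the induced metric $\hat g$ is by definition the restriction of the ambient metric $g$ to the subspaces $T_z\CM \subset T_z(\PC)^{n+1}$. Since $g(u,v)=\Re\langle u,v\rangle_h$ by \eqref{eq:gomega}, evaluating on $\hat v,\hat w\in T_z\CM$ gives $\hat g(\hat v,\hat w)=\Re\langle \hat v,\hat w\rangle_h$ at once. To pass to real coordinates I would substitute the identification \eqref{eq:correspondence}, writing $\hat v\leftrightarrow(\widehat{\X},\widehat{\Xt})$ and $\hat w\leftrightarrow(\widehat{\Y},\widehat{\Yt})$, and then apply the expansion of $\langle z,w\rangle_h$ in real coordinates displayed just above the proposition, taken with $(x,\chi)=(\widehat{\X},\widehat{\Xt})$ and $(\tilde x,\tilde\chi)=(\widehat{\Y},\widehat{\Yt})$. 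The real part kills the $\ip$-term and leaves exactly $\frac12\big(\langle \widehat{\X},\widehat{\Yt}\rangle+\langle \widehat{\Y},\widehat{\Xt}\rangle\big)$, which is \eqref{eq:hatg}.

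For part (2), the first identity is nothing but the definition of the para-complex structure $\hat\Ip$ of $(\PC)^{n+1}$ as multiplication by the para-imaginary unit $\ip$, so $\hat\Ip\hat w=\ip\hat w$ holds for every $\hat w$. For the real-coordinate form I would again use \eqref{eq:correspondence} to write a tangent vector as $\hat w=\frac{1+\ip}{2}\widehat{\X}+\frac{1-\ip}{2}\widehat{\Xt}$, multiply by $\ip$, and simplify using $\ip(1+\ip)=1+\ip$ and $\ip(1-\ip)=-(1-\ip)$; this yields $\ip\hat w=\frac{1+\ip}{2}\widehat{\X}-\frac{1-\ip}{2}\widehat{\Xt}$, which under \eqref{eq:correspondence} corresponds precisely to $(\widehat{\X},-\widehat{\Xt})$, the asserted formula.

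There is no serious obstacle, only one interpretive point worth stating carefully: the identity in (2) records the action of the \emph{ambient} structure $\hat\Ip$ of $(\PC)^{n+1}$ evaluated on tangent vectors of $\CM$ characterized by \eqref{relationtangent}, and not an endomorphism of $T_z\CM$ itself. Indeed, a short check shows $\Re\langle \ip\hat w,z\rangle_h=-\Im\langle z,\hat w\rangle_h$, so $\ip\hat w$ is again tangent to $\CM$ exactly on the contact distribution $\ker\zeta$ and not along the Reeb direction; one should therefore not try to prove that $\ip\hat w$ stays tangent. Apart from this, the whole argument reduces to the two short substitutions above.
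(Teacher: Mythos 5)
Your proof is correct and is precisely the direct verification the paper intends: the paper states this lemma without any proof, and your two substitutions --- restricting $g(u,v)=\Re\langle u,v\rangle_h$ from \eqref{eq:gomega} to $T_z\CM$ and then invoking the real-coordinate expansion of $\langle z,w\rangle_h$ displayed just before the proposition, together with multiplication by $\ip$ under the dictionary \eqref{eq:correspondence} --- are exactly the computations it implicitly relies on. Only a harmless sign slip in your closing side remark: for $\hat w\in T_z\CM$ one finds $\Re\langle \ip\hat w,z\rangle_h=-\Im\langle\hat w,z\rangle_h=+\Im\langle z,\hat w\rangle_h$, but your conclusion that $\ip\hat w$ is again tangent exactly when $\hat w$ lies in $\ker\zeta$ (so that $\hat\Ip$ is to be read as the ambient structure, not an endomorphism of $T_z\CM$) is unaffected.
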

\subsection{The space $\DP$ and the fibration $\piH: \CM \rightarrow \DP$}

\subsubsection{The basic fibration}
We recall the para-K\"ahler complex projective space
\[
 \DP = \{ ([x],[\chi]) \in \mathbb {R}P^n \times \mathbb {R}P_n \mid
\langle x, \chi\rangle>0 \},
\]
defined in \eqref{eq:DP}\footnote{The original definition is $[x] \not\perp [\chi]$},
 where $\mathbb {R}P^n$ is the $n$-dimensional real projective space,
$\mathbb {R}P_n$ is the projective space of the dual space $\R_{n+1}$ of $\R^{n+1}$,
sometimes also called the ``dual real projective space''.
Note that the equivalence class of an element in $\DP$ can be defined as follows:
 Set
\[
 (\mathbb{R}^{n+1} \times \mathbb{R}_{n+1})^+=  \left\{
 (x, \chi)\mid \langle x, \chi \rangle >0 \right\}.
\]
 Then the quotient  is defined as follows:
 $(\mathbb{R}^{n+1} \times \mathbb{R}_{n+1})^+ \ni (x, \chi) \sim
 (\tilde x,\tilde \chi) \in (\mathbb{R}^{n+1} \times \mathbb{R}_{n+1})^+$  iff
 there exists a pair $(p, q) \in \R_{+} \times \R_{+}$ or
 $(p, q) \in \R_{-} \times \R_{-}$
such that $(\tilde x, \tilde \chi)= (p x, q \chi)$. Thus
\[
 \DP = (\mathbb{R}^{n+1} \times \mathbb{R}_{n+1})^+/\sim.
\]
By using the correspondence between real and para-complex coordinates
  as stated in  \eqref{eq:correspondence}, we can write
 $\DP$ in the form
\begin{equation}\label{eq:DP2}
\DP= \left\{ [z] \mid  z \in (\PC)^{n+1}, \langle z, z\rangle_h >0 \right\},
\end{equation}
 where $[z]$ denotes an equivalence class defined as follows. Two elements $w,z \in (\PC)^{n+1}$ satisfying $\langle z, z\rangle_h>0$,
$\langle w, w\rangle_h>0$ are equivalent, $z\sim w$, if and only if there exists $\lambda \in \PC$ such that $z = \lambda w$. Note that then automatically $\langle \lambda, \lambda\rangle_h>0$.
It is easy to verify that the elements $[z] \in \DP$ in \eqref{eq:DP2} and
 $([x],[\chi]) \in \DP$ in the original definition in \eqref{eq:DP}, respectively, are bijectively related by the equivalence:
\[
 ([x],[\chi])  \Longleftrightarrow
 [z]=\left[\frac{x + \chi}{2} + \ip \frac{x - \chi}{2}\right].
\]
In fact, if $(\tilde x, \tilde \chi)
 = (p x, q \chi) \in (\R^{n+1} \times \R_{n+1})^+$
 for some $(p, q) \in \R^{\pm} \times \R^{\pm}$, then
  $\tilde z = \lambda z$ by choosing
\begin{equation}\label{eq:correpondenceR}
 \lambda = \frac{p+q}{2} + \ip \frac{p-q}{2} = \frac{1+\ip}{2}p + \frac{1-\ip}{2}q.
\end{equation}
 Vice versa, decomposing $\lambda \in \PC$ satisfying $\langle \lambda, \lambda\rangle_h>0$ as in \eqref{eq:correpondenceR}, we get $(p,q) \in \R^{\pm} \times \R^{\pm}$, and $\tilde z = \lambda z$ implies $(\tilde x, \tilde \chi) = (p x, q \chi)$.
Finally, from Section \ref{subsc:fibration} we recall the projection map
 $\piH : \CM \rightarrow \DP, \quad (x,\chi) \mapsto ([x], [\chi])$.
By using the para-complex coordinates $(z)$ and the correspondence in
 \eqref{eq:correspondence}, we easily see that
\begin{equation}\label{eq:Hopffibration}
 \piH : \CM \rightarrow \DP, \quad z \mapsto [z]
\end{equation}
 holds.

\subsubsection{The horizontal and vertical distributions}
Recall that the tangent space of $\CM$ at $z$ is given by
 $T_z \CM= \left\{\hat w \in (\PC)^{n+1} \mid \Re \langle \hat w, z\rangle_h=0\right\}$.
 Since the fiber at $[z] \in \DP$ of the fibration $\piH : \CM \rightarrow \DP$ is
\[
     \{ e^{\ip t} z \in \CM \mid t \in \R \},
\]
where $e^{\ip t} = \cosh t + \ip \sinh t$ and
 $\langle e^{\ip t}, e^{\ip t}\rangle_h =1$,
 the kernel of $d \piH$ is the direction of the Reeb vector field $\mathcal R(z)$,
 that is $\ip z$.
Therefore it is natural to decompose a tangent vector $\hat w \in T_z \CM$ as
\begin{equation}
\hat w = (\hat w- \langle \hat w, z\rangle_h z) +\langle \hat w, z\rangle_h z,
\end{equation}
where $\langle \hat w, z\rangle_h$ takes imaginary values since
 $\Re \langle \hat w, z\rangle_h=0$, and
 $\langle \hat w- \langle \hat w, z\rangle_h z,z \rangle_h = 0$.
We define the \textit{horizontal distribution} by
\begin{align*}
\widehat{\mathcal{H}}_z  =\{ \hat w \in  (\PC)^{n+1} \mid \langle \hat w, z \rangle_h = 0 \},
\end{align*}
and on the other hand the \textit{vertical distribution} by
\begin{equation} \label{vertical}\\
 \quad\widehat{\mathcal{V}}_z =  \R (\ip z).
\end{equation}
Thus we have $\hat w \in T_z \CM = \widehat{\mathcal H}_z \oplus \widehat{\mathcal{V}}_z$.

In particular, the vertical distribution is integrable (since it is $1$-dimensional),
 while the horizontal distribution is not integrable.
 In real coordinates $(x, \chi)$ the horizontal and the vertical distribution
 in the tangent space $T_{(x, \chi)} \CM$ can be given by
\begin{align*}
\widehat{\mathcal{H}}_{(x,\chi)}&= \{ (\widehat{\X}, \widehat{\Xt}) \in  \R^{n+1} \times  \R_{n+1} \mid \mbox{$\langle \widehat{\X}, \chi \rangle = 0$ and
$\langle x, \widehat{\Xt} \rangle = 0$}  \},
 \\ 
\widehat{\mathcal{V}}_{(x,\chi)} &= \R (x,-\chi),
\end{align*}
 which are the same equations as in Section \ref{subsc:fibration}.

Finally we consider a finer decomposition of the tangent space of $\CM$:
In view of \eqref{relationtangent} one verifies easily
that the horizontal distribution can be decomposed in the form
\begin{equation*}
\widehat{\mathcal{H}}_{z} =  \widehat{\mathcal{H}}^+_{z} \oplus  \widehat{\mathcal{H}}^-_{z},
\end{equation*}
where
 $\widehat{\mathcal{H}}^{\pm}_{z} =  \{ \hat w \pm \ip \hat w \mid \hat w \in \widehat{\mathcal{H}}_{z}\}$.
 In fact, for every $\hat w \in T_z\CM$, that is $\Re \langle z, \hat w\rangle_h=0$,
 we have the decomposition
\begin{align} \label{explicitdec}
\hat w & = (\hat w- \langle \hat w, z\rangle_h z) +\langle \hat w, z\rangle_h z \in \widehat{\mathcal H}_z
 \oplus \widehat{\mathcal V}_z \\
 & =(\hat w- \langle \hat w, z\rangle_h z)_+ +\langle \hat w, z\rangle_h z +
 (\hat w- \langle \hat w, z\rangle_h z)_-\in \widehat{\mathcal H}_z^+
 \oplus \widehat{\mathcal V}_z \oplus \widehat{\mathcal H}_z^-,
\nonumber
\end{align}
where $(\hat w- \langle \hat w, z\rangle_h z)_{\pm}= \frac{1\pm\ip}{2}(\hat w- \langle \hat w, z\rangle_h z)$.

In real coordinates $(x, \chi)$, any tangent vector $(\widehat{\X},  \widehat{\Xt}) \in T_{(x, \chi)}\CM$, that is, $\langle x,  \widehat{\Xt} \rangle
 = - \langle \widehat{\X}, \chi\rangle$, can be decomposed as
\begin{align} \label{explicitdec:real}
(\widehat{\X}, \widehat{\Xt}) & =
(\widehat{\X} -\langle \widehat{\X}, \chi \rangle x,
\widehat{\Xt} - \langle x,  \widehat{\Xt} \rangle \chi )
+ \langle \widehat{\X}, \chi \rangle (x, -\chi )\in \widehat{\mathcal H}_{(x, \chi)}
 \oplus \widehat{\mathcal V}_{(x, \chi)}, \\
 &=
 (\widehat{\X} -\langle \widehat{\X}, \chi \rangle x, 0)
+ \langle \widehat{\X}, \chi \rangle (x, -\chi )
+
(0,\widehat{\Xt} - \langle x,  \widehat{\Xt} \rangle \chi)
\in  \widehat{\mathcal H}_{(x, \chi)}^+
 \oplus \widehat{\mathcal V}_{(x, \chi)} \oplus \widehat{\mathcal H}_{(x, \chi)}^-.
\nonumber
\end{align}
 Let us summarize the above discussion as the following proposition.
 \begin{prop} \label{prop:A3}
 Retaining the notation introduced above we obtain:
 \begin{enumerate}
\item[\rm (1)] The projection map $\piH : \CM \rightarrow \DP, \quad (x,\chi) \mapsto ([x], [\chi])$  is a
pseudo-Riemannian submersion with vertical distribution  $\widehat{\mathcal{V}}_z =  \R (\ip z)$ and horizontal distribution $\widehat{\mathcal{H}}_z ,$ $z \in (\PC)^{n+1}$. Moreover, the horizontal distribution has the following property:
\[
\mbox{For $\hat w \in T_{z} \CM$ we have $\hat w \in \widehat{\mathcal{H}}_{z}$
if and only if $\hat\Ip \hat w  \in \widehat{\mathcal{H}}_{z}$.}
\]

  \item[\rm (2)]  There are natural decompositions for $T_{z} \CM$
\begin{align} \label{tangent_decomp}
T_{z} \CM = \widehat{\mathcal{H}}_{z} \oplus   \widehat{\mathcal{V}}_{z}
=\widehat{\mathcal{H}}^+_{z} \oplus   \widehat{\mathcal{V}}_{z}    \oplus \widehat{\mathcal{H}}^-_{z}.
\end{align}
 The distributions  $\widehat{\mathcal{H}}^{\pm}$ are $n$-dimensional
 distributions in $T \CM$.
The subspaces $\widehat{\mathcal{H}}^{\pm}_{z}$
 are eigenspaces of $\hat \Ip$ with eigenvalues $1$ and $-1$, respectively.
 Moreover, $\widehat{\mathcal{H}}^{\pm}_{z}$ are isotropic and orthogonal to the vertical subspace $\widehat{\mathcal{V}}_{z}$ with respect to the symmetric bilinear form $\hat{g}$.
The restriction of $\hat g$ to $\widehat{\mathcal{H}}_{z} =  \widehat{\mathcal{H}}^+_{z} \oplus  \widehat{\mathcal{H}}^-_{z}$
is non-degenerate with signature $(n,n)$.

\item[\rm (3)] The distributions
$ \widehat{\mathcal{H}}^+_{z} ,  \widehat{\mathcal{V}}_{z} $, and $ \widehat{\mathcal{H}}^-_{z} $ are integrable.

\end{enumerate}
\end{prop}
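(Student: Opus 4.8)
The plan is to carry out every computation in the para-complex coordinate $z$, exploiting the three identities recalled above, namely $\hat g(\hat v,\hat w)=\Re\langle \hat v,\hat w\rangle_h$, $\hat\Ip\hat w=\ip\hat w$, and $g(\ip u,\ip v)=-g(u,v)$, together with $T_z\CM=\{\hat w\mid \Re\langle \hat w,z\rangle_h=0\}$ and $\langle z,z\rangle_h=1$. For part (1), I would first note that the fibre of $\piH$ over $[z]$ is $\{e^{\ip t}z\}$, so its tangent line is $\R(\ip z)=\widehat{\mathcal V}_z=\ker d\piH$. To identify the horizontal space as the $\hat g$-orthogonal complement of $\widehat{\mathcal V}_z$ inside $T_z\CM$, I compute $\hat g(\hat w,\ip z)=\Re\langle \hat w,\ip z\rangle_h=\Re\big(\ip\langle \hat w,z\rangle_h\big)=\Im\langle \hat w,z\rangle_h$; combined with the tangency condition $\Re\langle \hat w,z\rangle_h=0$ this shows $\hat w\perp_{\hat g}\widehat{\mathcal V}_z$ iff $\langle \hat w,z\rangle_h=0$, i.e. iff $\hat w\in\widehat{\mathcal H}_z$. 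The pseudo-Riemannian submersion statement then reduces to the metric on $\DP$ being the one induced by $\hat g$ through the isomorphism $d\piH|_{\widehat{\mathcal H}}$, which is precisely the content of Proposition \ref{J_horizontal_proposition}(2). The final assertion of (1) follows from $\langle \hat\Ip\hat w,z\rangle_h=\langle \ip\hat w,z\rangle_h=-\ip\langle \hat w,z\rangle_h$ (using $\ip^*=-\ip$) and the invertibility of $\ip$.

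For part (2), the key algebraic tool is the pair of complementary idempotents $e_\pm=\tfrac{1\pm\ip}{2}$, which satisfy $e_++e_-=1$, $e_\pm^2=e_\pm$, $e_+e_-=0$, $e_\pm^*=e_\mp$, and $\ip e_\pm=\pm e_\pm$. Since $\widehat{\mathcal H}_z=\{\hat w\mid\langle \hat w,z\rangle_h=0\}$ is a $\PC$-submodule (scalars pass through the conjugate-linear first slot), applying $e_\pm$ splits it as $\widehat{\mathcal H}^+_z\oplus\widehat{\mathcal H}^-_z$ with $\widehat{\mathcal H}^\pm_z=e_\pm\widehat{\mathcal H}_z$, each of real dimension $n$; the two direct-sum decompositions of $T_z\CM$ are then exactly \eqref{explicitdec}. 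The eigenvalue claim is immediate from $\hat\Ip(e_\pm\hat w)=\ip e_\pm\hat w=\pm e_\pm\hat w$. Isotropy follows because for $\hat u,\hat v\in\widehat{\mathcal H}^+_z$ one has $\hat g(\hat u,\hat v)=\hat g(\hat\Ip\hat u,\hat\Ip\hat v)=-\hat g(\hat u,\hat v)$, hence $0$, and likewise on $\widehat{\mathcal H}^-_z$; orthogonality to $\widehat{\mathcal V}_z$ is inherited from $\widehat{\mathcal H}_z\perp\widehat{\mathcal V}_z$ established in (1).

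I expect the signature to be the one point needing genuine care. Since $z$ is non-isotropic, $\langle\cdot,\cdot\rangle_h$ restricts non-degenerately to $\widehat{\mathcal H}_z=(\PC z)^{\perp_h}$, and I would pass to non-degeneracy of the \emph{real} form $\hat g=\Re\langle\cdot,\cdot\rangle_h$ on $\widehat{\mathcal H}_z$ by the following trick: if $\hat u$ were $\hat g$-null against all of $\widehat{\mathcal H}_z$, then replacing $\hat v$ by $\ip\hat v$ (still in the $\PC$-module $\widehat{\mathcal H}_z$) also kills $\Im\langle \hat u,\hat v\rangle_h$, forcing $\langle \hat u,\cdot\rangle_h\equiv0$ and hence $\hat u=0$. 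Two complementary isotropic subspaces $\widehat{\mathcal H}^\pm_z$ of equal dimension $n$ on which the restriction of $\hat g$ is non-degenerate then force the signature to be $(n,n)$.

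For part (3), I would dispose of $\widehat{\mathcal V}$ at once, being one-dimensional and hence integrable, with leaves the fibres $\{e^{\ip t}z\}$. For $\widehat{\mathcal H}^\pm$ I would avoid the Frobenius bracket computation and instead exhibit explicit foliations using the real-coordinate descriptions read off from \eqref{explicitdec:real}, namely $\widehat{\mathcal H}^+_{(x,\chi)}=\{(\mathfrak a,0)\mid\langle\mathfrak a,\chi\rangle=0\}$ and $\widehat{\mathcal H}^-_{(x,\chi)}=\{(0,\mathfrak b)\mid\langle x,\mathfrak b\rangle=0\}$. The slices $\{(x,\chi_0)\in\CM\}$ with $\chi_0$ fixed are $n$-dimensional affine submanifolds whose tangent spaces are precisely $\widehat{\mathcal H}^+$, so they are integral manifolds foliating $\CM$; symmetrically, the slices $\{(x_0,\chi)\in\CM\}$ integrate $\widehat{\mathcal H}^-$. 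The main obstacle is the careful non-degeneracy and signature argument in (2); everything else reduces to the idempotent algebra of $e_\pm$ and the explicit slices.
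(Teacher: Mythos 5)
Your parts (2) and (3) are correct and essentially coincide with the paper's own proof. For (3) the paper exhibits exactly your slices as integral manifolds, namely $\mathcal{I}^+_{(x,\chi)} = \{(y,\pi)\in\CM\mid\pi=\chi\}$ and $\mathcal{I}^-_{(x,\chi)} = \{(y,\pi)\in\CM\mid y=x\}$, and for (2) it invokes the decomposition \eqref{explicitdec}, whose $\frac{1\pm\ip}{2}$ factors are precisely your idempotents $e_\pm$. In fact your treatment of (2) is \emph{more} complete than the paper's one-sentence proof: the argument that a $\hat g$-null vector $\hat u$ must satisfy $\langle\hat u,\cdot\rangle_h\equiv 0$ (by testing against both $\hat v$ and $\ip\hat v$), followed by the standard fact that two complementary $n$-dimensional isotropic subspaces force signature $(n,n)$, is a genuine and correct filling-in of details the paper leaves implicit.

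The one genuine gap is in part (1). You correctly identify $\ker d\piH = \widehat{\mathcal{V}}_z$ and show that the $\hat g$-orthogonal complement of $\widehat{\mathcal{V}}_z$ in $T_z\CM$ is $\widehat{\mathcal{H}}_z$, but you then reduce the pseudo-Riemannian submersion claim to Proposition \ref{J_horizontal_proposition}(2). This is circular within the paper's logic: Proposition \ref{J_horizontal_proposition} is proved in Appendix \ref{subsub:induced} \emph{from} Proposition \ref{prop:A3} (its statement (1) ``is just (1) in Proposition \ref{prop:A3}'') together with Theorem \ref{thm:Basics}, which in turn rests on Proposition \ref{prop:A3}. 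What is actually needed at this point --- and what the paper's proof supplies --- is the verification that $\hat g$ restricted to horizontal spaces descends to a well-defined metric on $\DP$: if $\tilde z = \lambda z$ lie in the same fiber, then $d\piH$ identifies $\hat w\in\widehat{\mathcal{H}}_z$ with $\lambda\hat w\in\widehat{\mathcal{H}}_{\tilde z}$, and
$\hat g_{\tilde z}(\lambda\hat v,\lambda\hat w) = \Re\bigl(\lambda^*\lambda\langle\hat v,\hat w\rangle_h\bigr) = \langle\lambda,\lambda\rangle_h\,\hat g_z(\hat v,\hat w) = \hat g_z(\hat v,\hat w)$,
since $\langle\lambda,\lambda\rangle_h = 1$ for $\lambda$ relating two points of $\CM$. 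Replacing your circular citation by this one-line fiber-invariance computation (which also shows the induced metric on $\DP$ is well defined, so that the phrase ``pseudo-Riemannian submersion'' has content) makes your proof complete and then fully parallel to the paper's.
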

\begin{proof}
 $(1)$: We need to introduce a pseudo-Riemannian metric on $\DP$.
 The manifold $\CM$ has a pseudo-Riemannian metric with signature $(n, n+1)$
 induced from a natural para-Hermitian
 form on $(\PC)^{n+1}$ of constant curvature. Then it is straightforward to
 introduce a pseudo-Riemannian metric on $\DP$ through the fibration $\piH$.
 Namely, it is induced from the horizontal
 part $\widehat{\mathcal{H}}_{z}$ of  $T_z\CM$.
 In fact, the pseudo-Riemannian metric on $\DP$ is an indefinite version of
 the Fubini-Study metric of the projective space $\mathbb C P^n$.

 Let $z,\tilde z \in \CM$ be such that $\tilde z = \lambda z$ for some $\lambda \in \PC$, and let $\hat w \in T_z\CM$. Then $d\piH$ maps $\lambda\hat w \in T_{\tilde z}\CM$ to the same vector in $T\DP$ as $\hat w$. Moreover, for arbitrary $\hat v,\hat w \in T_z\CM$ we have $\hat g_z(\hat v,\hat w) = \hat g_{\tilde z}(\lambda\hat v,\lambda\hat w)$, because $\langle \lambda,\lambda \rangle_h = 1$. The orthogonal complement $(\operatorname{ker} d\piH)^{\perp}$ is
 exactly the horizontal distribution $\widehat{\mathcal H}$, since for
 $\hat v = \ip z\in \operatorname{ker} d\piH = \widehat{\mathcal V}_z$  and
 $\hat w \in \widehat{\mathcal H}_z$,
\[
 \hat g_z (\hat v, \hat w)= \Re \langle \hat v, \hat w \rangle_h
 = - \Im \langle z, \hat w \rangle_h = 0.
\]
 Therefore
 $d \piH :(\operatorname{ker} d\piH)^{\perp} \to T\DP$ is
 an isometry and $\piH$ is a pseudo-Riemannian submersion.
 The second statement is a straightforward computation.

 $(2)$: The second statement $(2)$ follows from \eqref{explicitdec} and from the orthogonality relation deduced above.

 $(3)$:  For $ \widehat{\mathcal{V}}_{z} $  this is clear,
 since the distribution is $1$-dimensional.
For $ \widehat{\mathcal{H}}^{\pm}_{z}$ we obtain as integral manifolds
 $\mathcal{I}_{z}^{\pm}$ through $z$
the sets
\begin{equation}
\mathcal{I}_{z}^{\pm} = \left\{ p \in \CM \mid z \mp \ip z = p \mp \ip p \right\}.
\end{equation}
 In real coordinates $(x, \chi)$,
\begin{equation}
\mathcal{I}_{(x, \chi)}^{+} = \left\{ (y, \pi) \in \CM \mid \pi = \chi \right\},
 \quad
\mathcal{I}_{(x, \chi)}^{-} = \left\{ (y, \pi) \in \CM \mid y = x \right\}.
\end{equation}
This completes the proof.
\end{proof}

 %%%%%%%%
On the other hand, the contact structure on $\CM$ induces
a natural symplectic structure on the horizontal distribution $\widehat{\mathcal H}$.
\begin{prop}
\mbox{}
\begin{enumerate}
\setlength{\itemsep}{0cm}
\renewcommand{\labelenumi}{(\arabic{enumi})}
\item[\rm (1)] The differential of the contact form $\zeta$ in \eqref{eq:contactform} defines a symplectic $2$-form on the horizontal distribution $\widehat{\mathcal H}:$
\[
\hat  \omega_z (\hat v, \hat w) = d \zeta (\hat v, \hat w)  = - \Im \langle \hat v, \hat w\rangle_h,
\quad \hat v, \hat w \in \widehat{\mathcal H}_z
\]
 For real horizontal vectors $(\widehat{\X},\widehat{\Xt}),
 (\widehat{\Y},\widehat{\Yt}) \in \widehat{\mathcal H}_{(x, \chi)}$,
\begin{align}\label{eq:omegahat}
\hat{\omega}_{(x,\chi)}(\,(\widehat{\X},\widehat{\Xt}),
 (\widehat{\Y},\widehat{\Yt})\,) = \frac12
(\,\langle \widehat{\X},\widehat{\Yt} \rangle - \langle \widehat{\Y},
 \widehat{\Xt} \rangle\,).
\end{align}
The form $\hat\omega$ is closed and its kernel is given by the vertical
 subspace $\widehat{\mathcal{V}}_{z}$.

\item[\rm (2)] The covariant derivative with respect to the Levi-Civita connection of $\hat g$ of the tensor $\hat \Ip$, and hence also the form $\hat\omega$,
 in the direction of the vertical subspace vanishes. The same holds for the vector field $\ip z $ generating the vertical subspace.
\end{enumerate}

\end{prop}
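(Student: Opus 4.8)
The plan is to treat both parts extrinsically, regarding $\CM=\{z\mid \langle z,z\rangle_h=1\}$ as an umbilic hypersurface of the flat para-Hermitian space $(\PC)^{n+1}$ and reducing everything to the constant bilinear form $\langle\cdot,\cdot\rangle_h$ together with the position vector $z$, which I will use as a unit normal (note $\hat g(z,z)=\Re\langle z,z\rangle_h=1$ and $\hat g(z,\hat w)=\Re\langle z,\hat w\rangle_h=0$ for tangent $\hat w$).

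For part (1) I would first compute $d\zeta$ ambiently. Extending two tangent vectors $\hat v,\hat w$ to the constant vector fields they span on $(\PC)^{n+1}$, the function $z\mapsto\zeta_z(\hat w)=\Im\langle z,\hat w\rangle_h$ from \eqref{eq:contactform} is linear, so $d\zeta(\hat v,\hat w)$ is a universal multiple of $\Im\langle\hat v,\hat w\rangle_h-\Im\langle\hat w,\hat v\rangle_h$; the para-Hermitian symmetry $\langle\hat w,\hat v\rangle_h=\langle\hat v,\hat w\rangle_h^{*}$ makes the real (symmetric) part drop out, leaving $d\zeta$ proportional to $\Im\langle\cdot,\cdot\rangle_h$. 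Substituting the dictionary \eqref{eq:correspondence} between $z$ and $(x,\chi)$ into the real-coordinate expression of $\langle\cdot,\cdot\rangle_h$ recorded above fixes the constant and yields exactly $\tfrac12(\langle\widehat{\X},\widehat{\Yt}\rangle-\langle\widehat{\Y},\widehat{\Xt}\rangle)=-\Im\langle\hat v,\hat w\rangle_h$, i.e.\ \eqref{eq:omegahat}, which by \eqref{eq:gomega} is the restriction to $\widehat{\mathcal H}$ of the ambient symplectic form $\omega$. Closedness is then immediate since $\hat\omega=d\zeta$ is exact. For the kernel I would observe that $\ker\zeta=\widehat{\mathcal H}$ (a tangent vector has $\Re\langle z,\hat w\rangle_h=0$, so $\zeta(\hat w)=0$ forces $\langle\hat w,z\rangle_h=0$), and that $\hat\omega(\hat v,\hat w)=\hat g(\hat\Ip\hat v,\hat w)$ by the relations following \eqref{eq:gomega}; since $\hat g$ is nondegenerate on $\widehat{\mathcal H}$ with signature $(n,n)$ by Proposition \ref{prop:A3}(2) and $\hat\Ip$ is a bijection of $\widehat{\mathcal H}$, the restriction $\hat\omega|_{\widehat{\mathcal H}}$ is nondegenerate. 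Finally, for tangent $\hat w$ one has $\langle z,\hat w\rangle_h=\ip\,\zeta(\hat w)$, so using $\ip^{2}=1$ gives $\Im\langle\ip z,\hat w\rangle_h=0$; hence the Reeb direction $\mathcal R=\ip z$ lies in the kernel of $d\zeta$ on all of $T_z\CM$, and with the splitting \eqref{tangent_decomp} this shows the kernel is precisely $\widehat{\mathcal V}_z=\R(\ip z)$ of \eqref{vertical}.

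For part (2) the preliminary step is the Gauss formula. Differentiating $\hat g(Y,z)=0$ along a tangent field $X$ and using $D_Xz=X$ for the position field (with $D$ the flat ambient connection) gives the totally umbilic identity $D_XY=\widehat\nabla_XY-\hat g(X,Y)\,z$ for tangent $X,Y$. The ambient para-complex structure is multiplication by $\ip$, hence $D$-parallel, but it does not preserve $T\CM$: it interchanges the normal $z$ and the vertical field $\mathcal R=\ip z$. I would therefore work with the tangential tensor $\hat\Ip Y:=\ip Y-\zeta(Y)\,z$ (its normal part being $\hat g(\ip Y,z)z=\zeta(Y)z$). Computing $(\widehat\nabla_{\mathcal R}\hat\Ip)Y=\widehat\nabla_{\mathcal R}(\hat\Ip Y)-\hat\Ip(\widehat\nabla_{\mathcal R}Y)$ directly from these formulas, using $D_{\mathcal R}z=\mathcal R$, $D_{\mathcal R}(\ip Y)=\ip D_{\mathcal R}Y$ and the identity $\hat g(\mathcal R,Y)=-\zeta(Y)$, the umbilic term $-\hat g(\mathcal R,Y)\mathcal R=\zeta(Y)\mathcal R$ cancels the $\hat\Ip$-defect term $-\zeta(Y)\mathcal R$, leaving $\widehat\nabla_{\mathcal R}\hat\Ip=0$. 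Since $\widehat\nabla$ is metric ($\widehat\nabla_{\mathcal R}\hat g=0$) and $\hat\omega(\cdot,\cdot)=\hat g(\hat\Ip\cdot,\cdot)$, the vanishing of $\widehat\nabla_{\mathcal R}\hat\Ip$ forces $\widehat\nabla_{\mathcal R}\hat\omega=0$ as well. The last assertion is the cleanest: $D_{\mathcal R}\mathcal R=\ip D_{\mathcal R}(\ip z)=\ip^{2}z=z$ is purely normal, so its tangential part $\widehat\nabla_{\mathcal R}\mathcal R$ vanishes, i.e.\ the Reeb orbits $t\mapsto e^{\ip t}z$ are geodesics of $\hat g$.

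The step I expect to be the main obstacle is the cancellation in part (2): because $\hat\Ip$ fails to be tangent-valued one cannot simply invoke $D$-parallelism, and one must carry both the second-fundamental-form term $-\hat g(X,Y)z$ and the normal correction $-\zeta(Y)z$ through the computation. The content of the proposition is exactly that along the vertical direction these two sources of error cancel; conceptually this reflects the fact that the Reeb flow $z\mapsto e^{\ip t}z$ acts by para-holomorphic isometries (it preserves $\langle\cdot,\cdot\rangle_h$ since $(\ip t)^{*}=-\ip t$) and hence preserves $\hat g$, $\hat\Ip$ and $\hat\omega$, so one may alternatively prove the three vanishing statements by invoking this invariance together with $\widehat\nabla_{\mathcal R}\mathcal R=0$. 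For part (1) the only delicate point is pinning down the sign and normalization so that $d\zeta$ matches $\omega$ of \eqref{eq:gomega}; once that is done via the real-coordinate identity \eqref{eq:omegahat}, the closedness and kernel assertions are routine.
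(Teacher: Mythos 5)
The paper's own ``proof'' of this proposition is the single sentence that it follows by direct calculation, so your extrinsic computation is not an alternative route but precisely the calculation the paper omits --- and most of it is right. Part (2) in particular is handled correctly and with more care than the paper itself: you observe that multiplication by $\ip$ does not preserve $T\CM$ and must be replaced by the tangent-valued tensor $\hat\Ip Y = \ip Y - \zeta(Y)z$, you derive the umbilic Gauss formula $D_XY = \widehat\nabla_XY - \hat g(X,Y)z$, use the identity $\hat g(\mathcal R,Y) = -\zeta(Y)$, and obtain the cancellation $(\widehat\nabla_{\mathcal R}\hat\Ip)Y = -\hat g(\mathcal R,Y)\mathcal R - \zeta(Y)\mathcal R = 0$; the statements for $\hat\omega$ and for $\widehat\nabla_{\mathcal R}\mathcal R$ then follow as you say. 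Only your closing remark overreaches: invariance of $\hat\Ip$ under the Reeb flow yields $\mathcal L_{\mathcal R}\hat\Ip = 0$, which differs from $\widehat\nabla_{\mathcal R}\hat\Ip$ by terms involving $\widehat\nabla\mathcal R$, so flow-invariance together with $\widehat\nabla_{\mathcal R}\mathcal R = 0$ is not by itself a substitute for the computation.

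The genuine gap is in part (1), at exactly the step you flagged as ``the only delicate point'' and then waved through. Carrying out your own scheme with constant extensions gives
\[
d\zeta(\hat v,\hat w) \;=\; \hat v\bigl(\zeta(\hat w)\bigr) - \hat w\bigl(\zeta(\hat v)\bigr)
\;=\; \Im\langle \hat v,\hat w\rangle_h - \Im\langle \hat w,\hat v\rangle_h
\;=\; 2\,\Im\langle \hat v,\hat w\rangle_h ,
\]
a \emph{positive} multiple of $\Im\langle\cdot,\cdot\rangle_h$ (it is $+\Im\langle\cdot,\cdot\rangle_h$ under the convention that puts a factor $\tfrac12$ in $d$ of a $1$-form). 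The dictionary \eqref{eq:correspondence} cannot ``fix the constant'': it only re-expresses $-\Im\langle\hat v,\hat w\rangle_h$ as $\tfrac12(\langle\widehat{\X},\widehat{\Yt}\rangle - \langle\widehat{\Y},\widehat{\Xt}\rangle)$, i.e.\ it proves the \emph{second} equality of the displayed claim and says nothing about how either side relates to $d\zeta$. Nor can one escape by reading $\zeta$ with the slot in the first argument, because the Reeb normalization $\zeta(\ip z) = \Im\langle z,\ip z\rangle_h = +1$ asserted in the preceding proposition pins down the convention you used. So what your argument actually proves is $d\zeta = -2\hat\omega$ (resp.\ $d\zeta = -\hat\omega$), not $d\zeta = \hat\omega$. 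The closedness of $d\zeta$, its nondegeneracy on $\widehat{\mathcal H}$, and the identification of the kernel with $\widehat{\mathcal V}_z$ via the splitting \eqref{tangent_decomp} all survive, since they are insensitive to a nonzero constant factor; but the asserted chain $\hat\omega_z = d\zeta = -\Im\langle\cdot,\cdot\rangle_h$ is off by a negative constant. This is evidently a sign slip in the paper's statement itself; a correct write-up should prove $d\zeta = 2\Im\langle\cdot,\cdot\rangle_h = -2\hat\omega$ and flag the discrepancy, rather than assert, as you do, that the computation ``yields exactly'' the printed formula.
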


\begin{proof}
The proposition is proven by direct calculation.
\end{proof}

%%%%%%%%%%%%%%%%%
\subsection{Homogeneous structures of $\CM$ and $\DP$}
\subsubsection{The action of $\SLRn$ on $\CM$}
It is clear that the group $\SLRn$  acts naturally by matrix multiplication on
 $\R^{n+1}$. Then the contragredient representation of  $\SLRn$
is given by $g \in  \SLRn,
\chi \in \R_{n+1} , u \in \R^{n+1}$:
\begin{equation} \label{contragredient}
(g^*\chi)(u) = \chi( g^{-1}u).
\end{equation}
 Using these definitions we obtain the following.
\begin{prop}
 Denoting by $e_1, \dots e_{n+1}$ the natural basis of $\R^{n+1}$ and by $\delta_j$, given by $\delta_j(e_k) = \delta_{jk},$ its dual basis, we have the following$:$
\mbox{}
\begin{enumerate}
\setlength{\itemsep}{0cm}
\renewcommand{\labelenumi}{(\arabic{enumi})}
 \item[\rm (1)]
The manifold $\CM$ is a connected homogeneous space under the action of $\SLRn$
given by
\begin{equation}
g(u,\chi) = (g u, g^*\chi),
\end{equation}
where $u,\chi, g$ are as above.
 \item [\rm (2)] The isotropy group at $(e_1, \delta_1)$ of the action just stated
 is isomorphic to $\SLRnm \cong \{ 1 \} \times \SLRnm$
 and thus $\CM$ can be written as
 the  homogeneous space
\begin{equation} \label{CM}
 \CM = \SLRn/\mathrm{SL}_n \R.
\end{equation}
\item[\rm (3)]  The group $\SLRn$ acts on $\CM$ by isometries
 and leaves the horizontal distributions   $ \widehat{\mathcal{H}}^{\pm}$ and the vertical distribution invariant.
\end{enumerate}
\end{prop}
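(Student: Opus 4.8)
The plan is to treat the three assertions in turn; each reduces to elementary linear algebra, together with one invocation of the standard theorem identifying the orbit of a transitive smooth action with the quotient by a stabilizer. First I would verify that $g\cdot(u,\chi) = (gu, g^*\chi)$, with $g^*$ the contragredient representation \eqref{contragredient}, is a well-defined smooth left action of $\SLRn$ on $\CM$. Well-definedness is immediate, since $\langle gu, g^*\chi\rangle = (g^*\chi)(gu) = \chi(g^{-1}gu) = \langle u,\chi\rangle$, so the quadric $\langle u,\chi\rangle = 1$ is preserved; the left-action law $(gh)^*\chi = g^*(h^*\chi)$ follows from $\chi\circ(gh)^{-1} = (\chi\circ h^{-1})\circ g^{-1}$, and smoothness from that of matrix multiplication and inversion. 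For transitivity it is enough to reach an arbitrary $(x,\chi)\in\CM$ from $(e_1,\delta_1)$, i.e.\ to produce $g$ with $ge_1 = x$ and $g^*\delta_1 = \chi$. By \eqref{contragredient} the second condition is equivalent to $\chi\circ g = \delta_1$, that is $\chi(ge_1)=1$ and $\chi(ge_j)=0$ for $j\geq 2$. I would therefore take the first column of $g$ to be $x$ (note $\chi(x)=1$ since $(x,\chi)\in\CM$) and fill the remaining columns with a basis of the hyperplane $\ker\chi$; this is legitimate because $\chi(x)=1\neq 0$ forces $x\notin\ker\chi$, and rescaling one of these columns (which stays in $\ker\chi$) normalizes $\det g = 1$. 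Hence the action is transitive, and $\CM$ is connected as the image of the connected group $\SLRn$ under the orbit map.

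For part (2) I would compute the stabilizer of $(e_1,\delta_1)$ directly. The equation $ge_1=e_1$ forces the first column of $g$ to be $e_1$, while $g^*\delta_1=\delta_1$, i.e.\ $\delta_1\circ g=\delta_1$ by \eqref{contragredient}, forces the first row of $g$ to be $(1,0,\dots,0)$; together with $\det g=1$ this yields exactly the block form $g=\di(1,A)$ with $A\in\SLRnm$. Thus the stabilizer is $\{1\}\times\SLRnm\cong\SLRnm$. Since the action is smooth and transitive and $\SLRn$ is second countable, the orbit map descends to an equivariant diffeomorphism $\SLRn/\SLRnm\cong\CM$, which is \eqref{CM}.

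Part (3) rests on the single invariance $\langle gu, g^*\mu\rangle = (g^*\mu)(gu) = \mu(u) = \langle u,\mu\rangle$ of the dual pairing. Because the action is the restriction to $\CM$ of a linear map of the ambient space $\R^{n+1}\times\R_{n+1}$, its differential sends a tangent vector $(\widehat{\X},\widehat{\Xt})$ to $(g\widehat{\X}, g^*\widehat{\Xt})$. Feeding this into the expression \eqref{eq:hatg} for $\hat g$ and using the invariance of the pairing shows at once that $\hat g$ is preserved, so $\SLRn$ acts by isometries. The same invariance, applied to the defining equations of the distributions read off from \eqref{explicitdec:real} --- namely $\langle\widehat{\X},\chi\rangle=\langle x,\widehat{\Xt}\rangle=0$ for $\widehat{\mathcal H}_{(x,\chi)}$, the subspaces $\{(\widehat{\X},0):\langle\widehat{\X},\chi\rangle=0\}$ and $\{(0,\widehat{\Xt}):\langle x,\widehat{\Xt}\rangle=0\}$ for $\widehat{\mathcal H}^{\pm}_{(x,\chi)}$, and $\R(x,-\chi)$ for $\widehat{\mathcal V}_{(x,\chi)}$ --- shows that each defining condition is preserved, so the differential carries every such distribution to the corresponding one at the image point $(gx, g^*\chi)$.

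The computations are all routine; the only step that is not pure linear algebra is the passage from a transitive smooth action to the diffeomorphism $\SLRn/\SLRnm\cong\CM$, where I would simply cite the standard homogeneous-space theorem (applicable since $\SLRn$ is second countable) rather than reprove that the continuous equivariant bijection is a diffeomorphism. The one point requiring genuine care throughout is to keep the pairing $\langle\cdot,\cdot\rangle$ between $\R^{n+1}$ and its dual $\R_{n+1}$ consistently oriented, so that the contragredient action indeed preserves it.
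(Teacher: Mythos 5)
Your proof is correct and follows essentially the same route as the paper: invariance of the dual pairing under the contragredient action, an explicit linear-algebra construction for transitivity, the block-diagonal stabilizer computation $g = \di(1,A)$, and linearity of the action for part (3). The only differences are ones of detail --- your one-step construction of $g$ (first column $x$, remaining columns a basis of $\ker\chi$, rescaled to fix the determinant) replaces the paper's two-step reduction to $(e_1,\delta_1)$, and you spell out the homogeneous-space theorem and the distribution-preservation checks that the paper's proof leaves implicit.
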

\begin{proof}
(1):
 First we note that (\ref{contragredient}) implies that  $\SLRn$  leaves $\CM$ invariant.
 Let $(u, \chi) \in \CM$, then one can take an element $g \in \SLRn$ such that
 $g u=e_1$. Then one can take another $g$ such that
 $ge_1 =e_1$ and $g^* \chi = \delta_{1}$. Thus the action is transitive.

(2):   By the action, the pair $(e_1,\delta_1)$ is stabilized exactly
 by $\diag(1,S)$ with $S \in {\rm SL}_{n} \mathbb R$.
 Therefore the stabilizer is $\mathrm{SL}_n \R \cong \{1\}\times \mathrm{SL}_n \R$.

(3): Since the action of $\SLRn$ is linear, it acts on the tangent vectors
 by the same formulas as on $\CM$. Thus the claim follows.
 \end{proof}

%%%%%%%%%%%%%%%%%%%%%%
Recall that the actions of
 $ \SLRn$ on $\CM$, on  $(\PC)^{n+1}$  and all other geometric objects
 investigated in  this paper, are induced naturally from the basic matrix action on
 $\R^{n+1}$ and the ``diagonal action'' $ (g, g^*)$, where $g^*$ denotes the contragredient action, see \eqref{contragredient}.
 Moreover, all natural isomorphisms/diffeomorphisms/isometries occurring in this paper are trivially equivariant relative to the corresponding actions of   $ \SLRn$.
 Further, wherever applicable, all these
 actions of  $ \SLRn$ commute.
\begin{remark}
 The center $\mathcal{C}_{n+1}$ of $\SLRn$ is  $\pm I$, if $n$ is even and it is $I$, if $n$ is odd.
 Moreover, $\mathcal{C}_{n+1}$ acts freely and properly on $\CM$.
\end{remark}
 To discuss the geometry of $\DP$, we will use what was already discussed for
 $\CM$  in the previous sections. As an application of these remarks we show:
%\violet{
\begin{prop}
Retaining the notation introduced so far, the following statements hold:
\begin{enumerate}
\setlength{\itemsep}{0cm}
\renewcommand{\labelenumi}{(\arabic{enumi})}
\item[\rm (1)]  The projection map $\piH : \CM \rightarrow \DP, \quad (x,\chi) \mapsto ([x], [\chi])$,  is
equivariant relative to the natural action of $\SLRn$ on $\CM$ and the natural action of
$\mathrm{PSL}_{n+1} \R \cong \SLRn /\mathcal{C}_{n+1}$ on $\DP$.
\item[\rm (2)]  Similar to equation \eqref{CM} one can represent $\DP$ in the form
\begin{equation}
\DP \cong \SLRn / { \bigcup_{a \in \R^\times}  \left( \{a \} \times (a^*)^{-1}\mathrm{SL}_n \R \right) }
\end{equation}
\item[\rm (2)] The following diagram commutes$:$
\begin{center}
\begin{tikzcd}[column sep=4em,row sep=4em]
{ \{1 \} \times \mathrm{SL}_n \R}  \ar[hookrightarrow]{r}{incl}   \ar[hookrightarrow]{d}[swap]{incl}   &
{ \bigcup_{a \in \R^\times}  \left( \{a \} \times (a^*)^{-1}\mathrm{SL}_n \R \right) }  \ar[hookrightarrow]{d}{incl} \\
 \SLRn  \ar{r}{id}  \ar{d}[swap]{proj}   &    \SLRn   \ar{d}{proj}    \ar{r}{proj} & {\mathrm{PSL}_{n+1} \R}
 \ar{d}{proj} \\
 \CM  \ar{r}{\piH}   &   \DP \ar{r}{id}  &\DP
\end{tikzcd}
\end{center}
 Moreover, the fiber of  $(\lbrack x \rbrack, \lbrack \chi \rbrack)$ under
$\pi_\mathcal{H}$ is $(\pi_\mathcal{H})^{-1}(\lbrack x \rbrack, \lbrack \chi \rbrack) =
\{(ax, a^{-1} \xi)\mid a \in \R^\times \}.$
\end{enumerate}
\end{prop}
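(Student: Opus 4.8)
The plan is to handle the three parts in order, reducing everything to two facts already at hand: the transitive linear action $g\cdot(x,\chi)=(gx,g^*\chi)$ of $\SLRn$ on $\CM$ with isotropy group $\{1\}\times\SLRnm$ at $(e_1,\delta_1)$, and the description $\piH(x,\chi)=([x],[\chi])$ of the fibration. For part (1), I would simply chase definitions. The action of $\mathrm{PSL}_{n+1}\R$ on $\DP\subset\mathbb R P^n\times\mathbb R P_n$ is the one induced on projective classes by $(x,\chi)\mapsto(gx,g^*\chi)$; this descends to the quotient $\mathrm{PSL}_{n+1}\R=\SLRn/\mathcal C_{n+1}$ precisely because the center $\mathcal C_{n+1}=\{\pm I\}$ acts trivially on $([x],[\chi])$. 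Then $\piH(g\cdot(x,\chi))=([gx],[g^*\chi])=g\cdot([x],[\chi])=g\cdot\piH(x,\chi)$, which is exactly the asserted equivariance.

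For part (2) the strategy is the orbit--stabilizer theorem. First I would record that $\SLRn$ acts transitively on $\DP$: any two points lift, after rescaling representatives, to points of $\CM$, on which the action is transitive, and $\piH$ is equivariant by part (1). Next I would compute the isotropy group of the basepoint $([e_1],[\delta_1])$. A matrix $g$ fixes this class iff $ge_1\in\R^\times e_1$ and $g^*\delta_1\in\R^\times\delta_1$. Writing $g=\begin{pmatrix} a & v^T \\ 0 & B\end{pmatrix}$, the first condition forces the first column to be $(a,0,\dots,0)^T$; using $g^*=(g^{-1})^T$ and computing the first column of $g^*$, the second condition reads $(B^{-1})^Tv=0$, i.e.\ $v=0$. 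Hence $g=\diag(a,B)$ with $a\in\R^\times$, and since $\det g=1$ we get $\det B=a^{-1}$. This block-diagonal subgroup is exactly $\bigcup_{a\in\R^\times}\bigl(\{a\}\times(a^*)^{-1}\SLRnm\bigr)$, which yields the homogeneous-space description $\DP\cong\SLRn/\bigcup_{a}(\{a\}\times(a^*)^{-1}\SLRnm)$.

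For part (3) the diagram commutes essentially by construction. The left-hand square expresses that enlarging the subgroup $\{1\}\times\SLRnm$ to the isotropy group of part (2) is precisely the passage from $\CM=\SLRn/\SLRnm$ to $\DP$, so that the induced map $\CM\to\DP$ is $\piH$; the right-hand portion is the equivariance of part (1) together with the identity on $\DP$. Finally, for the fiber I would note that $(x',\chi')\in\piH^{-1}([x],[\chi])$ means $x'=ax$ and $\chi'=b\chi$ with $a,b\in\R^\times$, and the defining constraint $\langle x',\chi'\rangle=1=\langle x,\chi\rangle$ forces $ab=1$, whence $(\piH)^{-1}([x],[\chi])=\{(ax,a^{-1}\chi)\mid a\in\R^\times\}$.

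I expect the only genuine work to lie in part (2): verifying that the two projective-fixing conditions really pin $g$ down to block-diagonal form, since the vanishing of the off-diagonal block $v$ requires the explicit inverse computation for $g^*$, and then matching the resulting determinant constraint $\det B=a^{-1}$ with the paper's shorthand $(a^*)^{-1}\SLRnm$ for the corresponding coset of $\SLRnm$ in $\mathrm{GL}_n\R$. Everything else is formal once transitivity and the isotropy group are established.
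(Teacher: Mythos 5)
Your proof is correct. For comparison: the paper actually gives no proof of this proposition at all — it is stated as an immediate application of the preceding remarks that all the $\SLRn$-actions in play are induced from the standard action on $\R^{n+1}$ together with its contragredient, and that all natural maps are therefore trivially equivariant. So your write-up supplies precisely the verifications the paper leaves implicit. The one place where genuine content is needed, the isotropy computation in part (2), you carry out correctly: fixing $[e_1]$ forces the first column of $g$ to be $(a,0,\dots,0)^T$, fixing $[\delta_1]$ then kills the off-diagonal block via the first column of $(g^{-1})^T$, and $\det g=1$ leaves exactly the coset $\{B \in \mathrm{GL}_n\R \mid \det B = a^{-1}\}$ of $\SLRnm$, which is what the paper's shorthand $(a^*)^{-1}\SLRnm$ denotes. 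Your transitivity argument (lift points of $\DP$ to $\CM$ by rescaling $\chi$ so that $\langle x,\chi\rangle=1$, then use transitivity on $\CM$ and equivariance) and your fiber computation (from $x'=ax$, $\chi'=b\chi$ and $\langle x',\chi'\rangle=1$ deduce $b=a^{-1}$) are likewise the intended arguments; note in passing that the ``$\xi$'' in the paper's statement of the fiber is a typo for $\chi$.
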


%%%%%%%%%%%%%%%

\subsubsection{The induced geometry on $\DP$}\label{subsub:induced}
The discussion about the geometry of $\CM$ induces quite directly the basic objects of the geometry of $\DP$. We collect these results in the following theorem.
\begin{theorem}\label{thm:Basics}
We retain the assumptions and the notation of the previous subsections. Then we obtain:
\begin{enumerate}
\setlength{\itemsep}{0cm}
\renewcommand{\labelenumi}{(\arabic{enumi})}
\item[\rm (1)]The differential $d \piH$  induces an isomorphism from the distributions
 $\widehat{\mathcal{H}}^+$ and
 $\widehat{\mathcal{H}}^-$ to the image distributions $\mathcal{H}^+ $
 and $\mathcal{H}^- $, where
\[
\mathcal{H}^{\pm}_{z} =  \{ d\piH (\hat w \pm \ip \hat w) \mid \hat w \in T_z \CM \}.
\]
 The distributions  $\mathcal{H}^+ $ and $\mathcal{H}^- $ are integrable.

\item[\rm (2)] The projection $\piH$ induces naturally a non-degenerate pseudo-Riemannian metric
$g$ on $\DP$ with signature $(n,n)$ such that for all
$v, w\in T_{[z]} \DP$ with
 $\hat v, \hat w \in  \widehat{\mathcal H}_z  \subset T_z \CM$
 and $(d\piH (\hat v), d\piH (\hat w))=(v, w):$
\[
 g(v, w) = \hat{g}(\hat v, \hat w).
\]

\item[\rm (3)]  There exists a para-complex structure $\Ip$ on $\DP$  satisfying
\[
 d\piH \circ  \hat \Ip = \Ip \circ d\piH.
\]
It has the spaces  $\mathcal{H}^+ $ and $\mathcal{H}^- $ as eigenspaces with eigenvalues $+1$ and $-1$ respectively.

\item[\rm (4)] The projection $\piH$ induces naturally a (non-degenerate, closed) symplectic form
$\omega$ on $\DP$ such that
for all
$v, w\in T_{[z]} \DP$ with
 $\hat v, \hat w \in  \widehat{\mathcal H}_z  \subset T_z \CM$
 and $(d\piH (\hat v), d\piH (\hat w))=(v, w):$
\[
  \omega (v, w) = \hat{\omega}(\hat v, \hat w).
\]
 In particular, we have  for all $v, w \in T_{[z]} \DP$
\[
 \omega(v, w) = g (\Ip v, w)\quad \mbox{and}\quad
 g(v, w) = \omega (\Ip v, w).
\]
\item[\rm (5)] 
 The tensor $\Ip$, and hence also $\omega$, are parallel with respect to the Levi-Civita
 connection of $g$, and thus $\DP$ is a para-K\"ahler manifold of dimension $2n$.
\end{enumerate}
\end{theorem}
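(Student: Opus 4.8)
The plan is to obtain every object on $\DP$ by descending the corresponding structure on $\CM$ along the submersion $\piH$, exactly as in the proof of Proposition \ref{prop:A3}. The one technical ingredient used throughout is the behaviour of the para-Hermitian form under fibre-scaling: if $\tilde z = \lambda z$ with $z,\tilde z \in \CM$, and hence $\langle \lambda,\lambda\rangle_h = 1$, then $\langle \lambda \hat v,\lambda \hat w\rangle_h = \langle \lambda,\lambda\rangle_h\langle \hat v,\hat w\rangle_h = \langle \hat v,\hat w\rangle_h$, while $d\piH(\lambda\hat w) = d\piH(\hat w)$ and $\ip(\lambda\hat w) = \lambda(\ip\hat w)$ by commutativity of $\PC$. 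These three facts guarantee that any tensor built from $\langle\cdot,\cdot\rangle_h$ and $\hat\Ip$ on the horizontal distribution is constant along the fibres and therefore descends.

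For (1)--(4) I would proceed object by object. Since $d\piH$ restricts to an isomorphism $\widehat{\mathcal{H}}_z \to T_{[z]}\DP$ (Proposition \ref{prop:A3}(1)) and $\hat\Ip$ preserves $\widehat{\mathcal{H}}_z$ with eigenspaces $\widehat{\mathcal{H}}^{\pm}_z$, the images $\mathcal{H}^{\pm}_{[z]} := d\piH(\widehat{\mathcal{H}}^{\pm}_z)$ are well defined and $d\piH$ is an isomorphism onto them; their integrability in (1) is inherited from that of $\widehat{\mathcal{H}}^{\pm}$ (Proposition \ref{prop:A3}(3)), the explicit integral manifolds $\mathcal{I}^{\pm}_z$ projecting onto integral manifolds of $\mathcal{H}^{\pm}$. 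For (2) and (4) I set $g(v,w) := \hat g(\hat v,\hat w)$ and $\omega(v,w) := \hat\omega(\hat v,\hat w)$ for horizontal lifts $\hat v,\hat w$; fibre-independence follows from the scaling identity above, non-degeneracy and signature $(n,n)$ of $g$ from the corresponding statement for $\hat g$ restricted to $\widehat{\mathcal{H}}$ in Proposition \ref{prop:A3}(2), and closedness of $\omega$ from that of $\hat\omega$. For (3) I define $\Ip := d\piH\circ\hat\Ip\circ(d\piH|_{\widehat{\mathcal{H}}})^{-1}$; the intertwining relation $d\piH\circ\hat\Ip = \Ip\circ d\piH$ on horizontal vectors is then immediate, $(\Ip)^2 = \id$ holds because $(\ip)^2 = 1$, and $\mathcal{H}^{\pm}$ are its $\pm 1$ eigenspaces. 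The two compatibility identities in (4) descend from the ambient relations $\hat g(\hat\Ip\hat v,\hat w) = \hat\omega(\hat v,\hat w)$ and $\hat\omega(\hat\Ip\hat v,\hat w) = \hat g(\hat v,\hat w)$, which one reads off from $g = \Re\langle\cdot,\cdot\rangle_h$, $\omega = -\Im\langle\cdot,\cdot\rangle_h$ and the identity $(\ip u)^* = -\ip u^*$.

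The main obstacle is (5), the parallelism of $\Ip$ (equivalently of $\omega$) with respect to the Levi-Civita connection $\widehat\nabla^g$ of $g$ on $\DP$; note that the compatibility $\omega(v,w) = g(\Ip v,w)$ already makes $\widehat\nabla^g\Ip = 0$ and $\widehat\nabla^g\omega = 0$ equivalent, so it suffices to establish either one. I would use the standard characterization of para-K\"ahler manifolds: an almost para-Hermitian manifold is para-K\"ahler precisely when its two eigendistributions are integrable and the fundamental $2$-form is closed. Both hypotheses are already in hand, namely integrability of $\mathcal{H}^{\pm}$ from (1) and $d\omega = 0$ from (4), together with the fact that $\mathcal{H}^{\pm}$ are $g$-isotropic and dually paired (inherited from Proposition \ref{prop:A3}(2)); this yields $\widehat\nabla^g\Ip = 0$ and hence the para-K\"ahler property, the dimension count $\dim_\R\DP = 2n$ being clear. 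As an alternative more in keeping with the submersion viewpoint, one can compute $\widehat\nabla^g$ from O'Neill's formulas: horizontal lifts give $\widehat\nabla^g_X(\Ip Y)$ as the horizontal part of $\widehat\nabla_{\hat X}(\ip\hat Y)$, and since $\ip$ is a parallel constant tensor in the flat ambient space $(\PC)^{n+1}$ while the preceding proposition guarantees that the vertical corrections involving $\hat\Ip$ vanish, one obtains $\widehat\nabla^g_X(\Ip Y) = \Ip\,\widehat\nabla^g_X Y$ directly. I expect the bookkeeping of the O'Neill tensors to be the only delicate point, which is why I would prefer the integrability-plus-closedness route.
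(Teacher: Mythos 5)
Your proposal is correct. Bear in mind that the paper gives no formal proof of this theorem: it is stated as a collection of facts which the preceding discussion of $\CM$ ``induces quite directly'', and your treatment of parts (1)--(4) is exactly that implicit argument written out --- descent of $\hat g$, $\hat \Ip$, $\hat\omega$ along the pseudo-Riemannian submersion $\piH$ via the fibre-scaling identities $\langle \lambda\hat v,\lambda\hat w\rangle_h=\langle \hat v,\hat w\rangle_h$, $d\piH(\lambda\hat w)=d\piH(\hat w)$, $\ip(\lambda\hat w)=\lambda\,\ip\hat w$, which is the same mechanism as in the paper's proof of Proposition \ref{prop:A3}(1). (One small reconciliation: in the theorem $\mathcal{H}^{\pm}_z$ is defined with $\hat w$ ranging over all of $T_z\CM$, and $\hat w\pm\ip\hat w$ then need not lie in $T_z\CM$; one must read $d\piH$ as the differential of the projection extended to the cone $\{z\in(\PC)^{n+1}\mid \langle z,z\rangle_h>0\}$, which annihilates $\R z+\R\,\ip z$, and then the theorem's $\mathcal{H}^{\pm}_z$ coincides with your $d\piH(\widehat{\mathcal{H}}^{\pm}_z)$.) The genuine divergence is in part (5). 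The paper's intended route --- signalled by the proposition proved immediately before the theorem, that the vertical covariant derivatives of $\hat\Ip$ and $\hat\omega$ on $\CM$ vanish --- is the submersion-type computation you only sketch as an alternative, and it is indeed delicate because $\widehat{\mathcal{H}}$ is non-integrable, so the O'Neill correction terms do not vanish. Your primary argument instead makes (5) a formal corollary of (1)--(4) through the standard equivalence that an almost para-Hermitian manifold with both eigendistributions integrable and closed fundamental $2$-form is para-K\"ahler (the bi-Lagrangian characterization, available in the paper's own references \cite{EST06,AMT}). This buys a proof with no connection or curvature bookkeeping, at the price of invoking an external theorem; the submersion route stays within the explicit flat geometry of $(\PC)^{n+1}$ but requires the care you rightly defer. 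For your route to be complete you should state explicitly that $g(\Ip v,\Ip w)=-g(v,w)$ --- equivalently the $g$-isotropy of $\mathcal{H}^{\pm}$, inherited from Proposition \ref{prop:A3}(2) --- so that $(\DP,g,\Ip)$ is almost para-Hermitian, and that each $\mathcal{H}^{\pm}$ is then $\omega$-Lagrangian, since $\omega(v,w)=g(\Ip v,w)=\pm g(v,w)=0$ on the respective eigendistribution.
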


\emph{Proof of {\rm Proposition \ref{J_horizontal_proposition}:}}
The first statement is just (1) in Proposition \ref{prop:A3}.
For the second statement, note that the preimages of the tangent vectors $X, Y \in
 T_{([x],[\chi])} \DP$ are $(\widehat{\X}, \widehat{\Xt}),
 (\widehat{\Y}, \widehat{\Yt}) \in \widehat{\mathcal{H}}_{(x, \chi)}$,
 respectively.
Then from (4) in Theorem \ref{thm:Basics}, \eqref{eq:hatg}, and \eqref{eq:omegahat} we get
\begin{align*}
(g+\omega)(X,Y)  &= \hat g((\widehat{\X}, \widehat{\Xt}),
 (\widehat{\Y}, \widehat{\Yt})) +
\hat \omega ((\widehat{\X}, \widehat{\Xt}),
 (\widehat{\Y}, \widehat{\Yt})) \\ &=
\langle \widehat{\X},\widehat{\Yt} \rangle.
\end{align*}
 This completes the proof.

%%%%%%%%%%%%%%%%%%%%%%%%%%%%
\subsection{Immersions into $\DP$ and lifts to $\CM$}

%%%%%%%%%%%%%%%%%%%%%
\subsubsection{Immersions and lifts}
 In this paper we investigate immersions $f: M^n \to \DP$  via immersions $\f: M^n \to \CM$.
To make this precise we need
\begin{definition}
Let $f: M^n \to \DP$ be any immersion. Then
\begin{enumerate}
\item[\rm (1)] A smooth map $\f: M^n \to \CM$  is called a lift of $f$ iff $f = \pi_\mathcal{H} \circ \f$.
\item[\rm (2)] If $U\subset M^n$ is an open subset of $M^n$, then a lift of $f_{|U}$ is called a ``local lift''
(of $f$ with respect to $U$).
\end{enumerate}
\end{definition}
 It is easy to see that a  lift is unique up to ``scalings''  of
 the form $(x,\chi) \mapsto (c x, c^{-1}\chi)$ for never vanishing scalar
 functions $c$.

\begin{theorem} \label{thm:A11}
If $M^n$ is a connected, simply connected manifold and $f: M^n \to \DP$ is an immersion, then there exists a lift
$\f: M^n \rightarrow \CM$, i.e. satisfying $ f = \piH \circ  \f$.
\end{theorem}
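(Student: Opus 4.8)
The plan is to recognize $\piH:\CM\to\DP$ as a principal $\R^{\times}$-bundle and to produce the lift as a section of its pullback by $f$. Indeed, the multiplicative group $\R^{\times}$ acts freely and smoothly on $\CM$ by $a\cdot(x,\chi)=(ax,a^{-1}\chi)$; this action preserves $\piH$, since $\piH(ax,a^{-1}\chi)=([x],[\chi])$, and by the fibre description in the previous proposition the fibre $(\piH)^{-1}([x],[\chi])=\{(ax,a^{-1}\chi)\mid a\in\R^{\times}\}$ is exactly one orbit. Hence $\piH$ is a principal $\R^{\times}$-bundle, in particular a locally trivial smooth fibration. A smooth map $\f:M^n\to\CM$ with $\piH\circ\f=f$ is precisely a smooth section of the pulled-back principal bundle $f^{*}\CM\to M^n$, so it suffices to exhibit such a section.

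To do this I would split the structure group using $\R^{\times}\cong\R_{>0}\times\{\pm1\}$ and reduce in two stages. First I pass to the associated double cover: the quotient map $\CM\to\CM/\R_{>0}=:\widetilde{\DP}$ induces a two-to-one map $\widetilde{\DP}\to\DP$ whose nontrivial deck transformation is induced by $(x,\chi)\mapsto(-x,-\chi)$ (the residual identification left over from $a<0$), so $\widetilde{\DP}\to\DP$ is a principal $\mathbb{Z}/2$-bundle, i.e.\ a double cover. Pulling this back by $f$ yields a double cover $\widehat{M}\to M^n$. Since $M^n$ is connected and simply connected (and, being a manifold, locally path connected), every covering of $M^n$ is trivial; in particular the two-sheeted cover $\widehat{M}$ is a disjoint union of two copies of $M^n$, and choosing one of them provides a global smooth map $s_0:M^n\to\widetilde{\DP}$ lifting $f$ through $\CM/\R_{>0}$.

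It then remains to lift $s_0$ through the residual principal $\R_{>0}$-bundle $\CM\to\CM/\R_{>0}$, i.e.\ to fix the scale $a>0$ continuously. Because the structure group $\R_{>0}\cong(\R,+)$ is contractible, this bundle admits a global smooth section over the paracompact base $M^n$: concretely I would take local lifts $\f_\alpha$ over a trivializing cover $\{U_\alpha\}$, write the transition data as $\f_\alpha=e^{h_{\alpha\beta}}\cdot\f_\beta$ with an additive cocycle $h_{\alpha\beta}$, and kill the cocycle by a partition of unity $\{\varphi_\alpha\}$ subordinate to the cover, replacing $\f_\alpha$ by $e^{-\mu_\alpha}\f_\alpha$ with $\mu_\alpha=\sum_\gamma\varphi_\gamma h_{\gamma\alpha}$ so that the modified local lifts agree on overlaps and glue to a global lift $\f$. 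Composing with the inclusion $\CM/\R_{>0}\hookleftarrow\CM$ along $s_0$ gives the desired $\f:M^n\to\CM$.

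The soft part of the argument, namely the reduction of the $\R_{>0}$-factor by a partition of unity, is entirely standard and I would not grind through the cocycle bookkeeping. The crux, and the only place where the hypothesis on $M^n$ enters, is the triviality of the double cover $\widehat{M}\to M^n$; this is precisely the step that fails for a non-simply-connected base, consistently with $\pi_1(\DP)=\pi_1(\mathbb{R}P_n)=\mathbb{Z}/2$ and with the non-liftable examples of Proposition~\ref{prp:nonlift}, and it is the part I would expect to require the most care to formulate cleanly.
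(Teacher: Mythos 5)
Your proof is correct, and its core idea---isolate a $\mathbb{Z}/2$-obstruction inside the fibration $\piH$ and kill it using simple connectedness of $M^n$---is exactly the paper's key idea; the difference lies in how the scaling direction is disposed of. The paper never forms the quotient $\CM/\R_{>0}$ abstractly: it realizes your $\widetilde{\DP}$ concretely as the submanifold $S_{2n} = \{(x,\chi) \in \CM \mid \langle x,x\rangle = \langle \chi,\chi\rangle\}$ (after fixing arbitrary Euclidean norms on $\R^{n+1}$ and $\R_{n+1}$). The norm condition selects exactly one point in each $\R_{>0}$-orbit, i.e.\ $S_{2n}$ \emph{is} a global section of your residual $\R_{>0}$-bundle, and each fiber of $\piH$ meets it in precisely two points related by $(x,\chi)\mapsto(-x,-\chi)$; hence $\piH|_{S_{2n}}$ is a double cover of $\DP$, the lift of $f$ into $S_{2n}$ exists by the covering-space argument just as in your first stage, and---since $S_{2n}$ already sits inside $\CM$---the proof ends there. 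So your entire second stage (triviality of a principal $\R_{>0}$-bundle over a paracompact base via the cocycle-killing/partition-of-unity argument) is made unnecessary by this explicit normalization. What your route buys is a clean structural separation showing that the sign ambiguity is the \emph{only} genuine obstruction, consistent with $\pi_1(\DP)\cong\mathbb{Z}/2$; what the paper's route buys is brevity, by exhibiting the section by hand instead of invoking fine-sheaf-type arguments. One bookkeeping remark: with your convention $\f_\alpha = e^{h_{\alpha\beta}}\cdot\f_\beta$, the correcting functions should be $\mu_\alpha = \sum_\gamma \varphi_\gamma h_{\alpha\gamma}$ rather than $\sum_\gamma \varphi_\gamma h_{\gamma\alpha}$ (your choice produces $\mu_\alpha-\mu_\beta = -h_{\alpha\beta}$); this is the standard sign fix and does not affect the validity of the argument.
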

\begin{proof}
We shall show that the projection $\piH$ can be restricted to a double covering of $\DP$. Introduce arbitrary Euclidean norms in the spaces $\R^{n+1}$, $\R_{n+1}$. Consider the set
\begin{equation} \label{def:S2}
S_{2n} = \{ (x,\chi) \in \CM \mid \langle x, x\rangle  = \langle \chi, \chi\rangle \}.
\end{equation}
Clearly $S_{2n}$ is a smooth manifold. For every point $([x],[\chi]) \in \DP$ there exist exactly two representatives $(x,\chi) \in S_{2n}$, related by a sign change in both components. Thus the restriction of $\piH$ to $S_{2n}$ is a double covering of $\DP$.

Now $M^n$ is simply connected, path connected, and locally path connected. By the unique homotopy lifting property there exists a smooth  lift (in fact, exactly two of them) of $f$ to $S_{2n}$, and hence to $\CM$.
\end{proof}

Before going on we formalize the obtained results about $S_{2n}$.
\begin{prop} \label{aboutS2n}
For the subset $S_{2n}$ defined in \eqref{def:S2} the following statements hold:
\begin{enumerate}
\item The set $S_{2n}$ is an embedded submanifold of $\CM$.
\item $S_{2n}$ is a double cover of $\DP$ under the restriction of $\pi_\mathcal{H}$ to $S_{2n}$.
\end{enumerate}
\end{prop}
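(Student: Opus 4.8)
The plan is to derive both statements from the regular value theorem applied to the two defining functions, and then to read off the covering structure from the free sign-involution $\iota(x,\chi)=(-x,-\chi)$.

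First I would introduce the two scalar functions $F(x,\chi)=\langle x,\chi\rangle-1$, using the natural pairing, so that $\CM=F^{-1}(0)$, and $G(x,\chi)=\|x\|^2-\|\chi\|^2$, the difference of the two chosen Euclidean norms, so that $S_{2n}=\CM\cap G^{-1}(0)$. Writing $x^\flat\in\R_{n+1}$ and $\chi^\sharp\in\R^{n+1}$ for the metric duals of $x$ and $\chi$ (characterized by $\langle x,x^\flat\rangle=\|x\|^2$ and $\langle\chi^\sharp,\chi\rangle=\|\chi\|^2$), the differentials at $(x,\chi)\in S_{2n}$ are
\[
dF(\dot x,\dot\chi)=\langle\dot x,\chi\rangle+\langle x,\dot\chi\rangle,\qquad
dG(\dot x,\dot\chi)=2\langle\dot x,x^\flat\rangle-2\langle\chi^\sharp,\dot\chi\rangle .
\]
To see that $dF,dG$ are linearly independent, I would suppose $\alpha\,dF+\beta\,dG=0$; comparing the $\dot x$- and $\dot\chi$-coefficients gives $\alpha\chi+2\beta x^\flat=0$ in $\R_{n+1}$ and $\alpha x-2\beta\chi^\sharp=0$ in $\R^{n+1}$. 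Pairing the first with $x$ and the second with $\chi$ and using $\langle x,\chi\rangle=1$ yields $\alpha+2\beta\|x\|^2=0$ and $\alpha-2\beta\|\chi\|^2=0$; subtracting gives $2\beta(\|x\|^2+\|\chi\|^2)=0$. Here positive-definiteness of the Euclidean norms is essential: since $\langle x,\chi\rangle=1$ forces $x,\chi\neq0$, we have $\|x\|^2+\|\chi\|^2>0$, hence $\beta=0$ and then $\alpha=0$. Thus $(1,0)$ is a regular value of $(F,G)$, so $S_{2n}$ is an embedded submanifold of $\R^{n+1}\times\R_{n+1}$ of dimension $2n$; equivalently $dG$ restricts to a nonzero functional on $T\CM=\ker dF$, so $0$ is a regular value of $G|_{\CM}$ and $S_{2n}$ is an embedded codimension-one submanifold of $\CM$. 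This settles (1).

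For (2) I would first show that $\piH|_{S_{2n}}\colon S_{2n}\to\DP$ is a local diffeomorphism. As $\dim S_{2n}=2n=\dim\DP$, it is enough to check that its differential is injective, i.e.\ that $T_{(x,\chi)}S_{2n}=\ker dF\cap\ker dG$ meets $\ker d\piH=\widehat{\mathcal V}_{(x,\chi)}=\R(x,-\chi)$ only in $0$. The key transversality computation is $dG(x,-\chi)=2\|x\|^2+2\|\chi\|^2>0$, so the vertical line is transverse to $S_{2n}$ and the intersection is trivial; since $d\piH$ maps $T_{(x,\chi)}\CM$ onto $T_{[z]}\DP$ with one-dimensional kernel, its restriction to the $2n$-dimensional $T_{(x,\chi)}S_{2n}$ is an isomorphism. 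Next I would bring in the involution $\iota(x,\chi)=(-x,-\chi)$, which preserves $S_{2n}$ (the defining equations are invariant), is fixed-point free (a fixed point would need $x=\chi=0$), and satisfies $\piH\circ\iota=\piH$. Identifying the fibre: the fibre of $\piH$ in $\CM$ over $([x],[\chi])$ is $\{(ax,a^{-1}\chi)\mid a\in\R^\times\}$, and membership in $S_{2n}$ forces $a^4=\|\chi\|^2/\|x\|^2>0$, which has exactly the two real solutions $\pm a_0$; the corresponding points $\pm(a_0x,a_0^{-1}\chi)$ form a single $\iota$-orbit. Hence every fibre of $\piH|_{S_{2n}}$ is one free $\iota$-orbit.

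The step that needs care, rather than computation, is the last one: a local diffeomorphism with two-point fibres is not automatically a covering map, so I would not conclude merely from the fibre count. The clean resolution is the involution: a fixed-point-free smooth involution of a Hausdorff, second countable manifold acts properly discontinuously, so $S_{2n}\to S_{2n}/\iota$ is a genuine two-sheeted covering, and $\piH|_{S_{2n}}$ descends to a bijective local diffeomorphism $\overline{\piH}\colon S_{2n}/\iota\to\DP$, necessarily a diffeomorphism. Composing, $\piH|_{S_{2n}}=\overline{\piH}\circ(\text{quotient})$ is a double cover of $\DP$, which is statement (2). Everything else—the two differential computations and the fibre equation $a^4=\|\chi\|^2/\|x\|^2$—is routine once positive-definiteness of the Euclidean norms is used to exclude the degenerate cases.
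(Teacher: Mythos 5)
Your proof is correct, and it follows the same core idea as the paper's own argument (given inside the proof of Theorem \ref{thm:A11}), but the paper's version is far terser: it asserts that $S_{2n}$ is ``clearly'' a smooth manifold, observes that every point of $\DP$ has exactly two representatives in $S_{2n}$, related by a simultaneous sign change, and concludes ``thus'' that $\piH|_{S_{2n}}$ is a double covering; Proposition \ref{aboutS2n} then just records this. You supply precisely the pieces that make that ``thus'' legitimate: the regular-value computation for statement (1) (modulo a harmless mislabel --- with your $F$ the regular value of $(F,G)$ is $(0,0)$, not $(1,0)$), the transversality computation $dG(x,-\chi)=2\|x\|^2+2\|\chi\|^2>0$ showing that $d\piH$ is an isomorphism on $TS_{2n}$, and the descent through the free involution $\iota(x,\chi)=(-x,-\chi)$. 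One remark on the step you single out as needing care: once you know that \emph{every} fibre of $\piH|_{S_{2n}}$ has exactly two points, a local diffeomorphism between Hausdorff manifolds with constant two-point fibres \emph{is} automatically a covering --- pick disjoint neighbourhoods $U_1,U_2$ of the two fibre points on which the map is a diffeomorphism; over $V=\piH(U_1)\cap\piH(U_2)$ every point already has one preimage in each $U_i$ and has only two in total, so $\piH^{-1}(V)\subseteq U_1\cup U_2$ and $V$ is evenly covered. So your involution--quotient argument is a clean, valid alternative to this standard lemma rather than a logically unavoidable detour; either route closes the gap that the paper's one-line conclusion leaves open, and both buy the same thing: a genuinely complete proof of the covering property, which the fibre count alone does not constitute.
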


\begin{corollary}  \label{aboutlifts} With the notation above we have:
\begin{enumerate}
\setlength{\itemsep}{0cm}
\renewcommand{\labelenumi}{(\arabic{enumi})}
\item[{\rm (1)}] If $f:  M^n \rightarrow \DP$ is an immersion and $U$ any
 simply connected subset of $M^n$, then $f|_U : U \rightarrow \DP$ has a global lift.
 In other words, each immersion $f:  M^n \rightarrow \DP$ has local lifts around each point of $M^n$.

\item[{\rm (2)}] Let
 $f:  M^n \rightarrow \DP$ be  an immersion, $(U_\beta)_{\beta \in \mathcal{J}}$  an open covering of $M^n$ by simply connected charts,
and $\f_\beta : U_\beta \rightarrow \CM$ a local lift of $f$ on $U_\beta$.
Then on the intersection $U_\beta \cap U_\gamma$ of two such charts the  lifts $\f_\beta$
and $\f_\gamma$ are transformed into each other by a
scaling by a uniquely determined never vanishing function
$c_{\beta \gamma} : U_\beta \cap U_\gamma \rightarrow \R \setminus \{ 0 \}$.
These functions form a cocycle
with values in $\R \setminus \{0\} $, i.e. they satisfy $c_{\alpha \beta} c_{\beta \gamma} = c_{\alpha \gamma}$. Moreover,  the
triviality of this cocycle is equivalent to the existence of a global lift of $f$.

\item[{\rm (3)}]  If $f:  M^n \rightarrow \DP$ is  an immersion, then
  there exists a two-fold cover $\hat{M}^n$ of $M^n$ such that the natural lift
  $\hat{f}$ of $f$ has a global
 lift $\hat{\f} :\hat{M}^n \rightarrow \CM$.
\end{enumerate}
\end{corollary}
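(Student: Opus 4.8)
The plan is to deduce all three parts from two ingredients already in hand: Theorem \ref{thm:A11}, which produces lifts over simply connected bases, and the uniqueness of a lift up to the scaling \eqref{scaling}. Parts (1) and (2) are then essentially bookkeeping, and the only genuinely global assertion, part (3), I would reduce to the double cover $S_{2n}\to\DP$ of Proposition \ref{aboutS2n} by a fiber product construction.

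For part (1), since a simply connected subset $U$ is in particular connected, Theorem \ref{thm:A11} applies verbatim to the restricted immersion $f|_U:U\to\DP$ and yields a global lift $\f:U\to\CM$. No further work is needed; this also gives the ``local lifts around each point'' statement by taking $U$ to be a simply connected neighbourhood.

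For part (2), I would begin from the fact, recorded just after the definition of a lift, that two lifts of the same immersion differ by a scaling $(x,\chi)\mapsto(c\,x,c^{-1}\chi)$ with $c$ nowhere vanishing. Applying this to $\f_\beta$ and $\f_\gamma$ on $U_\beta\cap U_\gamma$ produces a function $c_{\beta\gamma}:U_\beta\cap U_\gamma\to\R\setminus\{0\}$ with $\f_\beta=c_{\beta\gamma}\cdot\f_\gamma$; it is unique because a scaling fixing a given lift must be identically $1$. On a triple overlap the three relations $\f_\alpha=c_{\alpha\beta}\f_\beta$, $\f_\beta=c_{\beta\gamma}\f_\gamma$, $\f_\alpha=c_{\alpha\gamma}\f_\gamma$ force $c_{\alpha\beta}c_{\beta\gamma}=c_{\alpha\gamma}$, so $(c_{\beta\gamma})$ is a multiplicative \v{C}ech $1$-cocycle valued in $\R\setminus\{0\}$. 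A global lift $\f$ exists precisely when one can rescale each local lift by a nowhere vanishing $g_\beta$ so that the $g_\beta\f_\beta$ agree on overlaps; comparing with $\f_\beta=c_{\beta\gamma}\f_\gamma$ shows this happens exactly when $c_{\beta\gamma}=g_\gamma g_\beta^{-1}$, i.e.\ exactly when the cocycle is a coboundary.

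Part (3) I expect to be the main obstacle, and the cleanest route is the fiber product
\[
\hat M^n=\{\,(p,s)\in M^n\times S_{2n}\mid f(p)=\piH(s)\,\},
\]
with projection $\pi:\hat M^n\to M^n$, $(p,s)\mapsto p$. Since $\piH|_{S_{2n}}:S_{2n}\to\DP$ is a double cover (Proposition \ref{aboutS2n}), $\pi$ is the pullback of this double cover along $f$ and is therefore itself a two-fold covering map; the natural lift of $f$ is $\hat f=f\circ\pi$, and the second projection $\hat{\f}:\hat M^n\to S_{2n}\subset\CM$, $(p,s)\mapsto s$, satisfies $\piH\circ\hat{\f}(p,s)=\piH(s)=f(p)=\hat f(p,s)$, so $\hat{\f}$ is the desired global lift. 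The step to check carefully is that $\pi$ is a smooth two-fold covering near every point, which follows from the local triviality of $\piH|_{S_{2n}}$ pulled back through $f$; note that connectivity of $\hat M^n$ is not required, as a two-fold cover may be disconnected. Equivalently, and to connect with part (2), I would argue cohomologically: the sheaf of smooth \emph{positive} nowhere vanishing functions is fine, so the modulus $|c_{\beta\gamma}|$ is automatically a coboundary and the sole obstruction is the residual sign class in $H^1(M^n;\mathbb Z/2)$; pulling $f$ back to the double cover classified by this class kills it and produces the global lift.
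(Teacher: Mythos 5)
Your proposal is correct and follows essentially the same route as the paper: parts (1) and (2) are deduced from Theorem \ref{thm:A11} together with the uniqueness of lifts up to scaling, and part (3) is exactly the paper's argument, since your fiber product $\hat M^n$ is precisely the pullback of the double covering $S_{2n}\to\DP$ of Proposition \ref{aboutS2n} along $f$ that the paper uses. The only additions are your explicit cocycle bookkeeping in (2) (which the paper leaves implicit) and the closing cohomological remark identifying the obstruction with a sign class in $H^1(M^n;\mathbb Z/2)$, both of which are sound but not needed beyond what the paper records.
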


\begin{proof}
(1) and (2) follow from Theorem \ref{thm:A11}.
Let us prove (3). Recall from Proposition \ref{aboutS2n} that
$S_{2n} \subset \CM$ is a two-fold cover of $\DP$. Pulling back this covering $p$ along $f$ we obtain the commuting diagram
\begin{center}
\begin{tikzcd}[column sep=4em,row sep=4em]
 \hat{M}^n  \ar{r}{\hat{\f}}  \ar{d}[swap]{\hat{p}}   &    S_{2n} \subset \CM  \ar{d}{p}  \\
 M^n  \ar{r}{f}   &   \DP .
\end{tikzcd}
\end{center}

 Thus $f \circ \hat{p}: \hat{M}^n \rightarrow \DP$ has a global lift into $\CM$.
\end{proof}
In general, a given $f: M^n \to \DP$  cannot be lifted :
\begin{prop}\label{prp:nonlift}
Let $f: M^n \to \DP$  be the injective immersion from $M^n = \R P^n$ to $\DP$,
given by $f([x]) = ([x], [x])$. Then $f$ does not have any lift $\f: M^n \rightarrow \CM$.
\end{prop}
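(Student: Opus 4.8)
The plan is to exhibit a topological obstruction: a global lift would produce a nowhere-vanishing section of the tautological line bundle over $\mathbb{R}P^n$, which cannot exist for $n \geq 1$. First I would unwind what a lift $\f$ of $f([x]) = ([x],[x])$ must be. By definition $\f([x]) = (\tilde x, \tilde\chi) \in \CM$ projects under $\piH$ to $([x],[x])$, so $\tilde x$ is a representative of $[x]$ in $\R^{n+1}$ and $\tilde\chi$ a representative of $[x]$ in $\R_{n+1}$; in particular $\tilde x \in \mathbb{R}x \setminus \{0\}$, since the quadric condition $\langle \tilde x,\tilde\chi\rangle = 1$ forces $\tilde x \neq 0$. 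Thus the assignment $[x] \mapsto \tilde x$ is a smooth map sending each line $\ell = [x]$ to a nonzero vector of $\ell$; that is, it is a nowhere-vanishing global section of the tautological line bundle $\gamma_n \to \mathbb{R}P^n$ (whose fiber over $[x]$ is the line $\mathbb{R}x$). The second component $\tilde\chi$ is not even needed for the contradiction.

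Next I would conclude. A line bundle admits a nowhere-vanishing global section if and only if it is trivial, and $\gamma_n$ is non-trivial for every $n \geq 1$, e.g. because its first Stiefel--Whitney class $w_1(\gamma_n)$ is the generator of $H^1(\mathbb{R}P^n;\mathbb{Z}/2) \neq 0$. Hence no such section, and therefore no lift, exists. For a self-contained version I would instead pull everything back along the double cover $S^n \to \mathbb{R}P^n$: writing $\tilde x = t(x)\,x$ for $x \in S^n$ with a continuous scalar $t$, the requirement that $\f$ descend to $\mathbb{R}P^n$ (i.e.\ $\tilde x(-x) = \tilde x(x)$) forces $t(-x) = -t(x)$, so $t$ is odd; but $t$ is continuous and nowhere zero, hence of constant sign on the connected sphere $S^n$, contradicting oddness.

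I expect the only genuinely delicate point to be the bookkeeping that pins down $\tilde x$ as a section of $\gamma_n$: one must use that $\piH$ collapses exactly the scalings $(x,\chi) \mapsto (a x, a^{-1}\chi)$, so that the first component of any lift is forced to lie on the tautological line and to be nonzero, while antipodal invariance over $\mathbb{R}P^n$ is what manufactures the sign obstruction. This is precisely the non-triviality, for the cover $M^n = \mathbb{R}P^n$, of the $\R\setminus\{0\}$-valued cocycle of Corollary \ref{aboutlifts}; the present map furnishes a concrete immersion for which that cocycle is non-trivial and no global lift exists, in contrast to the simply connected case of Theorem \ref{thm:A11}.
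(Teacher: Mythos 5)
Your proof is correct, but it takes a genuinely different route from the paper's. The paper argues by compactness and orientability: assuming a lift exists, $\f$ is an injective immersion of the compact manifold $\R P^n$, hence an embedding, and composing with the projection $\pi_1:\CM \to \R^{n+1}$ onto the first factor yields an embedding of $\R P^n$ as a closed hypersurface of $\R^{n+1}$; this is then said to contradict the theorem of Samelson that every closed hypersurface of $\R^{n+1}$ is orientable, $\R P^n$ being non-orientable. You instead identify the first component of any lift as a nowhere-vanishing continuous section of the tautological line bundle $\gamma_n \to \R P^n$ and invoke its non-triviality ($w_1(\gamma_n)\neq 0$), or equivalently run the sign argument on the double cover $S^n$: the descent condition forces the scale factor $t$ to be odd, while being nowhere zero and continuous on the connected sphere forces it to have constant sign. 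Both are obstruction arguments, but yours buys two things. First, it is more elementary: it needs neither the embedding/compactness step nor Samelson's theorem, only connectedness of $S^n$ (or the standard non-triviality of $\gamma_n$). Second, and more substantively, it is uniform in $n$: the paper's contradiction rests on the non-orientability of $\R P^n$, which holds only for even $n$; for odd $n$ the manifold $\R P^n$ is orientable (indeed $\R P^1\cong S^1$ embeds in $\R^2$ as a closed hypersurface), so the cited theorem produces no contradiction there. Your argument covers all $n\geq 1$ without case distinction, and in that sense actually repairs a gap in the paper's own proof for odd $n$.
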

\begin{proof}
Assume there exists a lift $\f: \R P^n \rightarrow \CM$ of $f$. Since $f$ is injective,  it is easy to verify that $\f$ is injective. Therefore, since $\R P^n$ is compact, $\f$ actually is an embedding.
Let $\pi_1$ denote the natural projection $\pi_1: \CM \rightarrow \R^{n+1}$.
It is easy to show that  the map  $ \pi_1 \circ \f : \R P^n \rightarrow \R^{n+1}$
also is an embedding. Thus $f$ would define a compact non-orientable submanifold of $\R^{n+1}.$
 But this is a contradiction, since  there does not exist any closed non-orientable submanifold of $\R^{n+1}$ by
see, e.g. \cite{Samelson}.
This contradiction proves the claim.
\end{proof}

As a consequence of the last result we restrict our consideration in this paper to liftable immersions $f: M^n \to \DP$.
%%%%%%%%%%%%%%%%%%%%%%
\subsubsection{Horizontal lifts}
In other cases, like minimal Lagrangian surfaces in $\C P^2$, one can show that
certain finite coverings for a given minimal Lagrangian immersion into $\C P^2$ have a
global horizontal lift. Below we consider similar questions for the situation considered in this paper.

The notion of ``horizontal lift'' has been defined in Definition \ref{def:horizontal}.
Whether $f$ is locally horizontally liftable can be decided by virtue of the following result.
\begin{lemma} \label{lem:horizontalLagrangian}
Let $f: M^n \to \DP$ be an immersion. Then locally around every point $y \in M^n$ there exists a horizontal lift $\f: U \rightarrow \CM$ of $f$ from a neighbourhood $U \subset M^n$ of $y$ if and only if the immersion $f$ is Lagrangian with respect to
the symplectic form $\omega$ on $\DP$.
\end{lemma}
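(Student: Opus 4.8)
The plan is to reduce horizontal liftability to the closedness of the auxiliary $1$-form $\psi$, and then to recognize $d\psi$ as essentially the pullback of the symplectic form $\omega$, so that the closedness of $\psi$ becomes precisely the Lagrangian condition on $f$.

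First I would fix $y \in M^n$ and, by Corollary \ref{aboutlifts}(1), choose a simply connected neighbourhood $U$ on which $f$ admits some lift $\f = (x,\chi)$, with associated $1$-form $\psi$ as in \eqref{psi_def}. By Proposition \ref{prop:horizontal} a horizontal lift exists on $U$ exactly when \eqref{eq:horizontalcond}, namely $\alpha^{-1}d\alpha = -\psi$, has a nowhere-vanishing solution $\alpha$ on $U$. Writing $\alpha = e^{-\lambda}$, this is the equation $d\lambda = \psi$, which on the simply connected $U$ is solvable (Poincaré lemma) if and only if $\psi$ is closed. Since a genuine horizontal lift has $\psi \equiv 0$ and hence $d\psi = 0$, I conclude that the existence of a local horizontal lift around $y$ is equivalent to $d\psi = 0$ on a neighbourhood of $y$.

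The heart of the argument is the identity $d\psi = -2\,f^*\omega$. Starting from $\psi_\alpha = \langle \partial_\alpha x,\chi\rangle$, the second-derivative term $\langle \partial_\beta\partial_\alpha x,\chi\rangle$ is symmetric and drops out upon antisymmetrization, so that $\partial_\beta\psi_\alpha - \partial_\alpha\psi_\beta = \langle \partial_\alpha x,\partial_\beta\chi\rangle - \langle \partial_\beta x,\partial_\alpha\chi\rangle$. Substituting $\partial_\beta\chi = \eta_\beta - \psi_\beta\chi$ from \eqref{dMuX}, together with $\langle \partial_\alpha x,\eta_\beta\rangle = h_{\alpha\beta}$ from \eqref{formula1} and $\langle x,\eta_\beta\rangle = 0$, the quadratic terms in $\psi$ cancel and I am left with $\partial_\beta\psi_\alpha - \partial_\alpha\psi_\beta = h_{\alpha\beta} - h_{\beta\alpha}$, which is precisely the first compatibility relation in \eqref{compatibility_n}. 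As the skew-symmetric part of $h$ is the pullback of $\omega$, i.e. $\tfrac12(h_{\alpha\beta} - h_{\beta\alpha}) = (f^*\omega)_{\alpha\beta}$, this yields $d\psi = -2\,f^*\omega$. I would also remark that $d\psi$ is independent of the lift, since under the scaling \eqref{scaling} one has $\psi \mapsto \psi + d\log|\alpha|$, which is annihilated by $d$.

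Combining the two steps finishes the proof. If a horizontal lift exists near every point, then $\psi \equiv 0$ there, hence $f^*\omega = -\tfrac12 d\psi = 0$ on all of $M^n$ and $f$ is Lagrangian. Conversely, if $f$ is Lagrangian then $d\psi = -2\,f^*\omega = 0$, so $\psi$ is closed and the reduction above produces a horizontal lift on every simply connected $U$. I expect the only nontrivial step to be the identity $d\psi = -2\,f^*\omega$; the care there lies in tracking the antisymmetrization, verifying the cancellation of the $\psi_\alpha\psi_\beta$ terms, and fixing the sign convention that identifies the skew part of $h$ with $f^*\omega$. The remaining ingredients are just the Poincaré lemma and Proposition \ref{prop:horizontal}.
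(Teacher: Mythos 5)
Your proposal is correct and follows essentially the same route as the paper: reduce horizontal liftability on a simply connected neighbourhood to the closedness of $\psi$ via Proposition \ref{prop:horizontal} and the Poincar\'e lemma, then identify $d\psi$ with $-2f^*\omega$. The only cosmetic difference is that the paper obtains this identity by computing $d\psi = -\sum_i dx^i \wedge d\chi^i$ and recognizing the restriction of $\hat\omega$ to the image of $d\f$, whereas you re-derive it in coordinates through $h$ and its skew part (effectively the first relation of \eqref{compatibility_n}); both computations are valid and equivalent.
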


\begin{proof}
Let $y \in M^n$ be arbitrary, let $U \subset M^n$ be a simply connected neighbourhood of $y$, and let $\f: U \to \CM$ be a lift of $f$. By Proposition \ref{prop:horizontal} the lift $\f$ can be scaled to a horizontal lift $\f_c: U \to \CM$ if and only if the form $\psi = \langle d_yx, \chi \rangle$ is exact on $U$. Since $U$ is simply connected, this is equivalent to the vanishing of the exterior derivative of $\psi$. We have $\psi = \sum_{i=1}^{n+1}\chi^id_yx^i$, and hence $d\psi = -\sum_{i=1}^{n+1}dx^i \wedge d\chi^i$.

 Let now $Z,Z' \in T_yM^n$ be arbitrary tangent vectors, and $d_y(Z) = (U,\mathcal{U})$, $d_y(Z') = (V,\mathcal V)$ their images in the tangent space to $\CM$ at $\f(y) = (x,\chi)$. By the above we have $d\psi(Z,Z') = -\langle U,\mathcal{V} \rangle + \langle V,\mathcal{U} \rangle = -2\hat{\omega}_{(x,\chi)}((U,\mathcal{U}),(V,\mathcal{V}))$. It follows that $d\psi$ vanishes if and only if the degenerate form $\hat\omega$ vanishes on the image of $d\f$. However, this condition is equivalent to the vanishing of the symplectic form $\omega$ on the image of $df$, or to the condition that $f$ is Lagrangian.
\end{proof}

Combining the lemma just above with Corollary \ref{aboutlifts} we obtain
\begin{prop}
 If $f:  M^n \rightarrow \DP$ is  a Lagrangian immersion, then
  there exists a two-fold cover $\pi: \hat{M}^n \to M^n$ such that the natural lift
  $\hat{f} = f \circ \pi$ of $f$ to $\hat{M}$ has a global  horizontal
 lift $\hat{\f} : \hat{M}^n \rightarrow \CM$.
 \end{prop}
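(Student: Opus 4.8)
The plan is to produce the horizontal lift in two stages: first secure a global lift after passing to a two-fold cover, and then rescale that lift to make it horizontal. For the first stage I would apply Corollary \ref{aboutlifts}(3) directly to the immersion $f$: it yields a two-fold cover $\pi:\hat M^n\to M^n$ together with a global lift $\hat\f:\hat M^n\to\CM$ of the natural lift $\hat f=f\circ\pi$, where $\hat\f$ need not yet be horizontal. Since $\pi$ is a local diffeomorphism and being Lagrangian is a pointwise condition, $\hat f$ is again Lagrangian with respect to the symplectic form $\omega$ of $\DP$.

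For the second stage I would examine the associated $1$-form $\psi$ of \eqref{psi_def} for this global lift and try to scale it away. By Proposition \ref{prop:horizontal} there is a never vanishing $\alpha$ with $\hat\f_\alpha$ horizontal exactly when $\alpha^{-1}d\alpha=-\psi$ is globally solvable, that is, exactly when $\psi$ is exact on $\hat M^n$. The Lagrangian hypothesis enters through Lemma \ref{lem:horizontalLagrangian}: its proof shows that $d\psi$ vanishes on the image of $d\hat f$, so for a Lagrangian $\hat f$ the form $\psi$ is closed. Granting exactness, $\alpha=\exp(-\!\int\psi)$ is a well-defined global function and $\hat\f_\alpha$ is the sought horizontal lift.

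The hard part is precisely the passage from closed to exact. Observing that $\psi$ is the pull-back $\hat\f^*\zeta$ of the contact form $\zeta$ of \eqref{eq:contactform}, a horizontal lift is the same as a Legendrian lift, and the obstruction to its global existence is the holonomy of the flat $\R^\times$-connection that the horizontal distribution $\widehat{\mathcal H}$ induces on $\piH:\CM\to\DP$ after restriction along the Lagrangian $\hat f$. This holonomy factors into a sign part in $\{\pm 1\}$ and a magnitude part $\exp(-\oint\psi)$. The role of the two-fold cover is to kill the sign part: it is nothing but the $\mathbb Z/2$-monodromy of the orientation double cover $S_{2n}$ of \eqref{def:S2} and Proposition \ref{aboutS2n}, which is trivialized on $\hat M^n$ by the very construction of Corollary \ref{aboutlifts}(3). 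The delicate remaining point, where I expect the real effort to lie, is to show that the magnitude part also dies, i.e. that every period $\oint_\gamma\psi$ vanishes; the natural tool is the normalization $\langle x,x\rangle=\langle\chi,\chi\rangle$ of the $S_{2n}$-valued lift $\hat\f$ together with the defining relation $\langle x,\chi\rangle=1$, which should pin $\psi$ down to an exact form and thereby complete the argument.
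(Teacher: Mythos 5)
Your first two stages are, almost word for word, the paper's entire proof: the paper disposes of the proposition in a single sentence, namely ``apply the construction in the proof of Lemma \ref{lem:horizontalLagrangian} to the lift $\hat\f$ in (3) of Corollary \ref{aboutlifts}'', which is exactly your combination of Corollary \ref{aboutlifts}(3) with the closedness of $\psi$ and Proposition \ref{prop:horizontal}. So up to that point you have reconstructed the intended argument. The genuine gap is the one you yourself flag as ``the hard part'': on $\hat M^n$, which is in general not simply connected, closedness of $\psi$ does not give exactness, and Proposition \ref{prop:horizontal} needs exactness. The paper's proof is silent on this point (the simple connectivity used in the proof of Lemma \ref{lem:horizontalLagrangian} is only available locally), so what you have isolated is a gap in the paper's own argument, not a step you failed to reproduce.

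However, your proposed way of closing the gap cannot work. Under any change of lift $(x,\chi)\mapsto(\alpha x,\alpha^{-1}\chi)$ the form $\psi$ changes by the exact form $d\log|\alpha|$ (see \eqref{verticalalpha}), so the periods $\oint_\gamma\psi$ are invariants of $\hat f$, completely independent of the choice of lift; in particular the $S_{2n}$-normalization $\langle x,x\rangle=\langle\chi,\chi\rangle$ differs from an arbitrary lift by such a rescaling and has exactly the same periods, hence it cannot ``pin $\psi$ down to an exact form''. Your holonomy picture is otherwise correct: the two-fold cover kills precisely the sign part, i.e.\ the $\mathbb{Z}/2$-monodromy of $S_{2n}$, but the magnitude part is the lift-independent class $[\psi]\in H^1_{\mathrm{dR}}(\hat M^n,\R)$, and passing to a finite cover can only multiply real periods by the covering degree, never annihilate them. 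In fact no argument can close this gap without an extra hypothesis: already for $n=1$, where every immersed curve is Lagrangian, the closed curve with global lift $x(t)=(\cos t,\sin t)$, $\chi(t)=(\cos t-\sin t,\sin t+\cos t)$ satisfies $\langle x,\chi\rangle=1$ and $\psi=\langle\dot x,\chi\rangle\,dt=dt$, so $\oint\psi=2\pi\neq 0$ and no finite cover of this Lagrangian immersion admits a horizontal lift. Thus the proposition as stated requires an additional assumption (for instance the vanishing of $[\psi]$, or simple connectivity of $\hat M^n$); your instinct that the real effort lies at this point was right, but no normalization of the lift can supply the missing vanishing of the periods.
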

 \begin{proof}
 It suffices to apply the construction in the proof of Lemma \ref{lem:horizontalLagrangian} to the lift $\hat\f$ in (3) of Corollary \ref{aboutlifts}.
 \end{proof}

It would be interesting to understand, what manifolds $M^n$ have horizontal lifts
for all immersions into $\DP$.
 At least locally it is the Lagrangian immersions. The lift is horizontal
 if and only if $\langle d_yx, \chi \rangle \equiv 0$.
 A horizontal lift of a immersion has a close relation to centro-affine immersions,
 that is, the position vector of an immersion transverses to the tangent plane.
\begin{prop}
 Assume  $f: M^n \to \DP$ has a horizontal lift $\f: M_n \rightarrow \CM$, $\f: y \mapsto (x,\chi)$.
 Then the following holds$:$
\begin{enumerate}
\setlength{\itemsep}{0cm}
\renewcommand{\labelenumi}{(\arabic{enumi})}
\item[{\rm (1)}] $x$ is a centro-affine immersion of $M$ into $\mathbb R^{n+1}$ such that $\chi$ is its co-normal map.
\item[{\rm (2)}] $\chi$ is a centro-affine immersion of $M$ into $\mathbb R_{n+1}$ such that $x$ is its co-normal map.
\item[{\rm (3)}] $f: M^n \to \DP$ is Lagrangian, i.e., for every two vector fields $X,Y$ on $M_n$ we have $\omega(f_*X,f_*Y) = 0$.
\end{enumerate}
\end{prop}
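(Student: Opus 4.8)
The plan is to exploit the horizontality condition $\psi \equiv 0$ together with the invertibility of the primal and dual frames $F = (x,\xi_1,\dots,\xi_n)$ and $F^* = (\chi,\eta_1,\dots,\eta_n)$. First I would record that, by Definition \ref{def:horizontal}, horizontality means $\psi \equiv 0$, so that \eqref{dMuX} collapses to $\partial_\alpha x = \xi_\alpha$ and $\partial_\alpha \chi = \eta_\alpha$. Next I would recall the relation $(F^*)^T F = \diag(1,h^T)$, whose determinant equals $\det h$; since $f$ is non-degenerate the tensor $h$ is non-degenerate, hence $\det F \cdot \det F^* = \det h \neq 0$ and both $F$ and $F^*$ are invertible. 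In particular $x,\xi_1,\dots,\xi_n$ form a basis of $\R^{n+1}$ and, dually, $\chi,\eta_1,\dots,\eta_n$ form a basis of $\R_{n+1}$.

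For (1), since $\partial_\alpha x = \xi_\alpha$ and these vectors are linearly independent, $x$ is an immersion whose tangent space at $y$ is $\operatorname{span}\{\xi_1,\dots,\xi_n\}$. The invertibility of $F$ shows that the position vector $x$ is transversal to this tangent space, so $x$ is a centro-affine immersion with the position vector itself as transversal field. It then remains to verify that $\chi$ satisfies the two defining conditions of the associated co-normal map, namely $\langle \chi, \partial_\alpha x\rangle = 0$ for all $\alpha$ and $\langle \chi, x\rangle = 1$. The first holds because $\langle \xi_\alpha, \chi\rangle = 0$ (the forms $\xi$ are horizontal, as follows directly from \eqref{horizontal_forms} and $\langle x,\chi\rangle = 1$), and the second is just the defining equation of $\CM$ in \eqref{quadric}.

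Part (2) is the primal-dual mirror image of (1): with the roles of $x,\xi,F$ and $\chi,\eta,F^*$ interchanged, the same argument---using $\partial_\alpha \chi = \eta_\alpha$, the invertibility of $F^*$, the horizontality relation $\langle x, \eta_\alpha\rangle = 0$, and $\langle x,\chi\rangle = 1$---shows that $\chi$ is a centro-affine immersion into $\R_{n+1}$ with co-normal map $x$. For (3), I would simply invoke Lemma \ref{lem:horizontalLagrangian}: the existence of a global, hence in particular local, horizontal lift around every point is equivalent to $f$ being Lagrangian. Alternatively one can argue directly that $\psi \equiv 0$ forces $d\psi = 0$, and since $d\psi(Z,Z') = -2\hat\omega(d\f(Z), d\f(Z')) = -2\omega(f_*Z,f_*Z')$ as in the proof of that lemma, the symplectic form $\omega$ vanishes on the image of $df$.

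The computations here are all routine once this framework is in place; the only point requiring genuine care---and the main conceptual step---is the transversality assertion in (1) and (2), namely that the position vector is truly transverse to the tangent space rather than merely that $x$ is an immersion. This is precisely where the non-degeneracy of $h$, equivalently of the immersion $f$, enters, through the invertibility of the frames $F$ and $F^*$.
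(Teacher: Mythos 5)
Your proof is correct. For parts (1) and (2) it follows the same basic idea as the paper --- horizontality gives $\partial_\alpha x = \xi_\alpha$ with $\langle \xi_\alpha,\chi\rangle = 0$, and $\langle x,\chi\rangle = 1$ gives transversality --- but you route the transversality and the immersion property through the invertibility of the frames $F,F^*$ via $(F^*)^TF = \diag(1,h^T)$ and $\det h \neq 0$. This is actually a point in your favor: the paper's proof simply speaks of ``the tangent space of the immersion $x$'', tacitly assuming $d_yx$ has full rank, whereas this really does require the standing non-degeneracy assumption (if $f$ were tangent to $\Sigma^-$ somewhere, $x$ would fail to be an immersion there, e.g.\ $f(y) = ([x_0],[\chi(y)])$ with $x_0$ fixed); your frame argument makes explicit exactly where non-degeneracy of $h$ enters. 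For part (3) your route is genuinely different from the paper's: you invoke Lemma \ref{lem:horizontalLagrangian} (or equivalently the computation $d\psi = -2f^*\omega$ from its proof, which together with $\psi \equiv 0$ immediately kills $\omega$ on the image of $df$), whereas the paper instead uses the centro-affine structure just established in (1): it observes that $\langle d_yx(Z), d_y\chi(W)\rangle$ is proportional to the affine fundamental form of the centro-affine immersion $x$, hence symmetric by \cite[Proposition II.5.1]{NomizuSasaki}, so its skew part $\hat\omega$ vanishes. Your version is more self-contained (it reuses a lemma proved two pages earlier rather than an external affine-geometry fact), while the paper's version has the conceptual appeal of deriving the Lagrangian property as a consequence of the centro-affine structure itself; both are sound, and there is no circularity in your citation since the lemma precedes the proposition.
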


\begin{proof}
By \eqref{dectildef} $\f$ is horizontal if and only if $\langle d_yx(Z), \chi \rangle = \langle x, d_y\chi (Z) \rangle = 0$ for all $Z$.
Hence the tangent space of the immersion $x: M^n \to \mathbb R^{n+1}$ is orthogonal to $\chi$. But $\langle x,\chi \rangle \equiv 1$, and the first assertion follows.
The second assertion is proven similarly.

Let us prove the third assertion. Consider the 2-form $h(Z,W) = \langle d_yx(Z),d_y\chi(W) \rangle$ on $M^n$. By \cite[Proposition II.5.1]{NomizuSasaki} this form is proportional to the affine fundamental form induced by the centro-affine immersion $x$, and hence symmetric. Therefore its skew-symmetric part vanishes. In particular, if
 $d\f(Z) = (\widehat{\X},\widehat{\Xt})$, $d\f(W) = (\widehat{\Y},
 \widehat{\Yt})$ are tangent vector fields to the lift $\f$, then the form $\hat\omega(Z,W) = \frac12(\langle \widehat{\X},\widehat{\Yt} \rangle - \langle \widehat{\Y},\widehat{\Xt} \rangle)$ vanishes. But $\hat\omega(Z,W) = \omega(f_*Z,f_*W)$, and hence $f$ is Lagrangian.
\end{proof}
\begin{remark}
 Centro-affine immersions in affine space $\mathbb R^{n+1}$
 are important in affine differential geometry,
 see \cite{NomizuSasaki} for example.
\end{remark}

\section{$k$-symmetric spaces and primitive harmonic maps}\label{app:ksymmetric}
In \cite{DoKoMa19}, \cite{DFKW19}, $k$-symmetric spaces and primitive harmonic maps
have been considered. We will recall basic results for our case.
\subsection{The automorphism $\sigma_H$ and $k$-symmetric spaces} \label{subsc:realform}
 Let us consider the order $6$ automorphism $\sigma_H$ on $\mathfrak g = \sl$ given in
 \eqref{defsigma}, and also consider the order $6$ automorphism  $\sigma_H^G$
 on the connected Lie group $G =  \SL$ defined by
\begin{equation}\label{eq:sigmaonG}
\sigma_H^G(g) =  P_H^{\epsilon} (g^{T})^{-1}P_H^{\epsilon}.
\end{equation}
 By abuse of notation we will also denote $\sigma_H^G$ by $\sigma_H$.
 We then associate the order $3$ and $2$ autmorphisms $\sigma_H^2$ and $\sigma_H^3$
 as
\begin{align}\label{eq:sigma2}
\sigma_H^2 (X) &= P_2 X P_2^{-1},\quad \mbox{with} \quad
 P_2 = \di ( \epsilon^4, \epsilon^2, 1), \\
\label{eq:sigma3}
\sigma_H^3 ( X) & =  -P_3 X^T P_3,
\quad \mbox{with} \quad
P _3=
\begin{pmatrix}
0& 1 & 0\\
1& 0 & 0\\
0&0&-H\\
\end{pmatrix}.
\end{align}
 Note that $\sigma_H^2$ is an inner automorphism and $\sigma_H^3$ is an outer automorphism,
 respectively.
 A straightforward computation shows that
 the eigen-spaces of $\sigma_H$ can be computed as
\begin{align*}
\mathfrak{g}_0&=\left\{
                    \begin{pmatrix}
                    a &  &  \\
                     & -a &  \\
                     &  & 0 \\
                \end{pmatrix}
                \;\middle|\; a\in \C
\right\},
\quad
\mathfrak{g}_1=\left\{\begin{pmatrix}
                    0 & b & 0 \\
                     0 & 0 & a \\
                    -H a & 0 & 0 \\
                  \end{pmatrix}\;\middle|\; a,b\in\C
\right\},
\\
\mathfrak{g}_2&=\left\{
                  \begin{pmatrix}
                    0 & 0& a  \\
                     0& 0 & 0 \\
                     0& H a & 0 \\
                  \end{pmatrix} \;\middle|\; a\in \C
              \right\},
\quad
\mathfrak{g}_3=\left\{
                  \begin{pmatrix}
                    a &  &  \\
                      & a &  \\
                      &  & -2a \\
                  \end{pmatrix} \;\middle|\; a\in \C
              \right\},\\
\mathfrak{g}_4&=\left\{
                  \begin{pmatrix}
                    0 & 0 & 0  \\
                     0& 0 & a \\
                     Ha & 0 & 0 \\
                  \end{pmatrix} \;\middle|\; a\in \C
            \right\},
\quad
\mathfrak{g}_5=\left\{
                  \begin{pmatrix}
                    0 & 0 & -Ha \\
                     b & 0 & 0 \\
                     0 & a & 0 \\
                  \end{pmatrix} \;\middle|\; a, b\in \C
                \right\}.
\end{align*}
The eigen-spaces of $\sigma_H^2$ are given by
$\mathfrak{g}_1 + \mathfrak{g}_4$ for the eigenvalue  $\epsilon^2$,
$\mathfrak{g}_2 + \mathfrak{g}_5$ for the eigenvalue  $\epsilon^4$ and
 $\mathfrak{g}_3 + \mathfrak{g}_0$ for the eigenvalue  $1$.
Similarly the eigen-spaces for $\sigma_H^3$ are
 $\mathfrak{g}_4 + \mathfrak{g}_2 + \mathfrak{g}_0$ for the eigenvalue  $1$
 and
 $\mathfrak{g}_1 + \mathfrak{g}_3 + \mathfrak{g}_5$ for the eigenvalue  $\epsilon^3 =  -1$.
 It is important that the real form involution $\tau$ in \eqref{eq:tau} and the
 order $6$ automorphism $\sigma_H$ in \eqref{defsigma} commute, i.e.,
 $\tau \circ \sigma_H =  \sigma_H \circ \tau$ holds.
 Therefore $\tau$ and the eigen-spaces of $\sigma_H$ obey the relation
\begin{equation}\label{eq:tausigma}
\tau (\mathfrak g_j) = \mathfrak g_{-j}, \quad j=0, 1, \dots, 5.
\end{equation}
 In particular, $\mathfrak g_0$ and $\mathfrak g_0 \oplus \mathfrak g_3$
 are subalgebras of $\slr$ with the obvious complexifications.

 Since $\sigma_H$ and $\tau$ commute, we can give a definition of $k$-symmetric spaces
 as follows.
 \begin{definition}\label{def:k-symmetric}
 Let $G^{\R}/G^{\R}_0$ be a real homogeneous space such that $G^{\R}$ is a
 real form of a complex Lie group $G$ given by a real form involution
 $\tau$, that is, $G^{\R} = \Fi (G, \tau)$.
 Moreover,  let $\sigma$
 be an order $k \;(k\geq 2)$ automorphism  of $G,$ leaving  $G^{\R}$ invariant and commuting with $\tau$.
 Then  $G^{\R}/G^{\R}_0$ is called a \emph{$k$-symmetric space} if
 the following condition is satisfied
\begin{equation}
 \Fi(G^{\R},\sigma)^\circ
 \subset G^{\R}_0 \subset \Fi (G^{\R}, \sigma),
\end{equation}
 where $\Fi(G^{\R}, \sigma)^\circ$ denotes the identity component of $\Fi(G^{\R},
\sigma)$.
\end{definition}

\subsection{Primitive maps and the extended frames}
We now consider a complex Lie group as before and let $\tau$ denote an
anti-holomorphic involution of $G$, and set
\begin{equation*}
G^\R = \Fi(G,\tau) \quad \mbox{and}\quad \operatorname{Lie}G^\R = \mathfrak{g}^\R.
\end{equation*}
\begin{definition}\label{def:primitiveharmonic}
Let $\kappa$ be any automorphism of $\mathfrak{g}$ of finite order $k > 2$.
Let $\mathfrak{g}_m$ denote the  eigen-spaces of $\kappa$, where we choose $m \in \mathbb{Z}$ and actually work with $m \mod k$.
Let $\mathcal{F} : \D \rightarrow G$ be a smooth map. Then $\mathcal{F}$ will be called \emph{primitive relative to $\kappa$} if
\begin{equation*}
\mathcal{F}^{-1} d\mathcal{F} = \alpha_{-1}  dz + \alpha_0^{\prime} dz+\alpha_0^{\prime\prime} d\bar{z}+ \alpha_1 d\bar{z} \in \mathfrak{g}_{-1} +  \mathfrak{g}_0 + \mathfrak{g}_1,
\end{equation*}
where  $\alpha_m$, $\alpha_0^{\prime}$ and $\alpha_0^{\prime\prime}$ take values in
an eigen-space
$\mathfrak{g}_m$ of  $\kappa$.
\end{definition}
By abuse of notation we will also write $\alpha_0 =  \alpha_0^\prime dz + \alpha_0^{\prime \prime}d \bar{z}$.
\begin{lemma}
Let $\mathcal{F}$  be primitive relative to $\kappa$ and let us write
$\mathcal{F}^{-1} d\mathcal{F} = \alpha_{-1} dz+ \alpha_0 + \alpha_1 d \bar{z}$.
Then
$\lambda^{-1} \alpha_{-1} dz+ \alpha_0 + \lambda \alpha_1d \bar{z}$
is integrable for all $\lambda \in \C^*$.
\end{lemma}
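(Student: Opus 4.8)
The plan is to read integrability as the Maurer--Cartan (zero-curvature) equation and to exploit the $\mathbb{Z}/k\mathbb{Z}$-grading coming from $\kappa$. Recall the convention of Section~\ref{sec:surface}: a $\mathfrak{g}$-valued $1$-form $A\,dz + B\,d\bar z$ is integrable exactly when
\begin{equation*}
\partial_z B - \partial_{\bar z} A + [A,B] = 0.
\end{equation*}
For the deformed family I would set $A^{\lambda} = \lambda^{-1}\alpha_{-1} + \alpha_0'$ and $B^{\lambda} = \alpha_0'' + \lambda\alpha_1$, where $\alpha_0 = \alpha_0'\,dz + \alpha_0''\,d\bar z$, so that $\lambda = 1$ returns the honest Maurer--Cartan form $\mathcal{F}^{-1}d\mathcal{F}$. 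The latter is integrable because it is the logarithmic derivative of an actual map $\mathcal{F}$.

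First I would substitute $A^{\lambda}, B^{\lambda}$ into the zero-curvature expression and sort the result by powers of $\lambda$. Only the powers $\lambda^{-1}, \lambda^0, \lambda^1$ occur, with coefficients $-\partial_{\bar z}\alpha_{-1} + [\alpha_{-1},\alpha_0'']$, then $\partial_z\alpha_0'' - \partial_{\bar z}\alpha_0' + [\alpha_{-1},\alpha_1] + [\alpha_0',\alpha_0'']$, and finally $\partial_z\alpha_1 + [\alpha_0',\alpha_1]$, respectively.

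The decisive observation is that these three coefficients lie in mutually distinct eigenspaces of $\kappa$. By primitivity $\alpha_{-1}\in\mathfrak{g}_{-1}$, $\alpha_0',\alpha_0''\in\mathfrak{g}_0$, and $\alpha_1\in\mathfrak{g}_1$; the partial derivatives preserve each eigenspace, and, since $\kappa$ is an automorphism, $[\mathfrak{g}_m,\mathfrak{g}_n]\subseteq\mathfrak{g}_{m+n}$ with indices read modulo $k$. A short bookkeeping then places the $\lambda^{-1}$-, $\lambda^0$-, and $\lambda^1$-coefficients in $\mathfrak{g}_{-1}$, $\mathfrak{g}_0$, $\mathfrak{g}_1$, respectively. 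Here the hypothesis $k > 2$ is essential: it guarantees that $-1, 0, 1$ are pairwise distinct modulo $k$, so $\mathfrak{g}_{-1}\oplus\mathfrak{g}_0\oplus\mathfrak{g}_1$ is a direct sum.

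To conclude, the integrability at $\lambda = 1$ asserts precisely that the sum of the three coefficients vanishes; by the directness of the decomposition each coefficient must vanish separately. Since the zero-curvature expression for a general $\lambda$ is $\lambda^{-1}$ times the first coefficient plus the second plus $\lambda$ times the third, it then vanishes identically in $\lambda$, giving integrability for every $\lambda\in\C^{*}$. I expect the only delicate point to be this separation-by-eigenspace step, and in particular the role of $k>2$: for $k=2$ the eigenspaces $\mathfrak{g}_{-1}$ and $\mathfrak{g}_1$ would coincide, the $\lambda^{\pm1}$-coefficients could mix, and the argument would break down.
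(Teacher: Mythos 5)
Your proof is correct and is essentially the paper's own argument: the paper's proof consists precisely of the ``straightforward computation'' you carry out (sorting the zero-curvature expression by powers of $\lambda$) together with the observation that for $k>2$ the sum $\mathfrak{g}_{-1}+\mathfrak{g}_0+\mathfrak{g}_1$ is direct, so vanishing at $\lambda=1$ forces each graded coefficient to vanish separately. Your expanded bookkeeping of the three coefficients and your remark on why $k=2$ fails are exactly the content the paper leaves implicit.
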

\begin{proof}
Together with a  straightforward computation one needs to use that because of $k>2$ the sum $\mathfrak{g}_{-1} + \mathfrak{g}_0 + \mathfrak{g}_1$ of eigen-spaces is direct.
\end{proof}
The importance of this observation has been elaborated on and explained in
\cite[Section 3.2]{BP} and \cite{B}.
\begin{theorem}[\cite{BP, B}] \label{theoremharmonicity}
Let $G$ be a complex Lie group,   $\sigma_H$ an automorphism of $G$ of finite order $k \geq 2$
and $\tau$ an anti-holomorphic involution of $G$ which commutes with  $\sigma_H$.
Let $G^\R_0$ be any Lie subgroup of $G^\R$ satisfying
 $\Fi (G^\R, \sigma_H)^{\circ} \subset {G^\R_0} \subset \Fi (G^\R, \sigma_H)$.
Then we consider the $k$-symmetric space $G^\R / {G^\R_0}$ together with the (pseudo-)Riemannian structure induced by some bi-invariant (pseudo-)Riemannian structure on $G^\R$.
Let $h:\D \rightarrow G^\R / {G^\R_0}$ be a %conformal (\magenta {???})
smooth map and
$\mathcal{F} : \D \rightarrow G^\R$ a frame for $h$, i.e., $h = \pi \circ \mathcal{F}$, where
$\pi: G^\R \rightarrow G^{\R}/{G^\R_0}$ denotes the canonical projection.

Then the following statements hold$:$
\begin{enumerate}
\setlength{\itemsep}{0cm}
\renewcommand{\labelenumi}{(\arabic{enumi})}
 \item[{\rm (1)}]   If $k=2$, then $h$ is harmonic if and only if  $\lambda^{-1} \alpha_{-1} dz+ \alpha_0 + \lambda \alpha_1 d\bar{z}$ is integrable for all $\lambda \in \C^*$.

\item[{\rm (2)}] If $k>2$, then $h$ is harmonic  if $\mathcal{F}$ is primitive relative
to  $ \sigma_H$.
\end{enumerate}
\end{theorem}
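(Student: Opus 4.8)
The plan is to reconstruct the spectral-parameter (zero-curvature) description of harmonic maps of \cite{BP, B} in the present setting. The first step is to express harmonicity of $h=\pi\circ\mathcal F$ through its frame. Writing $\alpha=\mathcal F^{-1}d\mathcal F$ and splitting it according to the reductive decomposition $\mathfrak g^\R=\mathfrak g^\R_0\oplus\mathfrak m$ with $\mathfrak m=\bigoplus_{m\neq 0}\mathfrak g_m$, say $\alpha=\alpha_0+\beta$ with $\alpha_0$ the $\mathfrak g^\R_0$-part, I would show that for the metric induced by a bi-invariant structure on $G^\R$ the map $h$ is harmonic if and only if $d^\nabla\ast\beta=0$, where $\nabla=d+\operatorname{ad}\alpha_0$ and $\ast$ is the Hodge operator on $\D$. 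On a Riemann surface this is, after splitting $\beta=\beta'+\beta''$ into its $(1,0)$- and $(0,1)$-parts, the single equation $\bar\partial^\nabla\beta'+\partial^\nabla\beta''=0$. Since both harmonicity and this condition are invariant under the gauge freedom $\mathcal F\mapsto\mathcal F\cdot k$, $k\colon\D\to G^\R_0$, it suffices to argue for one frame.

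The core is the $\lambda$-expansion of the zero-curvature condition for $\alpha_\lambda=\lambda^{-1}\beta'+\alpha_0+\lambda\beta''$. Expanding $d\alpha_\lambda+\tfrac12[\alpha_\lambda,\alpha_\lambda]=0$ and collecting powers of $\lambda$, I expect the coefficient of $\lambda^{-1}$ to be $\bar\partial^\nabla\beta'$, that of $\lambda^{1}$ to be $\partial^\nabla\beta''$, and that of $\lambda^{0}$ to be a curvature relation $F^\nabla+[\beta'\wedge\beta'']$ valued in $\mathfrak g_0$. This last coefficient vanishes automatically, being exactly the $\mathfrak g_0$-component of the Maurer--Cartan equation satisfied by the genuine form $\alpha$. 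I would also record the $\mathfrak m$-component of the same Maurer--Cartan equation, namely $\partial^\nabla\beta''=\bar\partial^\nabla\beta'$; this relation always holds and does the decisive work below.

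For $k=2$ the exponents $\lambda^{-1},\lambda^{0},\lambda^{1}$ are distinct, so $\alpha_\lambda$ is flat for all $\lambda\in\C^*$ if and only if each coefficient vanishes, that is, if and only if $\bar\partial^\nabla\beta'=0$ and $\partial^\nabla\beta''=0$. These two together give harmonicity $\bar\partial^\nabla\beta'+\partial^\nabla\beta''=0$. Conversely, harmonicity makes the sum vanish, while the Maurer--Cartan relation $\partial^\nabla\beta''=\bar\partial^\nabla\beta'$ makes the difference vanish; hence each term vanishes separately and $\alpha_\lambda$ is flat for all $\lambda$. This proves the equivalence in (1).

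For $k>2$ and $\mathcal F$ primitive relative to $\sigma_H$ the Maurer--Cartan form lies in $\mathfrak g_{-1}\oplus\mathfrak g_0\oplus\mathfrak g_1$, so that $\beta'=\alpha_{-1}\,dz$ and $\beta''=\alpha_1\,d\bar z$, and the preceding Lemma already yields that $\alpha_\lambda$ is flat for all $\lambda$. Reading off the $\lambda^{\pm1}$ coefficients gives $\bar\partial^\nabla\beta'=0$ and $\partial^\nabla\beta''=0$, hence $d^\nabla\ast\beta=0$ and $h$ is harmonic; here $k>2$ is what makes the three eigenspaces distinct, so that the integrable loop of the Lemma is well defined. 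The step I expect to be the real obstacle is not this bookkeeping but the identification in the first paragraph: verifying that, for the non-symmetric reductive target ($k>2$) with the chosen bi-invariant metric, the tension field of $h$ is precisely the codifferential $d^\nabla\ast\beta$ of the $\mathfrak m$-valued part, with no correction terms arising from the gap between the Levi-Civita and canonical connections. Establishing natural reductivity of the homogeneous metric, so that these corrections cancel, is the crux; once it is in place the remaining arguments are the routine expansions above.
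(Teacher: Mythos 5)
The paper contains no proof of this theorem: it is imported from \cite{BP} and \cite{B} (hence the citation in the theorem header), and the only ingredient the paper itself proves is the lemma immediately preceding it, namely that primitivity of $\mathcal F$ makes $\lambda^{-1}\alpha_{-1}\,dz+\alpha_0+\lambda\alpha_1\,d\bar z$ integrable for all $\lambda\in\C^*$ because $\mathfrak{g}_{-1}+\mathfrak{g}_0+\mathfrak{g}_1$ is a direct sum when $k>2$. So there is no in-paper argument to compare against; what you have reconstructed is the standard proof from the cited references, and it is correct. Your bookkeeping is the right one: the $\lambda^{\pm1}$ coefficients of the zero-curvature expansion are the two operators $\nabla_{\bar z}\beta_z$ and $\nabla_z\beta_{\bar z}$ (with $\nabla=d+\mathrm{ad}\,\alpha_0$), the $\lambda^{0}$ coefficient is the $\mathfrak g_0$-component of the Maurer--Cartan equation of the genuine form $\alpha$ and hence vanishes for free, the $\mathfrak m$-component of that same equation supplies one linear relation between the two operators, harmonicity supplies the complementary one, and flatness for all $\lambda$ is equivalent to both operators vanishing; this gives the equivalence in (1), while for (2) primitivity plus the paper's lemma forces both to vanish at once, and you correctly note that $k>2$ is what makes the loop well defined. (One caution: whether the Maurer--Cartan relation is the ``sum'' and harmonicity the ``difference'' or vice versa depends on whether you read $\bar\partial^\nabla\beta'$ as a $2$-form or as the component $\nabla_{\bar z}\beta_z$, and on the sign convention $*dz=\mp i\,dz$; since the two relations are independent linear combinations of the same two quantities, this does not affect the logic, but you should fix one convention when writing it up.) You also place the real content exactly where it is in \cite{B}: the identification of harmonicity with $d^\nabla{*\beta}=0$ holds because the metric induced by a bi-invariant form is naturally reductive, so the difference tensor between the Levi-Civita and canonical connections is skew-symmetric and drops out of the metric trace defining the tension field; with the Killing-type form the eigenspace $\mathfrak g_j$ pairs non-degenerately with $\mathfrak g_{-j}$, which also gives the required non-degeneracy on $\mathfrak m=\bigoplus_{j\neq0}\mathfrak g_j$. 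Once that is in place, your remaining steps are exactly the expansions carried out in the references the paper defers to.
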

 From the above theorem, we have the following definition.
\begin{definition}
 Retain the notation in Theorem \ref{theoremharmonicity}.
\begin{enumerate}
\setlength{\itemsep}{0cm}
\renewcommand{\labelenumi}{(\arabic{enumi})}
 \item
 The frame $\mathcal F$ is called \emph{primitive harmonic}, if
 $\mathcal{F}^{-1} d\mathcal{F} =
 \alpha_{-1}dz  + \alpha_0 + \alpha_1 d \bar{z} $ such that
 $\lambda^{-1} \alpha_{-1} dz+ \alpha_0 + \lambda \alpha_1 d\bar{z}$
 is integrable for all $\lambda \in \C^*$.
\item  The map $h$ is called
 \emph{primitive harmonic map}, if the frame $\mathcal F$ is
 primitive harmonic.
\end{enumerate}
\end{definition}
This admits a direct application of the \emph{loop group method}, see \cite{DPW}.
Since $\tau$ maps $\mathfrak{g}_m$ to $\mathfrak{g}_{-m}$, we can assume that
$\mathcal{F}_\lambda$ is contained in $G^\R$ for all $\lambda \in S^1$.
 We will usually also assume $\mathcal{F}(z_0,\lambda) = I$ for a once and for all fixed base point $z_0$.

Then it follows from the above that also $h_\lambda = \mathcal{F}_\lambda \mod G^\R_0$ is a primitive harmonic map with frame $\mathcal{F}_\lambda$.
Usually  $\mathcal{F}_\lambda$ is called \emph{an extended frame} for $h$.

\section{Various bundles}\label{app:bundles}
 This section is an adaption of Section 3 in \cite{DoKoMa19} to
 our case.
 In \cite[Section 3]{DoKoMa19}, three $6$-symmetric spaces
 of dimension $7$ which are bundles over $S^5$ were defined.
 We analogously define three $6$-symmetric spaces
 of dimension $7$ which are bundles over $\CMf$, $FL_1^H$, $FL_2^H$, and $FL_3^H$.

 Recall the para-hermitian inner product of $\PC^3$
 with a para-Hermitian form
\begin{equation}
 \langle u, v \rangle_h = u^{*T} P_H v, \quad
  P_H =
\begin{pmatrix}
 0 & 1 & 0 \\
 1 & 0 & 0 \\
 0 & 0 & -H
\end{pmatrix},
\end{equation}
 where $H=1$ (resp. $H=-1$) for the ellipic (resp. hyperbolic) case
 and $*$ denotes the para-complex conjugate of a paracomplex vector in $\PC^3$.
 It is invariant under the transformation
\begin{equation}\label{eq:action}
(\PC)^3 \ni u \mapsto \left( \frac{1+i^{\prime}}{2}A + \frac{1-i^{\prime}}{2}A^{-T} \right)u \in (\PC)^3,
\quad
 A \in \GLR.
\end{equation}
 We first choose a natural basis  of $\PC^3$:
 \[
  e_1 = (1, 0, 0)^T,\quad  e_2 = (0, 1, 0)^T,\quad  e_3= (0, 0, 1)^T.
 \]

 $(1) \; FL_1^H:$
 For the real $6$-dimensional symplectic vector space
 $\PC^3$ given by the symplectic form $\omega = - \Im \langle \;,\; \rangle_h$,
 the family of (real) oriented Lagrangian subspaces of  $\PC^3$
 forms a submanifold of the manifold of real Grassmannian $3$-spaces of $\PC^3$.
 They are the  \emph{ Grassmannian manifold} $\LGr(3, \PC^3)$ of oriented Lagrangian subspaces.  It is easy to see that
 $\LGr(3, \PC^3)$ can be represented as the homogeneous space
 $\GLR / \O$.
 The special orthogonal matrix group $\SO$ as the
 connected subgroup of $\SLR$
 corresponding to the sub-Lie-algebra  of $\slr$  given by
\[
 \so = \left\{
 \begin{pmatrix}
  ia & 0 & \sqrt{-H} b \\
 0 & -ia & \sqrt{-H} \bar b \\
 -\sqrt{-H}^{-1}\bar b & -\sqrt{-H}^{-1}b  & 0
 \end{pmatrix}\;\middle|\;
 a \in \R, b \in  \C
\right\}\subset  \slr,
\]
 where $H=1$ (resp. $H=-1$) for the ellipic (resp. hyperbolic) case,
 which is isomorphic to the standard $\soStan$ by
 the automorphism $X \mapsto \Ad (R_H)(X)$ with
\[
 R_H=\begin{pmatrix}
\frac1{\sqrt{2}} & \frac{1}{\sqrt{
2}}&0 \\
\frac{i}{\sqrt{2}}&-\frac{i}{\sqrt{2}}&0 \\
 0&0& \sqrt{-H}
 \end{pmatrix}.
\]
 The orbit of $\SLR$ in $\LGr(3, \PC^3)$ through the point $e \in \SO$ will be  called
 the \emph{special Lagrangian Grassmannian} and it will be denoted by
 $\SLGr(3, \PC^3)$.
 The elements in this orbit will be called \emph{oriented  special Lagrangian
subspaces} of $\PC^3$. Thus we have the following:
\begin{prop}\label{Prp:Grass}
  $\SLR$ acts transitively on  $\SLGr(3, \PC^3)$,
  and we obtain
 \[
\SLGr(3, \PC^3) = \SLR / \SO.
 \]
The base point $e\in \SO$ corresponds to the real Lagrangian subspace of $\PC^3$ given by
 $R_H^{-1} \R^3$.
\end{prop}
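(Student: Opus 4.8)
The plan is to read off the first two assertions at the level of cosets, and to establish the third by transporting the base Lagrangian subspace to the standard para-Hermitian picture via $R_H$. By construction $\SLGr(3,\PC^3)$ is the orbit of $\SLR$ through the base point $e\O$ of $\LGr(3,\PC^3)=\GLR/\O$, where $\SLR\subset\GLR$ acts by left translation. Hence $\SLR$ acts transitively on $\SLGr(3,\PC^3)$ by the very definition of an orbit, and this orbit is the image of $\SLR$ in $\GLR/\O$, namely $\SLR/(\SLR\cap\O)$. It therefore only remains to compute the isotropy subgroup $\SLR\cap\O$ and to identify the base point as a subspace.

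For the isotropy subgroup I would use that, by definition \eqref{eq:SLRt}, $\Ad(R_H)$ maps $\SLR$ isomorphically onto $\mathrm{SL}_3\mathbb R$, and likewise $\O$ onto the standard $\mathrm{O}_3$ (its Lie algebra $\so$ being carried to $\soStan$ by $X\mapsto\Ad(R_H)(X)$). Since every element of $\mathrm{O}_3$ has determinant $\pm1$, one has $\mathrm{SL}_3\mathbb R\cap\mathrm{O}_3=\mathrm{SO}_3$, and transporting back by $\Ad(R_H)^{-1}$ yields $\SLR\cap\O=\SO$. Consequently $\SLGr(3,\PC^3)=\SLR/\SO$, which is the second assertion.

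To identify the base point I would pass to the standard picture, where, under the idempotent decomposition \eqref{eq:correspondence} $\PC^3\cong\mathbb R^3\oplus\mathbb R^3$, a matrix $A\in\mathrm{GL}_3\mathbb R$ acts by the diagonal contragredient action $(x,\chi)\mapsto(Ax,A^{-T}\chi)$ coming from \eqref{eq:action}. The real subspace $\mathbb R^3=\{u\in\PC^3\mid u^*=u\}=\{(x,x)\}$ is Lagrangian, since $\langle u,v\rangle_h=u^Tv\in\mathbb R$ for real $u,v$ and hence $\omega=-\Im\langle\cdot,\cdot\rangle_h$ vanishes on it; moreover $A$ preserves $\{(x,x)\}$ iff $A=A^{-T}$, i.e.\ $A\in\mathrm{O}_3$, so $\mathbb R^3$ is exactly the base point of $\mathrm{GL}_3\mathbb R/\mathrm{O}_3$ (consistent with $\LGr(3,\PC^3)=\GLR/\O$). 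Transporting this subspace back by $R_H^{-1}$ identifies the base point $e\O$ with $R_H^{-1}\mathbb R^3$, which is the third assertion. The one genuinely delicate point---the main obstacle---is to check that $R_H$ really intertwines the $P_H$-action \eqref{eq:action} with the standard diagonal action: since $R_H$ carries the ordinary imaginary unit $i$ in its entries while $\PC^3$ carries the independent para-complex conjugation $*$, one must match these two conjugations against the relation $R_H^TR_H=P_H$. Once this intertwining is verified, all three statements follow formally from the orbit--stabilizer theorem.
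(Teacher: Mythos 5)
Your handling of the first two assertions is correct and complete, and it is exactly the argument the paper leaves implicit: the paper offers no proof at all (the proposition is stated as an immediate consequence of defining $\SLGr(3,\PC^3)$ as the $\SLR$-orbit through the base coset of $\LGr(3,\PC^3)=\GLR/\O$), so transitivity is definitional, and your isotropy computation $\SLR\cap\O=\Ad(R_H)^{-1}\bigl(\mathrm{SL}_3\R\cap \mathrm{O}_3\bigr)=\Ad(R_H)^{-1}(\mathrm{SO}_3)=\SO$ via orbit--stabilizer supplies what the paper omits.

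The genuine gap is in the third assertion, and it sits precisely at the step you flag and then defer (``once this intertwining is verified\dots''): that step is not a formality, because under one of its two natural readings the assertion is \emph{false}. If ``transporting back by $R_H^{-1}$'' means ordinary matrix multiplication, i.e.\ $L_1=\{R_H^{-1}x \mid x\in\R^3\}$ with equal idempotent components, then the stabilizer of $L_1$ under the action \eqref{eq:action} is obtained from the condition $AR_H^{-1}x=A^{-T}R_H^{-1}x$ for all $x$, i.e.\ it is $\{A\in\GLR\mid A^TA=I\}$, which is \emph{not} $\O=\{A\in\GLR\mid A^TP_HA=P_H\}=\Ad(R_H)^{-1}\mathrm{O}_3$. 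Concretely, $\diag(e^{i\theta},e^{-i\theta},1)\in\Uone\subset\SO$ moves $L_1$, whereas the one-parameter group $\bigl(\begin{smallmatrix}\cosh t& i\sinh t&0\\ -i\sinh t&\cosh t&0\\0&0&1\end{smallmatrix}\bigr)\subset\SLR$ satisfies $A^TA=I$, hence fixes $L_1$, but does not lie in $\O$; so $L_1$ is a Lagrangian plane with noncompact isotropy group, lying in a different $\GLR$-orbit, and is not the base point. What the proposition requires is transport by the transformation \eqref{eq:action} itself evaluated at $R_H^{-1}$: writing $\rho(B)=\frac{1+\ip}{2}B+\frac{1-\ip}{2}B^{-T}$, the base point is
\[
L_2=\rho(R_H^{-1})\,\R^3=\Bigl\{\tfrac{1+\ip}{2}R_H^{-1}x+\tfrac{1-\ip}{2}R_H^{T}x \;\Big|\; x\in\R^3\Bigr\}.
\]
Since $\rho$ is multiplicative, $\rho(R_H)\rho(A)\rho(R_H)^{-1}=\rho(\Ad(R_H)A)$, so the intertwining you ask for is automatic, and $A\in\GLR$ stabilizes $L_2$ if and only if $\Ad(R_H)A\in\mathrm{Stab}_{\mathrm{GL}_3\R}(\R^3)=\mathrm{O}_3$, i.e.\ $A\in\O$; the relations $R_H^TR_H=P_H$ and the unitarity of $R_H$ (equivalently $\bar R_H=R_HP_H$) are needed only to check that $L_2$ really is a real Lagrangian subspace in the $P_H$-picture. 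So your plan and your standard-picture computation $\mathrm{Stab}_{\mathrm{GL}_3\R}(\R^3)=\mathrm{O}_3$ are right, but the deferred ``delicate point'' is the entire content of the third assertion, and as written that part of your proof is incomplete --- and would fail outright under the literal reading of $R_H^{-1}\R^3$.
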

We now define a bundle
\begin{equation*}
FL_1^H = \{ (v,V)\;|\; v \in \CMf, \; v \in V, \;
 V  \in  \SLGr(3, \PC^3)\}.
\end{equation*}
 It is easy to verify that $\SLR$ acts (diagonally) on $FL_1^H$.
 Note that the natural projection from $FL_1^H$ to $\DP$ is a
 pseudo-Riemannian submersion which is equivariant under the natural group actions.
 Since $\CMf = \SLR/\SLt$, where  $\SLt$ means
 $\SLt \times \{1\}$,
 the stabilizer at
 \[
  (e_3, \operatorname{span}_{\R}\{e_1, e_2,  e_3 \}) \in FL_1^H
 \]
  is clearly given by
 $\SLt \cap \SO$, that is
 \[
  \Uone =\{(a, a^{-1}, 1)\;|\; a \in S^1\}.
 \]
 Therefore
\[
 FL_1^H = \SLR/\Uone.
\]

 $(2)\; FL_2:$
 To define $FL_2^H$,
 we consider certain \emph{special regular para-complex flags} in $\PC^3$.
 A regular para-complex flag  $\mathcal{Q}$ is a sequence of four
 para-complex subspaces,
 $Q_0 = \{0\} \subset Q_1 \subset Q_2 \subset  Q_3 = \PC^3$
 of $\PC^3$,
 where $Q_j$ has  para-complex dimension $j$.
 We then define the notion of a \emph{special regular para-complex flag}
 in $\PC^3$ over $q \in \CMf$
 by requiring that we have a  regular complex flag in $\PC^3$,
 where the space $Q_1$  satisfies $Q_1 = \PC q$.
 Thus we define
 \begin{equation*}
FL_2^H =
 \left\{ (w,\mathcal{W})\;\middle|  \;
\begin{array}{l}
\text{$w \in \CMf$, $\mathcal W$ is
 a special regular para-complex}  \\
\text{ flag over $w$ in $\PC^3$ satisfying $W_1 = \PC w$}
\end{array}
 \right\}.
\end{equation*}
  The definition of a special flag means that  for a given vector $q \neq 0$ in $\PC^3$ one can find three pairwise orthogonal vectors $q_1, q_2, q_3 \in \PC^3$ with
 $q_3 = \frac{q}{|q|}$ such that
the vectors $q_1,q_2$ and $q_3$ represent the same orientation as $e_1, e_2, e_3$.
 By an argument similar to the previous case we conclude
 that  $\SLR$  acts transitively on the family of special flags.
 Moreover, the stabilizer of the action at the point
 $(e_3, 0 \subset \PC e_3 \subset  \PC e_3 \oplus \PC e_2
 \subset  \PC e_3 \oplus \PC e_2  \oplus \PC e_1)$
 is again given by $\SO \cap \di$, where $\di$ denotes the set of all
 diagonal matrices in $\SLR$. Thus it is again $\Uone$ and we have altogether shown
\begin{prop}\label{Prp:Flags}
 $\SLR$ acts transitively on $FL_2^H$, and $FL_2^H$ can be
 represented as
\[
  FL_2^H = \SLR / \Uone.
\]
 \end{prop}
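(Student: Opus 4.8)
The plan is to prove both assertions simultaneously by realizing $FL_2^H$ as a single $\SLR$-orbit and computing the isotropy group, in exact parallel with the treatment of the special Lagrangian Grassmannian in Proposition~\ref{Prp:Grass}. I fix as base point the standard special flag
\[
 \mathfrak{o} = \bigl(e_3,\; 0 \subset \PC e_3 \subset \PC e_3\oplus \PC e_2 \subset \PC e_3\oplus \PC e_2 \oplus \PC e_1\bigr),
\]
whose adapted orthogonal frame is the standard basis $(e_1,e_2,e_3)$. Since $\SLR$ preserves the para-Hermitian form $\langle\cdot,\cdot\rangle_h$ (by the Remark following \eqref{eq:hinn}) and acts on $\CMf$, it acts diagonally on $FL_2^H$; it then remains only to check transitivity of this action and to identify $\mathrm{Stab}(\mathfrak{o})$.

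For transitivity I would argue exactly as in Proposition~\ref{Prp:Grass}. Given an arbitrary $(w,\mathcal W)\in FL_2^H$, the defining property of a special flag provides an adapted orthogonal frame $(q_1,q_2,q_3)$ with $q_3 = w/|w|$ inducing the same orientation as $(e_1,e_2,e_3)$, so that $W_1 = \PC q_3$ and $W_2 = \PC q_3\oplus\PC q_2$. The unique linear map sending $e_i \mapsto q_i$ preserves $\langle\cdot,\cdot\rangle_h$ (both frames being orthonormal of the standard type) and has determinant $+1$; hence it lies in $\SLR$ and carries $\mathfrak{o}$ to $(w,\mathcal W)$. This is the step that reuses the frame-transitivity already established for the Grassmannian, so I would simply invoke that argument.

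For the stabilizer, let $g\in\SLR$ fix $\mathfrak{o}$. Fixing the base point forces $g$ to fix $e_3$ (the normalization on $\CMf$ removing the scalar ambiguity), so the third column of $g$ is $e_3$; reading off the real-form shape \eqref{eq:SLR} gives $c=0$ and $e=1$. Preserving $W_2 = \{v : v_1 = 0\}$ forces $(g e_2)_1 = 0$, i.e.\ the $(1,2)$-entry $b$ vanishes, so that $g$ is reduced to $\di(a,\bar a,1)$ up to possible entries $d,\bar d$ in its bottom row. Finally, since every element of $\SLR$ preserves $\langle\cdot,\cdot\rangle_h$, the relation $\langle g e_1, g e_3\rangle_h = \langle e_1,e_3\rangle_h = 0$ combined with $g e_3 = e_3$ and $\langle e_3,e_3\rangle_h = -H\neq 0$ forces $d=0$, leaving $g = \di(a,\bar a,1)$; the determinant condition $|a|^2=1$ then gives $a\in S^1$ and $\bar a = a^{-1}$. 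Thus $\mathrm{Stab}(\mathfrak{o}) = \SO\cap\di = \Uone$, and the orbit–stabilizer theorem yields $FL_2^H = \SLR/\Uone$.

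The main obstacle is precisely this stabilizer computation, and the reason it requires care is that $\SLR\cong\mathrm{SL}_3\R$ is non-compact: flag-preservation alone only confines $g$ to a triangular (parabolic) subgroup rather than to the diagonal torus, so the naive ``triangular $+$ unitary $=$ diagonal'' heuristic valid in the definite compact case does not apply directly. The decisive inputs that collapse the remaining triangular freedom to $\Uone$ are the rigidity of the real form \eqref{eq:SLR}, which ties the $(2,1)$-entry to the conjugate of the $(1,2)$-entry and thereby kills $b$ cleanly, together with the automatic invariance of $\langle\cdot,\cdot\rangle_h$, which kills the bottom-row entry $d$. I would make sure to verify explicitly that these two conditions eliminate \emph{both} off-diagonal entries, since this is the only genuinely non-formal point in the argument.
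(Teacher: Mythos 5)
Your overall route is the paper's own: the same base point $\mathfrak{o}$, transitivity via adapted orthogonal frames exactly as in Proposition \ref{Prp:Grass}, and then a stabilizer computation identifying $\SO\cap\di=\Uone$. Since the paper disposes of the stabilizer in a single unexplained line, filling in the matrix computation is the right instinct, and your steps $c=0$, $e=1$ (from the base point) and $b=\bar b=0$ (from flag invariance combined with the real-form symmetry of \eqref{eq:SLR}) are correct necessary conditions; the final answer $\Uone$ is also correct.

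However, the step you yourself single out as the only delicate one — killing the bottom-row entry $d$ — does not work as written, because its two premises are mutually inconsistent. Throughout you let $g\in\SLR$ act on $\PC^3$ by plain matrix multiplication (so that $ge_i$ is the $i$-th column of $g$), and then you invoke invariance of $\langle\cdot,\cdot\rangle_h$ under $\SLR$. Under plain matrix multiplication this form is \emph{not} $\SLR$-invariant: for real $d\neq 0$ the matrix
\[
g=\begin{pmatrix}1&0&0\\ 0&1&0\\ \sqrt{-H}\,d&\sqrt{-H}\,d&1\end{pmatrix}
\]
has the shape \eqref{eq:SLR} (with $a=e=1$, $b=c=0$, $d=\bar d$), hence lies in $\SLR$; it fixes $e_3$ and maps $e_2$ into $\PC e_3\oplus\PC e_2$, so it passes both of your first two tests, and yet $\langle ge_1,ge_3\rangle_h=(ge_1)^{*T}P_He_3=-H\sqrt{-H}\,d\neq 0=\langle e_1,e_3\rangle_h$. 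So with your reading of the action these unipotent elements survive every legitimate condition, and the stabilizer would be the three-dimensional parabolic-type group you were worried about, not $\Uone$. The resolution is that the $\SLR$-action on para-complex objects is not plain matrix multiplication but the twisted, contragredient-paired action — cf.\ \eqref{eq:action}, or equivalently $(x,\chi)\mapsto(gx,g^*\chi)$ as in \eqref{contragredient} — under which the two $\ip$-eigencomponents transform by $g$ and by $\bar g^{-T}$ respectively; this is precisely what makes the invariance remark after \eqref{eq:hinn} true. Under that action, fixing the base point $e_3\in\CMf$ forces both $ge_3=e_3$ and $g^{T}e_3=e_3$, i.e.\ the third column \emph{and} the third row of $g$ equal $(0,0,1)^T$ — this is the paper's statement $\CMf=\SLR/(\SLt\times\{1\})$, used in its treatment of $FL_1^H$ — and this kills $c$ and $d$ and gives $e=1$ in one stroke. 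Flag invariance then kills $b$, and $\det g=|a|^2=1$ yields $\Uone$. So the gap is genuine but easily repaired: replace the form-invariance argument by the correct two-sided point-stabilizer condition on $\CMf$.
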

 Note that the natural projection from $FL_2^H$ to $\DPt$
 is a pseudo-Riemannian submersion which is equivariant under the natural group actions.

 $(3)\; FL_3^H:$ Finally, using the isometry group $\SLR$ of $\CMf$,
 we can directly define a homogeneous space $FL_3^H$ as
\begin{equation}\label{eq:FL3}
FL_3^H = \left\{ U P_H^{\epsilon} \;U^T\;\middle|\; \mbox{$U \in \SLR$ and $P_H^{\epsilon}
 =\begin{pmatrix}
   0 & \epsilon^2 & 0 \\
   \epsilon^4 &0 & 0 \\
   0 & 0 & -H
  \end{pmatrix}$}\right\},
\end{equation}
 where $\epsilon = e^{\pi i/3}$ and $H =\pm 1$.
\begin{theorem}\label{Thm:3.3}
We  retain the assumptions and the notion above.
 Then the following statements hold$:$
\begin{enumerate}
\setlength{\itemsep}{0cm}
\renewcommand{\labelenumi}{(\arabic{enumi})}
\item[{\rm (1)}] The spaces $FL_j^H$  $(j = 1,2,3)$ are homogeneous under the natural action of
 $\SLR$.
\item[{\rm (2)}] The homogeneous space $FL_j^H$ $(j = 1,2,3)$ can be represented as
 \[
       FL_j^H = \SLR/ \Uone,
 \quad \mbox{where}\quad
\Uone = \{ \di (a,a^{-1}, 1)\;|\;
 a\in S^1\}.
 \]
 In particular they are all $7$-dimensional.
\end{enumerate}
\end{theorem}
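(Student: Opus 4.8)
The plan is to treat the three spaces by a single template: exhibit each $FL_j^H$ as a transitive orbit of $\SLR$ and identify the isotropy subgroup at a convenient base point with $\Uone$. Parts (1) and (2), the dimension count, and the mutual equivariant diffeomorphisms then all follow formally. For $j=1,2$ most of the work is already in place: Proposition \ref{Prp:Grass} gives $\SLGr(3,\PC^3)=\SLR/\SO$ and the ensuing discussion identifies the stabiliser of $(e_3,\operatorname{span}_{\R}\{e_1,e_2,e_3\})\in FL_1^H$ as $\SLt\cap\SO=\Uone$, while Proposition \ref{Prp:Flags} identifies the stabiliser of the standard special flag in $FL_2^H$ as $\SO\cap\di=\Uone$. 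Thus (1) and (2) for $j=1,2$ are immediate, and the genuinely new content is the case $j=3$.

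For $FL_3^H$ homogeneity is built into the definition: $FL_3^H=\{U P_H^{\epsilon}U^T\mid U\in\SLR\}$ is by construction a single orbit of the congruence action $U\cdot M = U M U^T$, so $\SLR$ acts transitively and $FL_3^H\cong\SLR/K$, where $K$ is the stabiliser of the base point $P_H^{\epsilon}$. The heart of the matter is to show $K=\Uone$. First I would record the elementary identity $(P_H^{\epsilon})^2=I$, a direct $3\times 3$ computation using $\epsilon^6=1$ and $H^2=1$. Using this together with the group automorphism $\sigma_H(g)=P_H^{\epsilon}(g^T)^{-1}P_H^{\epsilon}$ of \eqref{eq:sigmaonG}, a short manipulation shows that the stabiliser condition is exactly the $\sigma_H$-fixed-point condition: both $U P_H^{\epsilon}U^T=P_H^{\epsilon}$ and $\sigma_H(U)=U$ reduce to $U^T=P_H^{\epsilon}U^{-1}P_H^{\epsilon}$, and indeed substituting this gives $U P_H^{\epsilon}U^T = U(P_H^{\epsilon})^2U^{-1}P_H^{\epsilon}=P_H^{\epsilon}$. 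Hence
\[
K=\{U\in\SLR\mid \sigma_H(U)=U\}=\Fi(\SLR,\sigma_H).
\]

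It then remains to compute this fixed group. On the Lie-algebra level the differential of $\sigma_H$ is the order-$6$ automorphism of \eqref{defsigma}, whose eigenvalue-$1$ eigenspace is $\mathfrak{g}_0=\{\di(a,-a,0)\mid a\in\C\}$ as listed in Section \ref{subsc:realform}. Intersecting with the real form $\slr$ by imposing $\tau$-invariance, with $\tau(X)=\Ad(P_H)\bar X$ from \eqref{eq:tau}, forces $a$ to be purely imaginary, so the fixed subalgebra is $\{\di(it,-it,0)\mid t\in\R\}$; this is precisely $\operatorname{Lie}\Uone$ since $\exp\di(it,-it,0)=\di(e^{it},e^{-it},1)$. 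A direct check that $\sigma_H(\di(a,a^{-1},1))=\di(a,a^{-1},1)$ for $a\in S^1$ shows $\Uone\subset K$, and as $\Uone\cong S^1$ is connected and already exhausts the fixed subalgebra it is the identity component of $K$. Finally, $\dim\SLR=8$ and $\dim\Uone=1$ yield $\dim FL_j^H=7$, and since all three spaces now carry the same transitive $\SLR$-action with the same isotropy $\Uone$, the assignments $g\cdot p_1\mapsto g\cdot p_3$ (and similarly for the other pairs) are well defined and furnish the asserted $\SLR$-equivariant diffeomorphisms $FL_1^H\cong FL_2^H\cong FL_3^H$.

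I expect the delicate point to be the passage from this infinitesimal computation to the global equality $K=\Uone$: a priori $\Fi(\SLR,\sigma_H)$ could acquire extra connected components, so one must rule out elements of $\SLR$ outside $\Uone$ that fix $P_H^{\epsilon}$ under congruence. I would settle this by a direct matrix argument: splitting the stabiliser condition into its symmetric and skew-symmetric parts $USU^T=S$ and $UAU^T=A$, observing that congruence-invariance of the rank-$2$ skew form $A$ forces $U$ to preserve its radical $\R e_3$, and then checking that invariance of the nondegenerate symmetric form $S$ on the remaining plane leaves only the diagonal circle $\Uone$. The earlier identifications of the $FL_1^H$ and $FL_2^H$ isotropy groups with $\Uone$ serve as a consistency check on this conclusion.
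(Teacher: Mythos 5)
Your proposal is correct and follows essentially the same route as the paper: transitivity plus identification of the isotropy subgroup at a convenient base point, invoking Proposition \ref{Prp:Grass} and Proposition \ref{Prp:Flags} for $j=1,2$ exactly as the paper's own proof does. The only difference is one of detail: the paper dismisses the $FL_3^H$ case with ``the stabilizer at $P_H^{\epsilon}$ is easily computed as $\Uone$,'' whereas you actually carry out this computation --- your observation that $(P_H^{\epsilon})^2=I$ makes $\mathrm{Stab}_{P_H^{\epsilon}}$ coincide with $\Fi(\SLR,\sigma_H)$, followed by the Lie-algebra identification of the identity component and the symmetric/skew-splitting argument ruling out extra components, is precisely the verification the paper leaves implicit here and then reuses in the proof of Corollary \ref{cor:3.4}.
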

\begin{proof}
The statements clearly follow from Proposition \ref{Prp:Grass},
 Proposition \ref{Prp:Flags} and  the definition of $FL_3^H$ in \eqref{eq:FL3},
where  the stabilizer at $P_H^{\epsilon}$ is easily computed as $\Uone$.
\end{proof}

%%%%%%%%%%%%%%%%%%

\begin{corollary}\label{cor:3.4}
 The homogeneous spaces $FL_j^H \;(j=1, 2, 3)$ are $6$-symmetric spaces.
Furthermore, they are naturally equivariantly diffeomorphic.
\end{corollary}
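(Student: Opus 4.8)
The plan is to deduce both assertions from the representation $FL_j^H = \SLR/\Uone$ already established in Theorem \ref{Thm:3.3}, combined with an explicit identification of the fixed-point group of $\sigma_H$ inside $\SLR$. For the $6$-symmetric property I must verify the defining condition in Definition \ref{def:k-symmetric} with $G = \SL$, $G^{\R} = \SLR$, the real-form involution $\tau$ of \eqref{eq:tau}, the order-$6$ automorphism $\sigma_H$ of \eqref{defsigma}, and isotropy $G_0^{\R} = \Uone$; for the equivariant diffeomorphism I will exploit that all three spaces carry the \emph{same} isotropy subgroup $\Uone$, so that the orbit maps intertwine the $\SLR$-actions.

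First I would determine $\Fi(\SLR,\sigma_H)$. Since $\sigma_H$ and $\tau$ commute (as recorded after \eqref{eq:tausigma}), $\sigma_H$ preserves $\SLR$, so $\Fi(\SLR,\sigma_H)$ is a closed subgroup whose Lie algebra is $\mathfrak{g}_0 \cap \slr$, where $\mathfrak{g}_0 = \{\di(a,-a,0) \mid a \in \C\}$ is the $\sigma_H$-fixed eigenspace listed in Appendix \ref{subsc:realform}. Imposing the reality constraint \eqref{eq:SLR} forces the $(2,2)$ entry $-a$ to equal $\bar a$, hence $a$ is purely imaginary, and the intersection is the one-dimensional real algebra $\{\di(i\theta,-i\theta,0) \mid \theta \in \R\}$. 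Its connected subgroup is exactly $\Uone = \{\di(a,a^{-1},1) \mid a \in S^1\}$, so $\Fi(\SLR,\sigma_H)^\circ = \Uone$, which gives the left inclusion in Definition \ref{def:k-symmetric}. For the right inclusion I would check that $\Uone$ is pointwise fixed: for $g = \di(a,a^{-1},1)$ with $|a|=1$ a direct computation gives $\sigma_H(g) = P_H^{\epsilon}(g^T)^{-1}P_H^{\epsilon} = \di(\epsilon^6 a,\,\epsilon^6 a^{-1},\,H^2)$, and since $\epsilon^6 = 1$ and $H^2 = 1$ this equals $g$. Thus $\Uone \subset \Fi(\SLR,\sigma_H)$, the sandwich $\Fi(\SLR,\sigma_H)^\circ \subset \Uone \subset \Fi(\SLR,\sigma_H)$ holds, and each $FL_j^H = \SLR/\Uone$ is a $6$-symmetric space relative to $\sigma_H$.

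For the second assertion I would invoke Theorem \ref{Thm:3.3}(2), which presents all three spaces as $\SLR/\Uone$ with one and the same isotropy group $\Uone$ (computed there at the chosen base points $p_j$). The $\SLR$-orbit maps $\Phi_j : \SLR/\Uone \to FL_j^H$, $g\Uone \mapsto g\cdot p_j$, are $\SLR$-equivariant diffeomorphisms by homogeneity, so the compositions $\Phi_k \circ \Phi_j^{-1} : FL_j^H \to FL_k^H$ are $\SLR$-equivariant diffeomorphisms. This is precisely the asserted natural equivariance.

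The routine but essential point is the fixed-point computation: everything reduces to verifying that the identity component of $\Fi(\SLR,\sigma_H)$ coincides with the prescribed isotropy $\Uone$ and that $\Uone$ is contained in $\Fi(\SLR,\sigma_H)$. The only mild subtlety is tracking the reality constraint \eqref{eq:SLR} when intersecting $\mathfrak{g}_0$ with $\slr$, which cuts the complex line $\mathfrak{g}_0$ down to a real line; once this is handled, both the $6$-symmetric property and the equivariant diffeomorphisms follow at once from the already-established homogeneous-space descriptions.
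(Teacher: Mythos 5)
Your proposal is correct and takes essentially the same route as the paper: both rest on identifying the fixed-point group of $\sigma_H$ in $\SLR$ with $\Uone$ and on the fact that all three spaces $FL_j^H$ carry the same stabilizer $\Uone$, so that the identity map of $\SLR$ descends to equivariant diffeomorphisms between them. The only immaterial structural difference is that the paper anchors the $6$-symmetric property at $FL_3^H$, where the stabilizer equation $X P_H^{\epsilon} X^T = P_H^{\epsilon}$ is literally the fixed-point equation for $\sigma_H$, and then transfers it to $FL_1^H$ and $FL_2^H$ through the diffeomorphisms, whereas you verify the sandwich condition of Definition \ref{def:k-symmetric} once (via the Lie-algebra computation $\mathfrak{g}_0 \cap \slr$ and the pointwise check on $\Uone$) and apply it to all three coset representations $\SLR/\Uone$ simultaneously.
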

\begin{proof}
 First we note that the group $G^{\R} = \SLR$ has the
 complexification $G = \rm{SL}_3 \C$ and is the fixed point set group of the
 real form involution $\tau$ given in \eqref{eq:tauonG0}.

 We show that $FL_3$ is a $6$-symmetric space.
 First note that the stabilizer
 \begin{equation}\label{eq:stabFL3}
 \textrm{Stab}_{P} = \{ X \in \SLR \;|\; X P \; X^T = P\}
 \end{equation}
 at the point $P$ of $FL_3$
 is $\Uone$.
We already know  that the order $6$-automorphism $\sigma_H$
 of $\SLR$ given
 in \eqref{eq:sigmaonG} and the real form involution $\tau$
 commute.
 Moreover,  a direct computation shows that the fixed point set of
 $\sigma_H$ in $\SLR$ is $\Uone$.
 Thus $\textrm{Stab}_{P}$ satisfies
 the condition in Definition  \ref{def:k-symmetric}.
Hence  $FL_3^H$ is $6$-symmetric space  in the sense of Definition
 \ref{def:k-symmetric}.
 Furthermore, since all the spaces $FL_j^H$ are $\SLR$-orbits with the same stabilizer, the identity homomorphism of $\SLR$ descends for any pair of homogeneous spaces
 $FL_j^H$ and $FL_m^H$
 to a diffeomorphism
 \[
  \phi_{jm}: FL_m^H \rightarrow FL_j^H
 \]
 such that for any $g \in \SLR$ and $p\in FL_m^H$  we have
\[
 \phi_{jm} (g.p) = g.\phi_{jm} (p).
\]
 As a consequence, also $FL_1^H$ and $FL_2^H$ are  $6$-symmetric spaces.
\end{proof}
%%%%%%%%%%%%%%%%%%%%%%%%
 We have seen that the homogeneous spaces $FL_j^H$ $(j=1, 2, 3)$ are
 $7$ dimensional $6$-symmetric spaces. In this section we define
 natural projections from $FL_j^H$ to several homogeneous spaces.

 First from $FL_1^H$, we have a projection to $\SLGr(3, \PC^3)$ given by
\[
 FL_1^H \ni (v, V) \longmapsto V \in  \SLGr(3, \PC^3).
\]
 It is easy to see that $\SLGr(3, \PC^3)$ is a symmetric space with
 the involution $\sigma_H^3$ defined in \eqref{eq:sigma3}.

 Next from $FL_2^H$, we have a projection to a full flag manifold:
\[
  FL_2^H \ni (w, W) \longmapsto W \in  Fl_2^H,
\]
 where $Fl_2^H$ is defined as
\[
 Fl_2^H = \{\mathcal W \mid \mbox{$\mathcal W$ is a regular para-complex flag in $\PC^3$} \}.
\]
 It is easy to see that $Fl_2^H$ is a $3$-symmetric space with
 the involution $\sigma_H^2$ stated  in \eqref{eq:sigma2}.

 Finally from $FL_3^H$, we have two projections. We first
 let $k \in \textrm{Stab}_{P_H^{\epsilon}}$ as in \eqref{eq:stabFL3} with
\[
P_H^{\epsilon}=
  \begin{pmatrix}
 0  & \epsilon^2 & 0 \\
 \epsilon^4 & 0 & 0 \\
0  & 0  & 1
 \end{pmatrix}, \quad \epsilon = e^{\pi i /3},
\]
  then a straightforward computation shows that
\begin{gather*}
 k P_H^{\epsilon} (P_H^{\epsilon})^T k^{-1} =   k P_H^{\epsilon} k^T (P_H^{\epsilon})^T    = P_H^{\epsilon}(P_H^{\epsilon})^T, \\
 k P_H^{\epsilon} (P_H^{\epsilon})^T P_H^{\epsilon}  k^{T} =  P_H^{\epsilon} (k^T)^{-1} (P_H^{\epsilon})^T k^{-1} P_H^{\epsilon}  =  P_H^{\epsilon}(P_H^{\epsilon})^T P_H^{\epsilon}.
\end{gather*}
 Therefore we have two projections
\begin{eqnarray*}
 &FL_3^H \ni U P_H^{\epsilon} U^T \longmapsto U (P_H^{\epsilon} (P_H^{\epsilon})^T) U^{-1} \in \widetilde {Fl_2^H}, \\
 &FL_3^H \ni U P_H^{\epsilon} U^T \longmapsto U (P_H^{\epsilon} (P_H^{\epsilon})^T P_H^{\epsilon}) U^T \in \widetilde \SLGr(3, \PC^3),
\end{eqnarray*}
 where $\widetilde {Fl_2^H}$ and $\widetilde \SLGr(3, \PC^3)$ are defined as
 \[
\widetilde {Fl_2^H} = \{U (P_H^{\epsilon} (P_H^{\epsilon})^T) U^{-1}\;|\; U \in \SLR\}, \quad
\widetilde \SLGr(3, \PC^3) = \{U (P_H^{\epsilon} (P_H^{\epsilon})^T P_H^{\epsilon}) U^{T}\;|\; U \in \SLR\}.
 \]
 Note that it is easy to compute
 \[
  P_H^{\epsilon} (P_H^{\epsilon})^T =
\begin{pmatrix}
 \epsilon^4 & 0 & 0 \\
 0 & \epsilon^2 & 0 \\
 0 & 0 & 1
\end{pmatrix}, \quad
  P_H^{\epsilon} (P_H^{\epsilon})^T P_H^{\epsilon} =P_H
\begin{pmatrix}
 0 & 1 & 0 \\
 1 & 0 & 0 \\
 0 & 0 & -H
\end{pmatrix},
 \]
 and the stabilizer in $\SLR$ at $P_H^{\epsilon} (P_H^{\epsilon})^T$ of $\widetilde{Fl_2^H}$ and
 the stabilizer in $\SLR$ at $P_H^{\epsilon} (P_H^{\epsilon})^T P_H^{\epsilon} $ of $\widetilde{\SLGr}(3, \PC^3)$ are
\[
\textrm{Stab}_{P_H^{\epsilon}(P_H^{\epsilon})^T} = \textrm{D}_3, \quad
 \textrm{Stab}_{P_H^{\epsilon}(P_H^{\epsilon})^TP_H^{\epsilon}} = \SO,
\]
 where
\[
 \textrm{D}_3 =
\{ \di (a_1, a_2, a_3) \in \SLR \},
\]
and where $\textrm{Stab}_{PP^TP}$ is exactly the same group as the stabilizer
of  $\SLGr(3, \PC^3)$. Thus  $\SLGr(3, \PC^3)$ and $\widetilde{\SLGr}(3, \PC^3)$ are naturally
equivariantly diffeomorphic. An analogous argument applies to  $Fl_2^H$ and  $\widetilde{Fl_2^H}$. Now  the stabilizer of $\widetilde{Fl_2^H}$ is determined by  the matrix characterizing $\sigma_H^2$, whence $\widetilde{Fl_2^H}$ (and thus $Fl_2^H$) is the $3$-symmetric space associated with $\sigma_H^2$. Similarly, $\SLGr(3, \PC^3)$ (and thus $\widetilde{\SLGr}(3, \PC))$
is the symmetric space associated with $\sigma_H^{3}$.
\\[0.5cm]

\textbf{Data Availability} Data sharing not applicable to this article as no datasets were generated or analysed during the current study.

\textbf{Conflict of interest} The authors state that there is no conflict of interest.
%%%%%%%%%%%%%%%%%%%%%%%%%%

%\bibliographystyle{plain}
%\bibliography{geometry,affine_geometry}

\end{document}